\theoremstyle{plain}
\newtheorem{theorem}{Theorem}[section]
\newtheorem{proposition}[theorem]{Proposition}
\newtheorem{corollary}[theorem]{Corollary}
\newtheorem{lemma}[theorem]{Lemma}
\theoremstyle{definition}
\newtheorem{definition}[theorem]{Definition}
\newtheorem{example}[theorem]{Example}
\newtheorem{remark}[theorem]{Remark}
\theoremstyle{remark}
\numberwithin{equation}{section}
\newcommand{\N}{\mathbb N}
\newcommand{\Z}{\mathbb Z}
\newcommand{\Q}{\mathbb Q}
\newcommand{\R}{\mathbb R}
\newcommand{\C}{\mathbb C}
\newcommand{\A}{\mathbb{A}}
\newcommand{\p}{\mathfrak{p}}
\newcommand{\ai}{\mathfrak{a}}
\newcommand{\Pl}{\mathrm{Pl}}
\newcommand{\FS}{\mathrm{FS}}
\newcommand{\GL}{\mathrm{GL}}
\newcommand{\PGL}{\mathrm{PGL}}
\newcommand{\SL}{\mathrm{SL}}
\newcommand{\SO}{\mathrm{SO}}
\newcommand{\Vol}{\operatorname{Vol}}
\newcommand{\sign}{\operatorname{sign}}
\newcommand{\ba}{\backslash}
\title [Eigenvalues of Hecke and Casimir operators for Hilbert Maass forms] 
{Joint distribution of eigenvalues of Hecke and Casimir operators  for Hilbert Maass forms}
\author{Roberto J. Miatello and Angel Villanueva}
\address{CIEM--FaMAF (CONICET), Universidad Nacional of C\'ordoba, Medina Allende s/n, Ciudad Universitaria, 5000 C\'ordoba, Argentina.}
\email{miatello@famaf.unc.edu.ar}
\email{villanueva@famaf.unc.edu.ar}
\subjclass[2010]{Math. Subject Classification (2010): 11F03, 11F30, 11F60.}
\keywords{automorphic forms, Maass forms, Hecke eigenvalues, equidistribution.}
\thanks{This research was supported by grants from CONICET and FONCyT}
\date{\today}
\begin{document}

\begin{abstract}
	Let $F$ be a totally real number field, $\mathcal{O}_{F}$ the ring of integers,
	$\mathfrak a$ and $\mathfrak I$  integral ideals and let $\chi$ a character of $\A_F^\times/F^\times$.
	For each prime ideal  $\mathfrak{p}$ in $\mathcal{O}_{F}$, $\mathfrak{p}\nmid \mathfrak{I}$ let $T_{\mathfrak{p}}$ be the Hecke operator acting on  the space of Maass cusp forms  on
	 $L^2(\GL_{2}(F) \ba \GL_{2}(\A_F))$.
	
	In this  paper we  investigate the   distribution of joint eigenvalues of the Hecke operators $T_{\mathfrak{p}}$ 
	and  of the  Casimir operators $C_{j}$ in each archimedean component of $F$, for $1\le j \le d$. 
	Summarily, we prove that given a family of expanding compact subsets $\Omega_{t}$ of $\R^{d}$ as $t \rightarrow \infty$, and an interval $I_{\mathfrak{p}} \subseteq [-2,2]$, 
	then, if  $\mathfrak{p} \nmid \mathfrak{I}$ is a square in the narrow class group of $F$,  there are infinitely many  automorphic forms having eigenvalues of $T_{\mathfrak{p}}$ in $I_{\mathfrak{p}}$,  distributed on $I_{\mathfrak{p}}$ according to  a polynomial multiple of the Sato-Tate measure  and  having  their Casimir eigenvalues in the region $\Omega_{t}$, distributed according to the Plancherel measure. 
	
	Our results extend results of Serre \cite{Se97}, Knightly--Li (\cite{KL08}, \cite{KL13}) and  Bruggeman--Miatello \cite{BM13}.


\end{abstract} 

\maketitle

\tableofcontents

\section{Introduction}

Let $k$ be even and let $f \in S_{k}(\Gamma_{0}(N))$ be a  holomorphic  cuspidal modular form that is a Hecke  eigenfunction. For each  prime $p\nmid N$ let  $\lambda_{p}(f)$ be the   normalized Hecke eigenvalue  defined by
\begin{equation}\notag
p^{k-1/2}T_{p}f=\lambda_{p}(f)f.
\end{equation}

The  Ramanujan--Petersson conjecture,  proved by Deligne \cite{De73},  implies that $|\lambda_{p}(f)|\leq 2$ for any $f \in S_{k}(\Gamma_{0}(N))$. 
Serre studied the asymptotic  distribution of the Hecke eigenvalues $\lambda_{p}(f)$, when $f$ is fixed and   $p$ varies  and conjectured that for any   $f \in S_{k}(\Gamma_{0}(N))$, as $x \rightarrow \infty$, 
the $\lambda_{p}(f)$ for $p \leq x$ are equidistributed in $[-2,2]$  with respect to the  Sato--Tate measure 
\begin{equation}\label{eq:SatoTate}
d\mu _{\infty} (x)=\begin{array}{ll}
 \Bigg\lbrace \! 
    \begin{array}{ll}
      \frac{1}{\pi}\sqrt{1- \frac{x^{2}}{4}}dx \; \textrm{ if } \; x\in [-2,2],  \\ 0  \; \textrm{ other $x$}.
    \end{array}
\end{array}
\end{equation}
This conjecture was proved by Barnet--Lamb, Geraghty, Harris and Taylor in \cite{BGHT11}.

Another point of view is the so called vertical problem  of Sato--\!Tate. Namely, fixed a prime $p$, when the level  $N$ and/or the weight $k$ vary, the eigenvalues  $\lambda_{p}(f)$ for $f \in  S_{k}(\Gamma_0(N))$, follow distribution laws with respect to the Sato--\!Tate measure. In this direction Serre proved the following

\begin{theorem} \cite{Se97} Let $N,k$ be  positive integers such that $k$ is even, $N+k \rightarrow \infty$ and $p$ is prime,  $p \nmid N$. Then the  
normalized Hecke eigenvalues $\lambda_p(f_{k,N})$ are equidistributed in the interval $\Omega=[-2,2]$  with respect to the  $p$-adic  Sato--\!Tate measure:
\begin{equation}\notag
\mu_{p}:= \frac{p+1}{\pi} . \frac{(1-x^{2}/4)^{1/2}}{(p^{1/2}+p^{-1/2})^{2}-x^{2}}dx.
\end{equation} 
\end{theorem}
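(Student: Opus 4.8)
The plan is to prove the asserted equidistribution by the method of moments, using as test functions the Chebyshev polynomials of the second kind $U_n$, normalised so that $U_n(2\cos\theta)=\sin\bigl((n+1)\theta\bigr)/\sin\theta$. Write $\nu_{N,k}$ for the probability measure $\bigl(\dim S_k(\Gamma_0(N))\bigr)^{-1}\sum_f\delta_{\lambda_p(f_{k,N})}$, the sum being over an orthogonal basis of $T_p$-eigenforms of $S_k(\Gamma_0(N))$; by Deligne's bound every $\nu_{N,k}$ is supported on $[-2,2]$, so the family is tight, and since the compactly supported measure $\mu_p$ is determined by its moments it suffices, in order to conclude $\nu_{N,k}\to\mu_p$ weakly as $N+k\to\infty$, to prove that for each fixed $n\ge 0$
\[
\frac{1}{\dim S_k(\Gamma_0(N))}\sum_f U_n\bigl(\lambda_p(f_{k,N})\bigr)\;\longrightarrow\;\int_{-2}^{2}U_n\,d\mu_p\quad\text{as }N+k\to\infty ,
\]
the $U_j$ with $j\le n$ being a basis of the polynomials of degree $\le n$.

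I would first evaluate the right-hand side in closed form. The substitution $x=2\cos\theta$ turns $\mu_p$ into
\[
d\mu_p=\frac{2(p+1)}{\pi p}\cdot\frac{\sin^2\theta}{\bigl|1-p^{-1}e^{2i\theta}\bigr|^{2}}\,d\theta ,
\]
and expanding the Poisson-type kernel $\bigl|1-p^{-1}e^{2i\theta}\bigr|^{-2}=(1-p^{-2})^{-1}\sum_{m\in\Z}p^{-|m|}e^{2im\theta}$ and then integrating against $\sin\theta\,\sin\bigl((n+1)\theta\bigr)=\tfrac12\bigl(\cos n\theta-\cos(n+2)\theta\bigr)$ one obtains
\[
\int_{-2}^{2}U_n\,d\mu_p=\begin{cases}p^{-n/2}, & n\text{ even},\\[2pt]0,& n\text{ odd};\end{cases}
\]
in particular $\int d\mu_p=1$, so $\mu_p$ is a probability measure, as it must be.

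For the left-hand side, the Hecke recursion $a_{p^{m+1}}(f)=a_p(f)a_{p^m}(f)-p^{k-1}a_{p^{m-1}}(f)$ (valid at $p\nmid N$), combined with the normalisation $a_p(f)=p^{(k-1)/2}\lambda_p(f)$ of the introduction, shows by induction that $a_{p^n}(f)=p^{n(k-1)/2}U_n\bigl(\lambda_p(f)\bigr)$ on a Hecke eigenform, so that summing over a basis gives $\sum_f U_n\bigl(\lambda_p(f)\bigr)=p^{-n(k-1)/2}\,\tr\bigl(T_{p^n}\mid S_k(\Gamma_0(N))\bigr)$. Now I would apply the Eichler--Selberg trace formula to $\tr\bigl(T_{p^n}\mid S_k(\Gamma_0(N))\bigr)$ (legitimate since $p\nmid N$; for weight $k=2$ there is a harmless additional term coming from Eisenstein series, of size $O_{p,n}\bigl(\nu_\infty(N)\bigr)$ with $\nu_\infty(N)$ the number of cusps). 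Its identity term is $\tfrac{k-1}{12}\psi(N)\,p^{n(k-2)/2}$ when $n$ is even and $0$ otherwise, where $\psi(N)=[\SL_2(\Z):\Gamma_0(N)]$. Dividing by $p^{n(k-1)/2}$ and by $\dim S_k(\Gamma_0(N))$, and using $\dim S_k(\Gamma_0(N))=\tfrac{k-1}{12}\psi(N)\bigl(1+o(1)\bigr)$ as $N+k\to\infty$ (the elliptic-point and cusp corrections being of strictly lower order), the identity term contributes exactly $p^{-n/2}(1+o(1))$ for even $n$ and $0$ for odd $n$, matching the desired limit. It then remains to show that the elliptic, hyperbolic and parabolic contributions are $o\bigl(p^{n(k-1)/2}(k-1)\psi(N)\bigr)$.

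The estimate doing the real work, and the one that makes the argument uniform across the whole range $N+k\to\infty$, concerns the elliptic weight factor: if $t^2<4p^n$ and $\rho,\bar\rho$ are the roots of $X^2-tX+p^n$ — complex conjugates with $\rho=p^{n/2}e^{i\phi}$, $\cos\phi=t/(2p^{n/2})$ — then
\[
\left|\frac{\rho^{k-1}-\bar\rho^{k-1}}{\rho-\bar\rho}\right|=p^{n(k-2)/2}\,\frac{\bigl|\sin\bigl((k-1)\phi\bigr)\bigr|}{|\sin\phi|}\le\frac{2\,p^{n(k-1)/2}}{\sqrt{4p^n-t^2}}\le 2\,p^{n(k-1)/2},
\]
because $4p^n-t^2$ is a positive integer; crucially this bound has no $k$-dependence once the normalising factor $p^{n(k-1)/2}$ is removed. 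Feeding it into the elliptic sum, together with the classical bounds $\sum_f h_w\!\bigl((t^2-4p^n)/f^2\bigr)=O_{p,n}(1)$ for the weighted class numbers and $O_{p,n,\varepsilon}(N^{1/2+\varepsilon})$ for the local factor at $N$ (mild because $\gcd(p^n,N)=1$), and using that only $O_{p,n}(1)$ values of $t$ occur, gives an elliptic contribution $O_{p,n,\varepsilon}\bigl(p^{n(k-1)/2}N^{1/2+\varepsilon}\bigr)$; after dividing by $p^{n(k-1)/2}(k-1)\psi(N)$ and using $\psi(N)\ge N$ this is $O_{p,n,\varepsilon}\bigl(N^{\varepsilon-1/2}/(k-1)\bigr)\to 0$ whether it is $k$ or $N$ (or both) that grows. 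The hyperbolic term $\tfrac12\sum_{j}\min(p^{j},p^{n-j})^{k-1}\cdot(\text{level factor})$ is no worse: for $j\ne n/2$ the divisor power is $\le p^{(n-1)(k-1)/2}$ and the level factor is $O_{p,n,\varepsilon}(N^{\varepsilon})$ (the congruence $x\equiv p^{j}$ or $x\equiv p^{n-j}$ mod $N$ being essentially unrestrictive), while for $j=n/2$ the divisor power is $p^{n(k-1)/2}$ but the level factor is only $O_{p,n,\varepsilon}(N^{1/2+\varepsilon})$; in all cases this is $o\bigl(p^{n(k-1)/2}(k-1)\psi(N)\bigr)$, and so is the polynomially bounded parabolic term. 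Hence every moment of $\nu_{N,k}$ converges to the corresponding moment of $\mu_p$, and the theorem follows. The main obstacle, as the discussion shows, is exactly this uniformity: one may not assume that only $k$ or only $N$ tends to infinity, so each error term must be beaten either by the factor $k-1$ hidden in $\dim S_k(\Gamma_0(N))\asymp(k-1)\psi(N)$ (the regime $k\to\infty$) or by a power of $\psi(N)\asymp N$ against the $N^{1/2+\varepsilon}$ produced by the class-number and level data (the regime $N\to\infty$) — and it is the $k$-free bound on the elliptic weight factor that lets a single argument cover both regimes at once.
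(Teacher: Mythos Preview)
The paper does not prove this theorem; it is stated in the introduction as background and attributed to Serre \cite{Se97} with no argument supplied. So there is no ``paper's own proof'' to compare against.

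That said, your proposal is essentially a faithful reconstruction of Serre's original argument: reduce to moments against the Chebyshev basis $U_n$, compute $\int U_n\,d\mu_p$ via the Poisson kernel (your evaluation $p^{-n/2}$ for even $n$, $0$ for odd $n$ is correct), convert $\sum_f U_n(\lambda_p(f))$ into $p^{-n(k-1)/2}\tr(T_{p^n}\mid S_k(\Gamma_0(N)))$ by the Hecke recursion, and then read off the Eichler--Selberg trace formula, showing that the identity term produces exactly the limiting moment while the remaining terms are $o\bigl((k-1)\psi(N)\bigr)$. The key uniformity observation --- that the elliptic weight factor satisfies $\bigl|(\rho^{k-1}-\bar\rho^{k-1})/(\rho-\bar\rho)\bigr|\le 2p^{n(k-1)/2}$ with no residual $k$-dependence because $4p^n-t^2\ge 1$ --- is the heart of Serre's proof and you have it right.

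Two small points. First, in your discussion of the hyperbolic term you treat the case $j=n/2$, but in the usual formulation this contribution \emph{is} the identity term you already extracted; the genuine hyperbolic sum runs over $d\mid p^n$ with $d^2\ne p^n$, so $\min(d,p^n/d)\le p^{\lfloor (n-1)/2\rfloor}$ and your bound $p^{(n-1)(k-1)/2}$ applies throughout. Second, the level factors in both the elliptic and hyperbolic terms require a bit more care than ``$O(N^{1/2+\varepsilon})$'' to justify cleanly (one has to control sums of the type $\sum_{c\mid N}\phi(\gcd(c,N/c))$ under divisibility constraints), but the bounds you state are of the right order and the conclusion is unaffected. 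The argument is sound.
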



The distribution of the Hecke eigenvalues has been investigated  by several authors,  generalizing or  supplementing   Serre's results. 
For instance, in \cite{KL08},  Knightly--Li give a  result  on weighted equidistribution for holomorphic  forms over $F$ a totally real number field,  using a  polynomial multiple of the Sato--\!Tate measure and
further,  in \cite{KL13}, they extended the result 
to the case of  Maass forms over $\Q$.
For totally real number fields $F$, 
in \cite{BM13} a result of joint distribution of eigenvalues of the  Hecke operators $T_{\p^{2}}$ and the Casimir operators $C_{j}$ in each archimedean component of $F$, is proved  using a modification of the Sato--\!Tate measure. A recent far reaching generalization is due to Finis-Matz (\cite{FM19}) who use the trace formula to prove distribution results to any reductive group split over $\Q$ (see also \cite{MT15}  and \cite{KST20}).

In this paper we extend the distribution results in   \cite{BM13} for $F$ totally real, to  
Hecke operators $T_{\p}$ for $\p$ 
a prime 
that is a square in the  narrow class group, using a polynomial multiple of the  Sato--\!Tate measure. 

More precisely, given a family of compact boxes $\Omega_{t} =\prod_{j=1}^d \Omega_{t,j} $ of $\R^{d}$  satisfying some mild conditions (see \eqref{eq: Omega general}), where  $\Omega_{t}$ expands  in at least one component as $t \rightarrow \infty$,  and an interval $I_{\mathfrak{p}} \subseteq [-2,2]$ for $\mathfrak{p} \nmid \mathfrak{I}$ a prime ideal that is a square in the narrow class group, for any $r \in \ai^{-1}\mathfrak d^{-1}$, $\mathfrak d$ the inverse different of $F$, we prove the estimate
$$\sum_{f \in \mathcal B_{\chi,q}: \lambda(f)\in \Omega_t \atop \lambda_{\mathfrak{p}}(f) \in I_{\mathfrak{p}}}  |c^{\ai,r}(f)|^{2} =  \frac{2^d\sqrt{D_{F}}}{\pi^{d}h_F} \Phi_{\ai,r}(I_{\mathfrak{p}})\textrm{Pl}(\Omega_{t}) + \textrm o(V_{1}(\Omega_{t}))$$ 
where 
$\mathcal B_{\chi,q}$ is  an orthonormal basis of the space 
$L^{2,\textrm {disc}}(\GL_2(F) \ba \GL_2(\A_F)/K_0(\mathfrak I))_{\chi, q}$ (satisfying the conditions in  Definition~\ref{def: BFSchiq}), 
$c^{\ai,r}(f)$ is the  $(\ai,r)$-Fourier  coefficient of  $f$, $\lambda(f)=(\lambda_j(f))$ where $\lambda_j(f)$ is the eigenvalue of $C_j$ and, finally, $\lambda_\p (f)$ is the eigenvalue of $T_\p$ on $f$. Furthermore,  $\Phi_{\ai,r}(x)$ is a  polynomial multiple of the Sato-Tate measure (see \eqref{eq: phi}) that coincides with the Sato-Tate measure for any $r$  such that  ${\p}\nmid r\ai \mathfrak{d}$,
Pl denotes the  Plancherel measure and $V_{1}$ is a measure on $\R^d$ such that $V_{1}=\textrm O(\textrm{Pl})$.

This  implies that
$$\lim_{t\rightarrow \infty} (\textrm{Pl}(\Omega_{t}))^{-1} \sum_{f \in \mathcal B_{\chi,q}: \lambda(f)\in \Omega_t \atop \lambda_{\mathfrak{p}}(f) \in I_{\mathfrak{p}}}  |c^{\ai,r}(f)|^{2} =   \frac{2^d\sqrt{D_{F}}}{\pi^{d}h_F} \Phi_{\ai,r}(I_{\mathfrak{p}}). $$
In particular, this says that there are infinitely many  automorphic eigenforms with $T_{\mathfrak{p}}$ eigenvalues    distributed according to  
the measure $\Phi_{\ai,r}$ of   $I_{\mathfrak{p}}$, and   with  Casimir eigenvalues in the given region $\Omega_{t}$, distributed according to the Plancherel measure of  the region. 

 To prove the main results (Theorem~\ref{thm:principal 1} and Theorem~\ref{thm: principal}), we use  the Kuznetsov sum formula and, as a main tool, an asymptotic formula similar to one proved in \cite{BM10}. In the proofs, we use results in \cite{Ve04}, \cite{BM10} and \cite{BMP03}.

As an application, we obtain   results on the distribution of eigenvalues of holomorphic Hilbert modular forms (Thm \ref {thm: caso holomorfo}) and  on  weighted equidistribution of  Hecke eigenvalues (Thm \ref{thm: equidistribucion con peso}).

\begin{remark} Although we restrict this paper to $F$ totally real, part of the argument works for any field $F$, using the Kuznetsov formula in \cite{Ma13}. It seems still non trivial to generalize the estimate $o(V_1(\Omega_t))$ of the remainder in \eqref{eq:asymptotic},  proved in \cite{BM10} in the totally real case.
\end{remark}
\subsection*{Acknowledgments}
The authors wish to thank R. Bruggeman, G. Harcos and A. Knightly for  useful comments on a first version of this paper.


\section{Preliminaries} \label{sec:preliminaries}

In this section we introduce some basic notions and notations that  will be needed throughout this paper.  

Let  $F$ be a  totally real number field, $[F : \Q]=d$ and let $\mathcal{O}_{F}$ be the ring of integers of $F$.
Let $\sigma^{(j)}: F \rightarrow \R$ ($j = 1, \ldots, d$) be the field embeddings from  $F$ into $\R$ and denote by
$ \sigma: \GL_2(F) \rightarrow  \GL_2(F_\infty) \cong \GL_2(\R)^d$  the canonical embedding 
given by
\begin{equation}\label{eq:Fembedding} 
\sigma(\left[\begin{smallmatrix}a&b\\c&d\end{smallmatrix}\right]) = 
(\sigma^{(1)}\left[\begin{smallmatrix}a&b\\c&d\end{smallmatrix}\right],\ldots,\sigma^{(d)}\left[\begin{smallmatrix}a&b\\c&d\end{smallmatrix}\right]).\end{equation}

Denote, for $x, \theta \in \mathbb R^d$, $t,  y \in (\mathbb R^{\times})^d$, if $x=(x_j)$, $t=(t_j)$, $\theta = (\theta_j)$, $y=(y_j)$:
\begin{align}
n(x) &= \left( \left( \begin{matrix}
1 & x_1\\
0 & 1
\end{matrix}\right), \ldots , \left( \begin{matrix}
1 & x_d\\
0 & 1
\end{matrix} \right)\right)\\
z(t) & =\left( \left( \begin{matrix}
t_1 & 0\\ 
0 & t_1
\end{matrix} \right),\ldots,\left( \begin{matrix}
t_d & 0\\ 
0 & t_d
\end{matrix} \right)\right) \\
a(y)&=\left( \left( \begin{matrix}
y_{1} & 0\\
0 & 1
\end{matrix} \right),\ldots, \left( \begin{matrix}
y_{d} & 0\\
0 & 1
 \end{matrix} \right) \right), 
\\
k(\theta) &= \left( \left( \begin{matrix}
\cos (\theta_{1}) & \sin (\theta_{1})\\
-\sin (\theta_{1}) & \cos (\theta_{1})
\end{matrix} \right),\ldots, \left( \begin{matrix}
\cos (\theta_{d}) & \sin (\theta_{d})\\
-\sin (\theta_{d}) & \cos (\theta_{d})
\end{matrix} \right) \right) .
\end{align}

We will make use of the following subgroups of $\GL_2(\R)^{d}$.
Let $N= \lbrace n(x) : x\in \mathbb{R}^{d} \rbrace$, $A= \lbrace a(y) : y \in (\R ^{\times})^{d} \rbrace$, $K_\infty = \lbrace k(\theta) : \theta \in \mathbb{R}^{d} \rbrace   \cong \textrm{SO}(2)^d$,
 $Z_\infty = \lbrace z(t) : t \in (\R^{\times})^d\rbrace$, the center  of $\GL_2(F_\infty) \cong \GL_2(\R)^d$. 
Then, one can write every $g \in \GL_2 (\R) ^d$   uniquely as 
\begin{equation}
\label{eq:Iwasawa} g = znak, 
\end{equation}
where $z \in Z_\infty, n \in N, a \in A$ and $k \in K_\infty$, hence $\GL(2,\R)^d \simeq NAK_\infty Z_\infty$.

\

The  function $S: \R^d \rightarrow \R$ given by $S(x) = \sum_{j} x_{j}$ extends the trace function $\textrm{Tr}_{F/\Q} : F \rightarrow \Q$. 
The   inverse different ideal of $F$ is the fractional ideal  \begin{equation}
\mathfrak{d}^{-1}=\lbrace x \in F \; : \; S(x\xi) \in \Z \; \textrm{for every} \; \xi \in \mathcal{O}_{F} \rbrace.
\end{equation}

Given  integral ideals $\mathfrak I$ and $\ai$ of $\mathcal O_F$, 
consider the congruence subgroup of $\GL_2(F_\infty)$:
\begin{align}
\Gamma_0(\mathfrak I,\ai) &= \left\{  \sigma(\left[\begin{smallmatrix}a&b\\c&d\end{smallmatrix}\right])
: \left[\begin{smallmatrix}a&b\\c&d\end{smallmatrix}\right] \in \GL_2(F), ad-bc \in \mathcal{O}_F^{\times}, \,\,\,
a,d \in \mathcal O_F,\,\,\, b\in \ai,\,\, c \in \ai^{-1} \mathfrak I \right\}.
\end{align}
When  $\ai = \mathcal O_F$, then $\Gamma_0(\mathfrak I,\ai) = \Gamma_0(\mathfrak I).$
 We will  also make  use of  the subgroup
$\Gamma(\mathfrak{I}, \mathfrak{a})_{N} = N(F) \cap \Gamma_0 (\mathfrak{I}, \mathfrak{a})$ of $N(F)$.

Let  $\mathbb{A}_{F}$, $\mathbb{A}_{F,f}$ and $\mathbb{A}^{\times}_{F}$ be the ring of adeles, the ring  of finite adeles and the group of ideles of $F$. 
We have canonical embeddings   of $F$ 
into $\mathbb{A}_{F}$:
\begin{equation*}
i: F \rightarrow \mathbb{A}_{F}, \quad i_{\infty}: F \rightarrow \mathbb{A}_{F_\infty}, \quad i_{f}: F \rightarrow \mathbb{A}_{F,f}\,.
\end{equation*}
Denote by $\mathcal{C}_F$ and  $\mathcal{C}_{F}^{+}$ the class group and  narrow class group of $F$,  and  by $h_{F}$ and $h_{F}^+$  the class number and narrow class number of $F$ respectively. 

To each fractional ideal $\mathfrak{a}$ of $F$ we  associate an idele $\pi_{\mathfrak{a}} \in \mathbb{A}_{F}^{\times}$ which at each  non-archimedean place has the same valuation as $\mathfrak{a}$. Such an idele  is unique up to multiplication by units. Let $\mathfrak a_1,\ldots, \mathfrak a_{h_F}$ be representatives of the ideal classes in $\mathcal C_F$ and let $\pi_{\mathfrak a_1},\ldots, \pi_{\mathfrak a_{h_F}}$ be corresponding ideles.



Let $v$ be a place of $F$,  let $F_v$ be  the completion of $F$ at $v$ and $\mathcal{O}_v$ the integral subring of $F_v$.  
We set $F_\infty := \prod_{v|\infty} F_v \cong \R^d$. 
  
Given  $\mathfrak{I}$ an  integral  ideal and $\ai$ a  fractional ideal, for each finite place  $v$ of $F$ we
consider the  open compact subgroup of $\GL_2(\A_{F, f}) $
\begin{equation}
K_{0,v}(\mathfrak{I}_{v}, \mathfrak{a}_{v}) = \left\{ \left( \begin{matrix}
a & b\\
c & d
\end{matrix}\right) \in \GL_2(F_v) \; : \; ad-bc \in \mathcal{O}_{v}^{\times} , \; a,d \in \mathcal{O}_v, c\in \mathfrak{a}_{v}^{-1} \mathfrak{I}_{v}  , \; b \in \mathfrak{a}_{v} \right\}.
\end{equation}

Let $K_{0}(\mathfrak{I}, \ai) := \prod_{v} K_{0,v}(\mathfrak{I}_{v}, \mathfrak{a}_{v})$. In case  $\ai = \mathcal O_F$, we write $K_{0}(\mathfrak{I})$ in place of  $K_{0}(\mathfrak{I}, \mathcal O_F)$. 

There is the following relation between  $\Gamma_{0}(\mathfrak{I}, \mathfrak{a})$ and $K_0(\mathfrak{I}, \ai)$
\begin{equation} 
i_f(\Gamma_{0}(\mathfrak{I}, \mathfrak{a})) =   K_0(\mathfrak{I}, \ai) \cap i_f(\GL _2 (F)).
\end{equation}

We now recall strong approximation for number fields with 
class number   $h_F$.  
 
\begin{theorem} 
Let $\mathfrak{a}_{1}, \ldots, \mathfrak{a}_{h_F}$ be integral representatives  of the 
class group $\mathcal{C}_{F}$, with 
corresponding  ideles  $\pi_{\mathfrak{a}_{1}}, \ldots, \pi_{\mathfrak{a}_{h_F}}$. Then one has the following decompositions 
\begin{align}
&\mathbb{A}_F = i(F) + (F_{\infty}  \prod_{\p < \infty}\mathcal{O}_\p ), \quad\quad
\mathbb{A}^{\times}_F= \bigsqcup _{i=1}^{h_F}\,  i(F^{\times}) \pi_{\mathfrak{a}_{i}} \big(F^{\times}_{\infty}  \prod_{\p<\infty} \mathcal{O}^{\times}_{\p}\big),
\\& \GL_2 (\mathbb{A}_F) = \bigsqcup _{i=1}^{h_F} i(\GL _2 (F))\left(\begin{matrix}
\pi_{\mathfrak{a}_{i}} & 0 \\
0 & 1
\end{matrix}\right)  \GL_2 (F_\infty)K_f. \label{strongapprox}
\end{align}
 where $K_f$ is any compact open subgroup of $\GL_2(A_F)_f$ such that $\det$ is onto  $\mathcal O_F^\times$. From now on we use $K_f$= $K_0(\mathfrak I)$.
\end{theorem}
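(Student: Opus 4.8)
The plan is to establish the three displayed decompositions separately; the first two are elementary and the third is a form of strong approximation for $\GL_2$ that I would deduce from strong approximation for $\SL_2$. For the additive one I would first reduce $\A_F=i(F)+(F_\infty\prod_{\p<\infty}\mathcal{O}_\p)$ to the non-archimedean statement $\A_{F,f}=i_f(F)+\prod_{\p<\infty}\mathcal{O}_\p$, since the archimedean component of an adele is absorbed into $F_\infty$ once a principal adele is subtracted. This non-archimedean statement is the surjectivity of the canonical map $F/\mathcal{O}_F\to\bigoplus_{\p<\infty}F_\p/\mathcal{O}_\p$, a standard fact about the Dedekind ring $\mathcal{O}_F$: if $x_f=(x_\p)$ has $x_\p\in\mathcal{O}_\p$ outside a finite set $S$, pick an integral ideal $\mathfrak m=\prod_{\p\in S}\p^{n_\p}$ with $n_\p\ge-v_\p(x_\p)$ and use $\mathfrak m^{-1}/\mathcal{O}_F\cong\bigoplus_{\p\in S}\p^{-n_\p}\mathcal{O}_\p/\mathcal{O}_\p$ (Chinese Remainder Theorem) to find $\alpha\in\mathfrak m^{-1}$ with $v_\p(\alpha-x_\p)\ge 0$ for $\p\in S$; then $x_f-i_f(\alpha)\in\prod_\p\mathcal{O}_\p$.

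For the multiplicative decomposition I would use the surjective homomorphism $c\colon\A_F^\times\to I_F$ onto the group $I_F$ of fractional ideals of $F$, given by $c((x_v)_v)=\prod_{\p<\infty}\p^{v_\p(x_\p)}$. Its kernel is exactly $F_\infty^\times\prod_{\p<\infty}\mathcal{O}_\p^\times$ and it carries $i(F^\times)$ onto the subgroup of principal ideals, so $c$ descends to an isomorphism $F^\times\ba\A_F^\times/(F_\infty^\times\prod_{\p}\mathcal{O}_\p^\times)\cong\mathcal{C}_F$. Choosing ideles $\pi_{\mathfrak a_i}$ with $c(\pi_{\mathfrak a_i})=\mathfrak a_i$ and using that $[\mathfrak a_1],\dots,[\mathfrak a_{h_F}]$ exhaust $\mathcal{C}_F$ gives the stated disjoint union, the disjointness being exactly the distinctness of these classes.

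The $\GL_2$ decomposition is the crux. The key external input is strong approximation for $\SL_2$: as $\SL_2$ is simply connected and $\SL_2(F_\infty)\cong\SL_2(\R)^d$ is noncompact, $i_f(\SL_2(F))$ is dense in $\SL_2(\A_{F,f})$, hence $i_f(\SL_2(F))\cdot V=\SL_2(\A_{F,f})$ for every open subgroup $V$. Given $g=(g_\infty,g_f)\in\GL_2(\A_F)$, the multiplicative decomposition fixes a unique $i$ with $\det g_f=i_f(\beta)\,\pi_{\mathfrak a_i}\,u_f$ for some $\beta\in F^\times$ and $u_f\in\prod_\p\mathcal{O}_\p^\times$ (with $\pi_{\mathfrak a_i}$ denoting the finite part of the idele). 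Replacing $g_f$ by $i_f(\diag(\beta,1))^{-1}g_f\diag(u_f^{-1},1)$, which changes $g_f$ only on the left by $i_f(\GL_2(F))$ and on the right by $K_0(\mathfrak I)$, I may assume $\det g_f=\pi_{\mathfrak a_i}=\det x_i$ where $x_i=\diag(\pi_{\mathfrak a_i},1)$, so that $g_f x_i^{-1}\in\SL_2(\A_{F,f})$. I would then apply strong approximation with the open subgroup $V=x_i\bigl(K_0(\mathfrak I)\cap\SL_2(\A_{F,f})\bigr)x_i^{-1}$, obtaining $g_f x_i^{-1}=i_f(\delta)\,x_i v x_i^{-1}$ with $\delta\in\SL_2(F)$ and $v\in K_0(\mathfrak I)\cap\SL_2(\A_{F,f})$, whence $g_f=i_f(\delta)\,x_i\,v\in i_f(\GL_2(F))\,x_i\,K_0(\mathfrak I)$; restoring the archimedean component, which commutes with the finite factors $x_i$ and $v$, yields $g\in i(\GL_2(F))\,x_i\,\GL_2(F_\infty)\,K_0(\mathfrak I)$. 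Disjointness of the union over $i$ follows by applying $\det$ and invoking the multiplicative decomposition.

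I expect the main obstacle to be the appeal to strong approximation for $\SL_2$ over $F$ itself; beyond that, the one delicate point is that strong approximation has to be run with the conjugate subgroup $V=x_i\bigl(K_0(\mathfrak I)\cap\SL_2(\A_{F,f})\bigr)x_i^{-1}$ rather than with $K_0(\mathfrak I)\cap\SL_2(\A_{F,f})$, precisely so that the element of $\SL_2(F)$ it produces can be absorbed into $i(\GL_2(F))$ on the left. The remaining verifications are formal, and since these decompositions are classical one could alternatively simply cite a standard reference.
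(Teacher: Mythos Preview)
Your argument is correct and follows the standard route to these decompositions. The paper, however, does not prove this theorem at all: it is stated in the preliminaries with the phrase ``We now recall strong approximation for number fields with class number $h_F$,'' and no proof is supplied. Immediately after the statement the paper simply records the companion fact \eqref{strongapproxSL2} for $\SL_2$, again without argument. So there is nothing in the paper to compare your proof against; the authors treat the result as background.

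That said, your derivation of the $\GL_2$ decomposition from strong approximation for $\SL_2$ is exactly the mechanism the paper relies on implicitly throughout Sections~3--5 (see e.g.\ the manipulations around \eqref{componentes} and \eqref{eq: st app}). One small remark: your adjustment step uses the specific element $\diag(u_f^{-1},1)\in K_0(\mathfrak I)$, which is fine for $K_0(\mathfrak I)$ but for a general $K_f$ with $\det(K_f)=\prod_\p\mathcal O_\p^\times$ you would instead choose any $k\in K_f$ with $\det k=u_f^{-1}$. This does not affect the argument.
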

If $F$ has class number 1, one needs only consider the case $\mathfrak a= \mathcal{O}_F$. 

Furthermore, by strong approximation for the group $\SL_2$ one has 
\begin{equation}\label{strongapproxSL2}
\SL_2(\mathbb{A}_{F,f}) = i_f(\SL_2(F))K_f
\end{equation}
for any compact open  subgroup  $K_f$  of $\SL_2(\mathbb{A}_{F,f})$. We will make use of this fact quite often in Sections 4 and 5. 



\subsubsection{Characters.}\label{ss:characters}
We fix $\chi$ a  Hecke character  
of $\A_F^{\times}/F^{\times}$, $\mathfrak I$ an ideal  divisible by the conductor of $\chi$ and we denote by $\chi_v, \chi_f, \chi_{\infty}$ the restrictions of $\chi$ to $F^{\times}_v$, $\A_{F,f}^{\times}$ and $F_{\infty}^{\times}$ respectively.
Then $\chi$ induces a character of $K_0(\mathfrak{I}, \ai)$,  still denoted by $\chi_f$, given by
\begin{equation}\label{eq:chi_f}
\chi_f\left(\left(\begin{smallmatrix}
a & b\\
c & d
\end{smallmatrix}\right)\right) =  \prod_{v} \chi_v (d_{v})
\end{equation}
and a character of  $\Gamma_0(\mathfrak{I}, \ai)$, defined by $\chi(\gamma)= \chi( i_f(\gamma)$).  We  shall assume that $\chi_{\infty}=1$ and 
\begin{equation}\label{eq:qtipo}                                                         
\phi_{q}(k(\theta))= e^{iS(q\theta)}, \; \textrm{for} \; k(\theta) \in \SO_{2}(\R)^{d}\simeq K_\infty.
\end{equation}

\subsubsection{Measures} 
As in \cite{BH10}, on  $\A_F$ we use the measure $dx$ that is the product of
the normalized Lebesgue measure
$\pi^{-d}dx_1 \ldots dx_d$ on $F_\infty$ and the Haar measure on $F_\p$ so that $\mathcal{O}_\p$ has measure $1$, for each $\p$. It induces the Haar measure on $ \A_F/F$ with total measure one.
Also, 
on $\A_F^{\times}$ we use  the measure $d^{\times}y$ that is the product of the  Haar measure $(dy_1/|y_1|) \ldots (dy_d/|y_d|)$ 
on $F_\infty^{\times}\cong {\R^\times}^d$ and  the Haar measure on $F_\p^\times$ so that $\mathcal{O}_\p^{\times}$ has measure $1$. 
It induces a Haar measure on $ \A_F^{\times}/ F^{\times}$. On $K = \SO_2(\R)^d K_0(\mathfrak{I})$ and its factors we
use  Haar probability measures and on $Z(F_\infty)\ba \GL_2(F_\infty)$  the Haar measure which satisfies 
\begin{align*}
\int_{Z(F_\infty)\ba \GL_2(F_\infty)} f(g)dg = \int_{F_\infty^{\times}} \int_{F_\infty}\int_{\SO_2(F_\infty)} f\left( \left(\begin{matrix}
y & x \\
0 & 1
\end{matrix} \right) k \right) dk dx \frac{d^{\times}y}{|y|}.
\end{align*}

On $\GL_2(F_\p)$ we fix the Haar measure so that $K_{0,\p}(\mathcal{O}_\p)$ has measure $1$ and on $Z(F_\infty)\ba \GL_2(\A_F)$ we use the product measure. This induces the Haar measure on $Z(\A_F^\times)\ba \GL_2(\A_F)$ satisfying 
\begin{align*}
\int_{Z(\A_F^\times)\ba \GL_2(\A_F)} f(g)dg = \int_{\A_F^{\times}} \int_{\A_F}\int_{\SO_2(F_\infty)K_0(\mathfrak{I})} f\left( \left(\begin{matrix}
y & x \\
0 & 1
\end{matrix} \right) k \right) dk\, dx \, \frac{d^{\times}y}{|y|}.
\end{align*}

\subsection{Adelic and classical square integrable automorphic forms}
As in \cite{Ve04} and in \cite{BH10}, we consider the subspace  FS  
of functions $f: \GL_2 (\mathbb{A}_{F}) \rightarrow \C$ such that
\begin{itemize}
		\item[(i)] $f\left(\gamma g \left( \begin{smallmatrix}
	z & 0\\
	0 & z
	\end{smallmatrix}\right)k_{0}\right) = f(g)  \chi_f(k_{0})$ for every  $g \in \GL_2(\A)$, $\gamma \in \GL_2 (F)$, $z \in F_{\infty}^{\times}$ and
	 $k_{0}\in K_{0}(\mathfrak{I})$,
\item[(ii)] $\int_{\GL_2 (F) \ba \GL_2 (\mathbb{A}) / Z(\mathbb{A}_{F}^\times)} |f(g)|^{2}dg < \infty$.
\end{itemize}

We will also make use of the subspace $\textrm {FS}_\chi$ of functions in $\textrm {FS}$ satisfying the additional condition that $f\left(g\left( \begin{smallmatrix}
z & 0\\
0 & z
\end{smallmatrix}\right)\right)= f(g)\chi(z)$ for any $z\in \A_F^\times$.

There is a  standard correspondence between the space $\FS$ 
of adelic automorphic forms and spaces of classical automorphic forms. 
One has an isomorphism (see \cite[(93)]{BH10}) 
\begin{equation}\label{eq:L2classical}
\textrm {FS} \cong \oplus_{i=1}^{h_F} L^{2}(\Gamma_{0}(\mathfrak{I},\ai_{i}) \ba \PGL_{2} (\R)^d, \chi^{-1}).
\end{equation}

Here, for any  fractional ideal $\ai$  in $F$,  $L^{2}\big(\Gamma_{0}(\mathfrak{I}, \ai) \ba \PGL_2 (\R)^d , \chi^{-1}\big)$ is the  completion of the  space of smooth functions $f: \GL_2(\R)^d \rightarrow \C$ such that
\begin{itemize}
	\item[(i)]   $f(\gamma g \left( \begin{smallmatrix}
	z & 0\\
	0 & z
	\end{smallmatrix}\right))= \chi(\gamma)^{-1} f(g)$, for every   $\gamma \in \Gamma_0 (\mathfrak{I}, \ai)$ and   $z \in F_{\infty}^{\times}$. 
	Here, $\chi(\gamma) := \chi_f (i_f (\gamma))$, 
	\item[(ii)] $\int_{\Gamma_{0}(\mathfrak{I},\ai) \ba \GL_2 (\R)^d / Z_{\infty}^\times} |f(g)|^{2}dg < \infty$.
\end{itemize}




We recall the correspondence in \eqref{eq:L2classical}. If $f\in$ FS,
 let  $f_{\mathfrak{a}_{j}} $ in $L^{2}\big(\Gamma_{0}(\mathfrak{I}, \ai_{j}) \ba \PGL_2 ( \R)^d, \chi^{-1} \big)$ be  given by    
\begin{equation} \label{classical components}
f_{\mathfrak{a}_{i}} (g_{\infty}) = \overline{\chi (\pi_{\mathfrak{a}_{i}})} f \left( \left( \begin{matrix}
\pi_{\mathfrak{a}_{i}} & 0\\
0 & 1
\end{matrix} \right)g_{\infty} \right), \textrm{ for } i=1, \ldots, h_F.
\end{equation}
Note that $f_{\mathfrak{a}_{i}}$ is independent of the choice of the idele $\pi_{\mathfrak{a}_{i}}$.

Conversely, given $f_{\mathfrak a_i}$ in $L^{2}\big(\Gamma_0 (\mathfrak{I}, \ai_{i}) \ba \PGL_2( \R)^d, \chi^{-1}
\big)$ for $i = 1, \ldots, h_F$, we can recover   $f \in \textrm{FS}$
by defining its  value in each component by
\begin{align*}
f\left( \gamma \left(\begin{matrix}
\pi_{\ai_{i}} & 0\\
0 & 1
\end{matrix} \right)g_{\infty}  k_{0}\right):= \chi_f(k_{0}) f_{\mathfrak a_i}(g_{\infty})\chi(\pi_{\ai_i}),
\end{align*}
for $\gamma \in \GL_2 (F)$ and $g_{\infty} \in  \GL_2(\R)^d$.  
This map is well defined and both maps are the inverse of each other, thus we have \eqref{eq:L2classical}.
Furthermore, this correspondence is a $(\mathfrak g, K_\infty)$-morphism of the associated  $(\mathfrak g, K_\infty)$-modules. 

\

The group $\GL_2(\R)^d$ acts unitarily on each Hilbert space  $L^2(\Gamma_0(\mathfrak{I}, \mathfrak{a}_i) \ba \PGL_2(\R)^d, \chi^{-1})$ by right translations and
there is an orthogonal decomposition
\begin{align}\notag
L^2 (\Gamma_0(\mathfrak{I}, \mathfrak{a}_i)\ba \PGL_2(\R)^d,\! \chi^{-1})\! = \! L^{2,\textrm{cont}} (\Gamma_0(\mathfrak{I}, \mathfrak{a}_i)\ba \PGL_2(\R)^d, \chi^{-1}) \!\oplus\! L^{2,\textrm{disc}} (\Gamma_0(\mathfrak{I}, \mathfrak{a}_i)\ba \PGL_2(\R)^d, \chi^{-1}).
\end{align}
The 
invariant subspace $L^{2,\textrm{cont}}(\Gamma_0(\mathfrak{I}, \mathfrak{a}_j)\ba \PGL_2(\R)^d, \chi^{-1})$ can be described by integrals of Eisenstein series and the orthogonal complement $L^{2,\textrm{disc}} (\Gamma_0(\mathfrak{I}, \mathfrak{a}_i)\ba \PGL_2(\R)^d, \chi^{-1})$ is an orthogonal direct sum of closed irreducible $\PGL_2(\R)^d$-invariant subspaces.

Furthermore, for each $\mathfrak{a}_i$ one has a decomposition 
\begin{equation} 
L^2 (\Gamma_0(\mathfrak{I}, \mathfrak{a}_i)\ba \PGL_2(\R)^d,\! \chi^{-1})
= \sum_\xi L^2_\xi (\Gamma_0(\mathfrak{I}, \mathfrak{a}_i)\ba \PGL_2(\R)^d,\! \chi^{-1})
\end{equation}
where $\xi$  runs through the characters of the group $M=\left\{\left(\left(\begin{smallmatrix}  \zeta_1 &0\\0&1 \end{smallmatrix}\right),\ldots, \left(\begin{smallmatrix} \zeta_d &0\\0& 1 \end{smallmatrix}\right)\right) \; : \; \zeta_j = \pm 1 \right\}$ satisfying the compatibility condition 
$\chi(\left(\begin{smallmatrix}  -1&0\\0&1 \end{smallmatrix}\right),\ldots, \left(\begin{smallmatrix}  -1&0\\0&1 \end{smallmatrix}\right))=\prod_1^d (-1)^{\xi_j}=1$ with $\xi_j\in \{0,1\}$.

\subsection{Fourier terms}

We fix  a maximal orthogonal system $\{V_\varpi\}_\varpi$ of irreducible invariant subspaces of the Hilbert space $L^{2,\textrm{ disc}} (\Gamma_0(\mathfrak{I}, \mathfrak{a})\ba \PGL_2(\R)^d, \chi)$. 
Each such subspace $\varpi$ has finite multiplicity and 
splits as a tensor product $\varpi \cong \otimes_j \varpi_j$ of irreducible representations $\varpi_j$ of $\GL_2(\R)$, where $j$ runs over the $d$ archimedean places of $F$. 


Set $\psi_{\infty}(x) = e^{2\pi i S(x)} = e^{2\pi i (x_1 + \ldots + x_d)}$.
Every    classical automorphic form $f$ for $\Gamma_0(\mathfrak I,\mathfrak a)$
has an expansion 
\begin{equation}
f(n(x_\infty)  g_\infty)= \sum_{r\in \ai^{-1} \mathfrak{d}^{-1} } {\psi}_{\infty}(rx_\infty)
F_{\ai,r}f(g_\infty) \;\;\; (n \in N)
\end{equation}
where for  $r\in \mathfrak{a}^{-1} \mathfrak{d}^{-1} $ \begin{equation}
F_{\ai,r} f(g):= \frac{1}{\Vol (\R^d / \mathfrak a)} \int_{\R^d /\mathfrak a} e^{-2\pi i S(rx)} f(n(x)g)dx.
\end{equation}

A $(\Gamma, \chi)$-automorphic function is cuspidal if $F_{\ai,0} f = 0$. 

If $\varpi = \otimes_{j} \varpi _{j}$ is irreducible with spectral parameter $\nu_\varpi$ and $f$ in $V_\varpi$ is of weight  $q \in \Z^d$, then 
\begin{equation}\label{fourierterm}
F_{\ai,r}( f)(g) = c^{\ai,r}( f)d^{\ai,r}(q, \nu_{\varpi}) W_{q}(r, \nu_{\varpi}; g)     
\end{equation}
with
\begin{equation}\label{eq:d^r}
d^{\ai, r}(q,\nu):= \frac{1}{\sqrt{2^{d}|D_{F}N(\ai r)|}} \prod_{j=1}^{d} \frac{e^{\pi i q_{j}}}{\Gamma \left( \frac{1}{2}+\nu_{j}+ \frac{q_{j}}{2}\sign (r_{j}) \right)} \;\;\textrm{ and }
\end{equation}
\begin{equation}\label{whittaker}
W_{q}(r, \nu;\left(\begin{smallmatrix}z_\infty&0\\0& z_\infty\end{smallmatrix}\right) n(x_\infty)a(y_\infty)k_\infty) := {\psi}_{\infty}(rx_\infty)\phi_{q}(k_\infty) \epsilon_{\varpi}(\sign(ry_{\infty})) \prod_{j=1}^{d}W_{q_{j}\sign(r_{j}y_j)/2, \nu_{j}} (4\pi |r_{j}y_{j}|),
\end{equation}
in light of \eqref{eq:Iwasawa} and \cite[\S 2.3.4]{BM09} and where $\epsilon_{\varpi}: \lbrace \pm 1 \rbrace ^{d} \rightarrow \lbrace \pm 1 \rbrace$ is a character depending only on the representation $\varpi$.
Here $W_{l,\nu}(y)$ denotes the $W$-Whittaker function
and  $c^{\ai,r}( f)$ is  the   Fourier coefficient  of  order $r$ of $ f_\ai$, for $r\in \ai^{-1} \mathfrak{d}^{-1}$.





Now let  $f\in \textrm{FS}_q$  be an adelic automorphic form of weight $q$ such that  $C_{j}f= (\frac{1}{4}-\nu_{j}^{2})f$ for $1 \le j \le d$, with $\nu_{j} \in \C$ for each $j$. 
By the discussion above, every component $f_\ai$ of $f$ is a classical automorphic form of weight $q\in \Z^d$ and hence has a Fourier-Whittaker expansion
\begin{align}\notag
f_{\ai} \left(\left( \begin{matrix}
y_\infty & x_\infty\\
0 & 1 \end{matrix} \right)k_\infty \right) &= \overline{\chi (\pi_{\mathfrak{a}})}
f  \left(\left(\begin{matrix}
\pi_{\ai} & 0\\
0 & 1 \end{matrix} \right)\left( \begin{matrix}
y_\infty & x_\infty\\
0 & 1 \end{matrix} \right)k_\infty \right)\\
& \label{eq: fourier expansion}=  \overline{\chi (\pi_{\mathfrak{a}})}\sum_{r \in \ai^{-1}\mathfrak d^{-1} } c^{\ai , r}(f) d^{\ai,r}(q,\nu) W_q(r,\nu;a(y_{\infty})) \psi(rx_\infty) \phi_q(k_\infty ),
\end{align}
for every $y_\infty \in F_\infty^\times$, $x \in F_\infty$ and $k_\infty \in K_\infty$.

\section{Action of the center} 
\label{section: grupo of clases}

In this section we recall the  action  of $Z(\mathbb{A}_F^\times)$ in the components of  $\GL_2(F) \ba \GL_2(\mathbb{A}_F) / K_{0}(\mathfrak{I}) Z(F_{\infty}^\times)$ 
 (see \cite[\S 6]{Ve04}).

Let $g_{\infty} \in \GL_{2}(\R)^d$, let $\mathfrak{a}, \mathfrak{b}$ be   fractional ideals of $F$ and $\pi_{\mathfrak{a}}, \pi_{\mathfrak{b}}$ be  associated ideles. As a consequence of  
strong  approximation in $\SL_2$ 
we have
\begin{align} \notag
\left(\begin{matrix}
\pi_{\mathfrak{b}} & 0\\
0 & \pi_{\mathfrak{b}}
\end{matrix} \right)\left(\begin{matrix}
\pi_{\mathfrak{a}} & 0\\
0 & 1
\end{matrix} \right)g_{\infty}& =
\left(\begin{matrix}
\pi_{\mathfrak{b}}^{-1} & 0\\ 0 & \pi_{\mathfrak{b}}
\end{matrix} \right)\left(\begin{matrix}
\pi_{\mathfrak{b}}^{2}\pi_{\mathfrak{a}} & 0\\
0 & 1
\end{matrix} \right)g_{\infty}\\&
=  \label{componentes}i(\gamma^{-1})\left(\begin{matrix}
\pi_{\mathfrak{b}}^{2}\pi_{\mathfrak{a}} & 0\\
0 & 1
\end{matrix} \right)i_\infty(\gamma) g_{\infty}k_{\gamma}
\end{align}
for some $k_{\gamma} \in K_{0}(\mathfrak{I})$ and $\gamma \in \GL_2(F)$. 
Furthermore we see that
\begin{equation} \label{flecha}
\gamma \in i(\GL_2 (F)) \cap \GL_{2}(\R)^d \left(\begin{smallmatrix}\pi_{\mathfrak{b}}^{2}\pi_{\ai} & 0 \\
0 & 1 \end{smallmatrix}\right)k_{\gamma} \left(\begin{smallmatrix}\pi_{\mathfrak{b}}\pi_{\ai} & 0 \\
0 & \pi_{\mathfrak{b}} \end{smallmatrix}\right)^{-1}, \textrm{ with } k_{\gamma} \in K_0 (\mathfrak{I}).
\end{equation}
Thus, \eqref{componentes} shows that translation by the  central element $\left(\begin{smallmatrix}
 \pi_{\mathfrak{b}} & 0\\
 0 & \pi_{\mathfrak{b}}
 \end{smallmatrix} \right)$  moves the component $\mathfrak{a}$ in \eqref{strongapprox} to the  component $\mathfrak{ab}^{2}$.

 Now, as in \cite{Ve04}, we  denote by 
$\Gamma(\ai \rightarrow \mathfrak{ab}^{2})$ the set of $ \gamma \in i(\GL_2 (F))$ satisfying $(\ref{flecha})$. 
  Then, one checks that for any $\gamma_{\mathfrak{a}\rightarrow \mathfrak{ab}^{2}} \in \Gamma(\mathfrak{a}\rightarrow \mathfrak{ab}^{2})$, one has
 $$\gamma_{\ai \rightarrow \mathfrak{ab}^{2}}\Gamma_{0}(\mathfrak{I},\ai ) = \Gamma_{0}(\mathfrak{I},\mathfrak{ab}^{2})\gamma_{\ai \rightarrow \mathfrak{ab}^{2}}=\Gamma(\mathfrak{a}\rightarrow \mathfrak{ab}^{2}).$$


 Given $f\in \textrm {FS}$ with classical components ${f_{\ai}}_i \in L^2 (\Gamma_0(\mathfrak{I}, \mathfrak{a})\ba \GL_2(\R)^d, \chi^{-1})$, $i=1,\ldots,h_F$,
 and $z := \left( \begin{smallmatrix}
\pi_{\mathfrak{b}} & 0\\
0 & \pi_{\mathfrak{b}}
\end{smallmatrix} \right)$, using  \eqref{componentes} we compute  
\begin{align} \notag
(z.f)_{\ai}(g_{\infty})=[\pi(z).f]_{\ai}(g_{\infty})&= \overline{\chi(\pi_{\ai})}f\left(\left(\begin{smallmatrix}
\pi_{\ai} & 0\\\notag
0 & 1
\end{smallmatrix} \right)\left(\begin{smallmatrix}
\pi_{\mathfrak{b}} & 0\\
0 & \pi_{\mathfrak{b}}
\end{smallmatrix} \right)g_{\infty}\right) \\\notag &= \overline{\chi(\pi_{\ai})}f\left(\left(\begin{smallmatrix}
\pi_{{\mathfrak{b}}}^{2}\pi_{\ai} & 0\\
0 & 1
\end{smallmatrix} \right)\gamma_{\ai \rightarrow \mathfrak{ab}^{2}}g_{\infty}k_{\gamma_{\ai \rightarrow \mathfrak{ab}^{2}}}\right) \\
\notag & =\chi(\pi_{\mathfrak{b}}^{2}) \overline{\chi(\pi_{\ai}\pi_{\mathfrak{b}}^{2})}f\left(\left(\begin{smallmatrix}
\pi_{\mathfrak{b}}^{2}\pi_{\ai} & 0\\
0 & 1
\end{smallmatrix} \right)\gamma_{\ai \rightarrow \mathfrak{ab}^{2}}g_{\infty}\right) \chi_f(k_{\gamma_{\ai \rightarrow \mathfrak{ab}^{2}}}) \\ \label{eq:zf} &= \chi(\pi_{\mathfrak{b}}^{2}) f_{\mathfrak{ab}^{2}}(\gamma_{\ai \rightarrow \mathfrak{ab}^{2}}g_{\infty})\chi_f(k_{\gamma_{\ai \rightarrow \mathfrak{ab}^{2}}}).
\end{align}

We  will also need  the following  explicit description of the set $\Gamma (\ai  \rightarrow \mathfrak{ab}^{2})$. 

\begin{lemma}  \cite[(86)]{Ve04}
\begin{equation}\notag
\Gamma(\ai \rightarrow \mathfrak{ab}^{2}) = \left\lbrace \left( \begin{smallmatrix}
a & b\\
c & d
\end{smallmatrix} \right) : a\in \mathfrak{b}, b\in \mathfrak{ab}, c\in \ai^{-1}\mathfrak{b}^{-1}\mathfrak{I}, d\in \mathfrak{b}^{-1}, ad-bc \in \mathcal{O}_{F}^{\times} \right\rbrace .
\end{equation}
\end{lemma}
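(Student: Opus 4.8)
The plan is to reduce the defining condition \eqref{flecha} to a purely finite-place statement and then translate the local constraints into ideal memberships. Fix ideles $\pi_\ai,\pi_{\mathfrak b}$ with trivial archimedean components. Any $\gamma=\left(\begin{smallmatrix}a&b\\ c&d\end{smallmatrix}\right)\in\GL_2(F)$ maps into $\GL_2(\R)^d$ at the archimedean places, and the factor $\GL_2(\R)^d$ occurring in \eqref{flecha} absorbs the archimedean components of the remaining ideles, so the membership $\gamma\in\Gamma(\ai\rightarrow\mathfrak{ab}^{2})$ is equivalent to the single finite-place condition
$$
k_\gamma\;:=\;\left(\begin{smallmatrix}\pi_{\mathfrak b}^{2}\pi_\ai&0\\ 0&1\end{smallmatrix}\right)^{-1} i_f(\gamma)\left(\begin{smallmatrix}\pi_{\mathfrak b}\pi_\ai&0\\ 0&\pi_{\mathfrak b}\end{smallmatrix}\right)\ \in\ K_0(\mathfrak I)=\prod_v K_{0,v}(\mathfrak I_v,\mathcal O_v).
$$
In particular the existential ``for some $k_\gamma\in K_0(\mathfrak I)$'' in \eqref{flecha} becomes the assertion that this one explicit matrix lies in $K_0(\mathfrak I)$.

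First I would compute $k_\gamma$ by a direct matrix multiplication:
$$
k_\gamma=\left(\begin{smallmatrix}\pi_{\mathfrak b}^{-1}a & (\pi_{\mathfrak b}\pi_\ai)^{-1}b\\ \pi_{\mathfrak b}\pi_\ai\, c & \pi_{\mathfrak b}\, d\end{smallmatrix}\right),\qquad \det k_\gamma = ad-bc,
$$
the point being that all the $\pi$'s cancel in the determinant. Then I would impose, at each finite place $v$, the defining conditions of $K_{0,v}(\mathfrak I_v,\mathcal O_v)$ on the entries of $k_\gamma$: the diagonal and upper-right entries must lie in $\mathcal O_v$, the lower-left entry in $\mathfrak I_v$, and $\det k_\gamma\in\mathcal O_v^\times$. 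By the local description of fractional ideals (an element $x\in F$ lies in a fractional ideal $\mathfrak c$ iff $x\in\pi_{\mathfrak c}\mathcal O_v$ for every finite $v$), the conjunction over all $v$ of ``$\pi_{\mathfrak b}^{-1}a\in\mathcal O_v$'' is exactly $a\in\mathfrak b$; that of ``$(\pi_{\mathfrak b}\pi_\ai)^{-1}b\in\mathcal O_v$'' is $b\in\mathfrak{ab}$; that of ``$\pi_{\mathfrak b}\pi_\ai\, c\in\mathfrak I_v$'' is $c\in\ai^{-1}\mathfrak b^{-1}\mathfrak I$; that of ``$\pi_{\mathfrak b}\, d\in\mathcal O_v$'' is $d\in\mathfrak b^{-1}$; and ``$ad-bc\in\mathcal O_v^\times$ for all $v$'' is equivalent to $ad-bc\in\mathcal O_F^\times$. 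Conversely, entries satisfying these four ideal memberships and the unit condition produce a matrix $k_\gamma$ lying in $K_{0,v}(\mathfrak I_v,\mathcal O_v)$ for every $v$, hence in $K_0(\mathfrak I)$, so $\gamma\in\Gamma(\ai\rightarrow\mathfrak{ab}^{2})$. This establishes the asserted equality of sets.

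Two routine points round off the argument. One should note that $\Gamma(\ai\rightarrow\mathfrak{ab}^{2})$ does not depend on the chosen ideles $\pi_\ai,\pi_{\mathfrak b}$: replacing them by unit multiples changes $k_\gamma$ by unit ideles at the finite places (which does not affect membership in $K_0(\mathfrak I)$) and by archimedean scalars absorbed by $\GL_2(\R)^d$, while the entrywise conditions in the statement are manifestly choice-independent. As a consistency check, this description is compatible with the two-sided coset identity $\gamma_{\ai\rightarrow\mathfrak{ab}^{2}}\Gamma_0(\mathfrak I,\ai)=\Gamma_0(\mathfrak I,\mathfrak{ab}^{2})\gamma_{\ai\rightarrow\mathfrak{ab}^{2}}=\Gamma(\ai\rightarrow\mathfrak{ab}^{2})$ recorded above: multiplying a matrix of the stated shape on the right by an element of $\Gamma_0(\mathfrak I,\ai)$ preserves all four ideal memberships. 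There is no genuine obstacle here; the whole content is the explicit conjugation computation for $k_\gamma$ together with the local-to-global passage for fractional ideals. The only thing requiring care is keeping the normalizations straight --- trivial archimedean components for $\pi_\ai,\pi_{\mathfrak b}$, and the convention $K_0(\mathfrak I)=K_0(\mathfrak I,\mathcal O_F)$, so that its upper-right entries lie in $\mathcal O_v$ and its lower-left entries in $\mathfrak I_v$.
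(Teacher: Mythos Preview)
Your argument is correct. The paper does not supply its own proof of this lemma; it simply quotes the result from \cite[(86)]{Ve04}, so there is no in-paper approach to compare against. Your direct computation of $k_\gamma$ and the local-to-global translation of the entrywise conditions is exactly the natural verification one would expect, and all four ideal memberships together with the unit condition on the determinant come out as stated.
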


Let  
$\GL_{2}(F) = P_{F} \sqcup C_{F}$,  the Bruhat decomposition,  with
$C_{F}=\left\lbrace \left( \begin{matrix}
* & *\\
c \neq 0 & * 
\end{matrix} \right)\right\rbrace$, the big Bruhat cell.
Then one has that   $\Gamma(\ai \rightarrow \mathfrak{ab}^{2})$ is contained in $C_{F}$, except when  $\mathfrak{b}$ is principal.

Indeed, if $\gamma \in \Gamma(\ai \rightarrow \mathfrak{ab}^{2}) \cap P_{F}$, then $\gamma = \left( \begin{smallmatrix}
a & b\\
0 & d
\end{smallmatrix} \right)$ with $ad \in \mathcal{O}_{F}^{\times}$, $a \in \mathfrak{b}$, $d \in \mathfrak{b}^{-1}$. Now,
 since $ad= u\in \mathcal{O}_{F}^{\times}$, then $a^{-1}=du^{-1} \in \mathfrak{b}^{-1}$. Thus $a^{-1}\mathfrak{b} \subseteq \mathcal{O} \Rightarrow \mathfrak{b}\subseteq a\mathcal{O} \subseteq \mathfrak{b}$, that is $\mathfrak{b}= a\mathcal{O}$.

\section{Hecke operators}

In this section we define the Hecke operators on  adelic functions and then   translate the  action to the classical components  of the given function.

 Let $\mathfrak{p}$ be a prime ideal in $\mathcal{O}_{F}$ such that $\p \nmid \mathfrak{I}$, let $F_{\mathfrak{p}}$, $\mathcal{O}_{\mathfrak{p}}$ be the  completions of $F$ and $\mathcal{O}$ at $\mathfrak{p}$ respectively, and let $\pi_{\mathfrak{p}}$ be the uniformizer of the local ring $\mathcal{O}_{\mathfrak{p}}$.

Let $\Delta (\mathfrak{p^{\ell}}) = \lbrace g \in M_{2}(\mathcal{O}_{\mathfrak{p}})  :  \operatorname{det}(g)\in \pi_{\mathfrak{p}}^{\ell}\mathcal{O}_{\mathfrak{p}} ^{\times
} \rbrace$.
Then 
\begin{align}
\Delta (\mathfrak{p^{\ell}})&= \GL_2 (\mathcal{O}_{\mathfrak{p}})\left( \begin{matrix}
\pi_{\mathfrak{p}}^{\ell} & 0\\
0 & 1
\end{matrix}\right)\GL_2 (\mathcal{O}_{\mathfrak{p}})
= \bigsqcup_{s = 0}^{\ell} \bigsqcup_{\beta \in \mathcal{O}_{\mathfrak{p}}/\pi_{\mathfrak{p}}^{\ell - s}\mathcal{O}_{\mathfrak{p}}} \left( \begin{matrix}
\pi_{\mathfrak{p}}^{\ell - s} & \beta\\
0 & \pi_{\p}^{s}
\end{matrix}\right)\GL_2 (\mathcal{O}_{\mathfrak{p}}).  \notag
\end{align}

Given  $f\in \textrm {FS}_q$ 
and $g \in \GL_2(\mathbb{A}_F)$, 
the  Hecke operator $T_{\mathfrak{p}^\ell}$ is defined by
\begin{align}\label{Heckedefn}
(T_{\mathfrak{p}^{\ell}}f)(g)=& \int_{\GL_{2}(\mathbb{A}_{f})} f(gx)
\chi_{_{\Delta(\mathfrak{p}^{\ell})}}(x) {dx} =
\sum_{s = 0}^{\ell}\sum_{\beta \in \mathcal{O}_{\mathfrak{p}}/\pi_{\mathfrak{p}}^{\ell - s} \mathcal{O}_{\mathfrak{p}} } f\left(g\left(\begin{matrix}
\pi_{\mathfrak{p}^{\ell - s}} & \beta\\
0 & \pi_{\mathfrak{p}^{s}}
\end{matrix} \right) \right) 
\end{align}
where $\chi_{_{\Delta(\mathfrak{p}^{\ell})}}(x)$ denotes  the characteristic function of 
$\Delta(\mathfrak{p}^{\ell})$.

We now determine the  classical  components $(T_{\mathfrak{p^\ell}}f)_{\ai}$ of $T_{\mathfrak{p^\ell}}(f)$,  for $f\in\FS$.

We have that  
$\left(\begin{matrix}
\pi_{\mathfrak{a}} & 0\\
0 & 1
\end{matrix} \right)\left(\begin{matrix}
\pi_{\mathfrak{p}}^{\ell - s} & \beta\\
0 & \pi_{\p}^{s}
\end{matrix} \right) = \left(\begin{matrix}
\pi_{\mathfrak{p}}^{- s} & \pi_{\ai}\beta\\
0 & \pi_{\p}^{s}
\end{matrix} \right)\left(\begin{matrix}
\pi_{\mathfrak{a}}\pi_{\p}^{\ell} & 0\\
0 & 1
\end{matrix} \right)$, and furthermore, by strong approximation in $\SL_2$,   $\left(\begin{matrix}
\pi_{\mathfrak{p}}^{- s} & \pi_{\ai}\beta\\
0 & \pi_{\p}^{s}
\end{matrix} \right)= \gamma_{\beta,s} k_{s}$ with $\gamma_{\beta,s} \in \SL_{2}(F)$ and $k_{s} \in K_0(\mathfrak{I})$. Thus

\begin{align}
\notag (T_{\mathfrak{p}^{\ell}}f)_{\mathfrak{a}}(g_{\infty})
&= \sum_{s = 0}^{\ell}\sum_{\beta \in \mathcal{O}_{\mathfrak{p}}/\pi_{\mathfrak{p}}^{\ell - s} \mathcal{O}_{\mathfrak{p}} }\overline{\chi(\pi_{\ai})}f\left(\left(\begin{matrix}
\pi_{\mathfrak{a}} & 0\\
0 & 1
\end{matrix} \right)\left(\begin{matrix}
\pi_{\mathfrak{p}}^{\ell - s} & \beta\\
0 & \pi_{\p}^{s}
\end{matrix} \right)g_{\infty} \right)  \notag
\\&= \notag \sum_{s = 0}^{\ell}\sum_{\beta \in \mathcal{O}_{\mathfrak{p}}/  \pi_{\mathfrak{p}}^{\ell - s}\mathcal{O}_{\mathfrak{p}}}\overline{\chi(\pi_{\ai})}f\left(\left(\begin{matrix}
\pi_{\mathfrak{a}}\pi_{\p}^{\ell} & 0\\
0 & 1
\end{matrix} \right)i_{\infty}(\gamma_{\beta,s}^{-1})g_{\infty} \right)\chi_f(k_{{s}}) \\
&=\sum_{s = 0}^{\ell}\sum_{\beta \in \mathcal{O}_{\mathfrak{p}}/  \pi_{\mathfrak{p}}^{\ell - s}\mathcal{O}_{\mathfrak{p}}}\chi(\pi_{\p}^{\ell})f_{\ai \p^{\ell}}\left(i_{\infty}(\gamma_{\beta,s}^{-1})g_{\infty} \right)\chi_f (k_{s}) \label{eq: tpl en a}.
\end{align}

Thus, we have expressed   $(T_{\p^{\ell}}f)_{\ai}$ in terms of values of  $f_{\ai \p^{\ell}}$.    Now, we shall see that,  in light of  \eqref{componentes}, 
when $\mathfrak p^{\ell}$ is a square in the 
class group $\mathcal{C}_F$,  then   $\left(T_{\p^{\ell}}\left(\begin{smallmatrix}
\pi_{\mathfrak{b}}^\ell & 0\\
0 & \pi_{\mathfrak{b}}^{\ell}
\end{smallmatrix}\right)f\right)_{\ai}$ can be expressed in terms of values of $f_{\ai}$, again at the initial component $\ai$.  

\begin{proposition} \label{relacion de los tpl} 
	Let $f\in \FS_{q}$, let $\p\subset \mathcal O_F$ be a prime ideal, $\p \nmid \mathfrak{I}$  and $\p$ a square in $C_F$. Let  $\mathfrak{b}$ be an integral ideal such that $\p\mathfrak{b}^{2} =  \eta_{\p\mathfrak{b}^2} \mathcal O_F$ with $\eta_{\p\mathfrak{b}^2} \in \mathcal{O}_F$ totally positive.
Then   
	\begin{equation}
(T_{\p^{\ell}}f)_{\ai}(g_{\infty}) =  \chi^{-1}(\pi_{\mathfrak{b}^{\ell}}) \sum_{s = 0}^{\ell} \chi_f(k_{s})  f_{\mathfrak a,s} (g_\infty),\,\, \textrm{where }
	\end{equation}
	\begin{equation}\label{eq:fs}
	f_{\mathfrak a,s} (g_\infty)=\sum_{\alpha_s \in \ai \p^{- s} / \ai\p^{\ell - 2s}} f_{\ai}  
	\left( \left(\begin{matrix}
\eta_{\p \mathfrak{b}^{2}}^{-\ell}a_{s} & \eta_{\p \mathfrak{b}^{2}}^{-\ell}\widetilde{b_{s}}\\
0 & a_{s}^{-1} 
\end{matrix} \right)  
	\left(\begin{matrix}
	1 & \alpha_s\\
	0 & 1
	\end{matrix}\right) g_{\infty} \right),
	\end{equation}
where $\widetilde{b_{s}} \in \mathcal{O}_F$ and	$a_s \in \p^s \mathfrak{b}^\ell$ satisfies $v_{\p}(a_s) = s + \ell v_{\p}(\mathfrak{b})$, for any $0 \le s \le \ell$. 
\end{proposition}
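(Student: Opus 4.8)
The plan is to begin with the formula \eqref{eq: tpl en a}, which expresses $(T_{\p^{\ell}}f)_{\ai}(g_{\infty})$ as $\sum_{s=0}^{\ell}\sum_{\beta\in\mathcal O_{\p}/\pi_{\p}^{\ell-s}\mathcal O_{\p}}\chi(\pi_{\p}^{\ell})\,\chi_f(k_{s})\,f_{\ai\p^{\ell}}\!\left(i_{\infty}(\gamma_{\beta,s}^{-1})g_{\infty}\right)$, and then to re-express the component $f_{\ai\p^{\ell}}$ in terms of $f_{\ai}$. The point is that the hypothesis gives $\p^{\ell}\mathfrak b^{2\ell}=\eta^{\ell}\mathcal O_F$ with $\eta:=\eta_{\p\mathfrak b^{2}}$ totally positive, hence $\ai\p^{\ell}=\eta^{\ell}\,\ai\,\mathfrak b^{-2\ell}$ and $\ai\p^{\ell}\mathfrak b^{2\ell}=\eta^{\ell}\ai$. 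Using the description of the action of $Z(\A_F^{\times})$ from Section~\ref{section: grupo of clases} (in particular \eqref{componentes} and \eqref{eq:zf}) together with strong approximation in $\SL_2$, one writes the idele $\left(\begin{smallmatrix}\pi_{\ai\p^{\ell}}&0\\0&1\end{smallmatrix}\right)$ as a product of a rational matrix (absorbed by left $\GL_2(F)$-invariance of $f$), the archimedean factor $i_{\infty}\!\left(\begin{smallmatrix}\eta^{-\ell}&0\\0&1\end{smallmatrix}\right)$ (which survives because $\chi_{\infty}=1$), the idele $\left(\begin{smallmatrix}\pi_{\ai}&0\\0&1\end{smallmatrix}\right)$, an element of $K_0(\mathfrak I)$, and the central idele $\left(\begin{smallmatrix}\pi_{\mathfrak b}^{-\ell}&0\\0&\pi_{\mathfrak b}^{-\ell}\end{smallmatrix}\right)$. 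Since $f$ has central character $\chi$, the last factor contributes $\chi^{-1}(\pi_{\mathfrak b^{\ell}})$; the $K_0(\mathfrak I)$-equivariance contributes factors $\chi_f$; and (using $\p\nmid\mathfrak I$, $\chi_{\infty}=1$ and triviality of $\chi$ on $F^{\times}$) the factor $\chi(\pi_{\p}^{\ell})$ and the remaining contributions cancel. One is left with $(T_{\p^{\ell}}f)_{\ai}(g_{\infty})=\chi^{-1}(\pi_{\mathfrak b^{\ell}})\sum_{s=0}^{\ell}\chi_f(k_{s})\sum_{\beta}f_{\ai}\!\left(M_{s,\beta}\,g_{\infty}\right)$, where $M_{s,\beta}\in\GL_2(F_{\infty})$ is an explicit product involving $i_{\infty}(\gamma_{\beta,s}^{-1})$, the component-change element of $\Gamma(\ai\p^{\ell}\to\ai\p^{\ell}\mathfrak b^{2\ell})$, and $\left(\begin{smallmatrix}\eta^{-\ell}&0\\0&1\end{smallmatrix}\right)$.

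It then remains to put $M_{s,\beta}$ in the asserted shape and to reindex the inner sum. For this one uses the explicit descriptions of the matrices involved: $\gamma_{\beta,s}\in\SL_2(F)$ is characterized by $\left(\begin{smallmatrix}\pi_{\p}^{-s}&\pi_{\ai}\beta\\0&\pi_{\p}^{s}\end{smallmatrix}\right)=\gamma_{\beta,s}k_{s}$, so its entries have prescribed $\p$-valuations and are $\p$-integral away from $\p$, while the component-change element is given by Lemma~\cite[(86)]{Ve04} (with parameter $\mathfrak b^{\ell}$) and by the Bruhat discussion following it. Because $\ai\p^{\ell}\mathfrak b^{2\ell}=\eta^{\ell}\ai$ is principal, the resulting product lies in the Borel subgroup and equals $\left(\begin{smallmatrix}\eta^{-\ell}a_{s}&\eta^{-\ell}\widetilde{b_{s}}\\0&a_{s}^{-1}\end{smallmatrix}\right)\left(\begin{smallmatrix}1&\alpha_{s}\\0&1\end{smallmatrix}\right)$ with $a_{s}\in\p^{s}\mathfrak b^{\ell}$ satisfying $v_{\p}(a_{s})=s+\ell\,v_{\p}(\mathfrak b)$, $\widetilde{b_{s}}\in\mathcal O_F$, and unipotent shift $\alpha_{s}$. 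As $\beta$ runs through $\mathcal O_{\p}/\pi_{\p}^{\ell-s}\mathcal O_{\p}$, the shift $\alpha_{s}$ runs through a complete set of representatives of $\ai\p^{-s}/\ai\p^{\ell-2s}$: the two finite abelian groups have the same order $N(\p)^{\ell-s}$, and the bijection is the one induced by $\beta\mapsto\pi_{\ai}\beta$ after clearing the $\p$-adic denominator. Substituting this gives \eqref{eq:fs}.

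The main obstacle is precisely this last step: one must check that the product of the various $\SL_2(F)$-matrices and diagonal ideles genuinely collapses to the stated Borel element with the asserted valuation $v_{\p}(a_{s})=s+\ell\,v_{\p}(\mathfrak b)$ and the asserted index set for $\alpha_{s}$, and simultaneously that every accumulated character value collapses to exactly $\chi^{-1}(\pi_{\mathfrak b^{\ell}})\,\chi_f(k_{s})$ with no leftover unit-idele contribution. This forces one to fix mutually compatible choices of the ideles $\pi_{\ai},\pi_{\p},\pi_{\mathfrak b}$ (and of a generator-idele for $\eta^{\ell}$) and of the strong-approximation data $\gamma_{\beta,s},k_{s}$, and to use repeatedly that $\p\nmid\mathfrak I$, $\chi_{\infty}=1$, and $\chi$ is trivial on $F^{\times}$.
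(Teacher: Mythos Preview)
Your route is essentially the one in the paper, only organized slightly differently: the paper multiplies by the central idele $\left(\begin{smallmatrix}\pi_{\mathfrak b}^{\ell}&0\\0&\pi_{\mathfrak b}^{\ell}\end{smallmatrix}\right)$ \emph{first} and then performs a single strong-approximation step for the matrix $\left(\begin{smallmatrix}\pi_{\mathfrak b}^{-\ell}\pi_{\p}^{-s}&\pi_{\ai}\pi_{\mathfrak b}^{\ell}\beta\\0&\pi_{\p}^{s}\pi_{\mathfrak b}^{\ell}\end{smallmatrix}\right)$, whereas you first invoke \eqref{eq: tpl en a} (going to $f_{\ai\p^{\ell}}$) and then apply the center action to return to $f_{\ai}$. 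Either organization works, and the character bookkeeping you describe is correct.

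The genuine gap is your sentence ``Because $\ai\p^{\ell}\mathfrak b^{2\ell}=\eta^{\ell}\ai$ is principal, the resulting product lies in the Borel subgroup.'' This is exactly the step that is \emph{not} automatic and that the paper spends most of its effort on. The component-change matrix lies in $\Gamma(\ai\p^{\ell-2s}\to\ai\p^{\ell}\mathfrak b^{2\ell})$, and as remarked at the end of Section~\ref{section: grupo of clases}, such elements are in the big Bruhat cell whenever $\mathfrak b^{\ell}$ (respectively $\p^{s}\mathfrak b^{\ell}$) is not principal; so the product $M_{s,\beta}$ is \emph{not} upper triangular in general. What the paper actually does (see \eqref{eq:as}) is to take the element $\left(\begin{smallmatrix}a_s&b_s\\c_s&d_s\end{smallmatrix}\right)$ with $c_s\ne 0$ and factor it by hand as $\widetilde{\gamma_s}\cdot\left(\begin{smallmatrix}a_s&\widetilde{b_s}\\0&a_s^{-1}\end{smallmatrix}\right)$ with $\widetilde{\gamma_s}\in\Gamma_{0}(\mathfrak I,\ai\p^{\ell}\mathfrak b^{2\ell})=\Gamma_{0}(\mathfrak I,\eta^{\ell}\ai)$; only then, after conjugating by $\left(\begin{smallmatrix}\eta^{-\ell}&0\\0&1\end{smallmatrix}\right)$ to land in $\Gamma_{0}(\mathfrak I,\ai)$, can the automorphy of $f_{\ai}$ absorb $\widetilde{\gamma_s}$. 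This factorization also pins down the asserted valuation $v_{\p}(a_s)=s+\ell\,v_{\p}(\mathfrak b)$ (via Lemma~3.1, using that $a_s\in\p^{s}\mathfrak b^{\ell}$ and adjusting $d_s$ if necessary). Your proposal correctly identifies the obstacle in its last paragraph but does not carry out this factorization; without it the argument is incomplete.

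A minor caution: the symbol $\gamma_{\beta,s}$ you invoke from \eqref{eq: tpl en a} satisfies $\left(\begin{smallmatrix}\pi_{\p}^{-s}&\pi_{\ai}\beta\\0&\pi_{\p}^{s}\end{smallmatrix}\right)=\gamma_{\beta,s}k_s$, while the paper's $\gamma_{\beta,s}$ in the proof of the proposition is defined by a \emph{different} identity (with $\pi_{\mathfrak b}^{\pm\ell}$ inserted). If you really want to start from \eqref{eq: tpl en a}, you must compose your $\gamma_{\beta,s}$ with a separately chosen $\gamma_{\ai\p^{\ell}\to\ai\p^{\ell}\mathfrak b^{2\ell}}$, and then the reindexing $\beta\leftrightarrow\alpha_s$ must be checked for \emph{that} composite; your cardinality argument $N(\p)^{\ell-s}$ is fine, but the explicit bijection depends on the choices.
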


\begin{proof} 
 We have that 
	$\left(\left[\pi_{\mathfrak{b}}^{\ell}\right] T_{\p^{\ell}} f\right)_{\ai}= \chi (\pi_{\mathfrak b}^\ell)(T_{\p^{\ell}} f)_{\ai}$, 
	hence we must compute
	\begin{align}
	\chi (\pi_{\mathfrak b}^\ell)(T_{\p^{\ell}} f)_{\ai}(g_{\infty}) &= 
	\sum_{s = 0}^{\ell}\sum_{\beta \in \mathcal{O}_{\mathfrak{p}}/  \pi_{\mathfrak{p}}^{\ell - s}\mathcal{O}_{\mathfrak{p}}}\!\!\!\!\!\overline{\chi(\pi_{\ai})}f\left( \left(\begin{matrix}
	\pi_{\ai} & 0\\
	0 & 1
	\end{matrix}\right)\left(\begin{matrix}
	\pi_{\p}^{\ell - s} & \beta \\
	0 & \pi_{\p}^{s}
	\end{matrix}\right)\left(\begin{matrix}
	\pi_{\mathfrak{b}}^{\ell} & 0\\
	0 & \pi_{\mathfrak{b}}^{\ell}
	\end{matrix}\right)g_{\infty}\right).
	\end{align}
Now 
\begin{align}\notag
\left(\begin{matrix}
\pi_{\ai} & 0\\
0 & 1
\end{matrix}\right)\left(\begin{matrix}
\pi_{\p}^{\ell - s} & \beta \\
0 & \pi_{\p}^{s}
\end{matrix}\right)\left(\begin{matrix}
\pi_{\mathfrak{b}}^\ell & 0\\
0 & \pi_{\mathfrak{b}}^{\ell}
\end{matrix}\right)
&=\left(\begin{matrix}
\pi_{\mathfrak{b}}^{-\ell}\pi_{\p}^{- s} & \pi_{\ai}\pi_{\mathfrak{b}}^{\ell}\beta\\
0 & \pi_{\p}^{s}\pi_{\mathfrak{b}}^{\ell}
\end{matrix}\right) \left(\begin{matrix}
\pi_{\mathfrak{b}}^{2\ell}\pi_{\p}^{\ell}\pi_{\ai} & 0\\
0 & 1
\end{matrix}\right)\\
&= \notag  i(\gamma_{\beta,s})\left(\begin{matrix}
\pi_{\mathfrak{b}}^{2\ell}\pi_{\p}^{\ell}\pi_{\ai} & 0\\
0 & 1
\end{matrix}\right) i_{\infty}(\gamma_{\beta,s}^{-1}) {k_{s}} 
\\&=  
 i(\gamma_{\beta,s})i\left(\left(\begin{matrix}
\eta_{\p\mathfrak{b}^{2}}^{\ell} & 0\\
0 & 1
\end{matrix}\right)\right)\left(\begin{matrix}
\pi_{\ai} & 0\\
0 & 1
\end{matrix}\right)i_{\infty}\left(\left(\begin{matrix}
\eta_{\p\mathfrak{b}^{2}}^{\ell} & 0\\
0 & 1
\end{matrix}\right)^{-1}\gamma_{\beta,s}^{-1}\right)k_{s}.\label{eq: st app}
\end{align}

We have used \eqref{strongapproxSL2} for  $\left(\begin{matrix}
\pi_{\mathfrak{b}}^{-\ell}\pi_{\p}^{- s} & \pi_{\ai}\pi_{\mathfrak{b}}^{\ell}\beta\\
0 & \pi_{\p}^{s}\pi_{\mathfrak{b}}^{\ell}
\end{matrix}\right)$ with $\gamma_{\beta,s} \in \SL_{2}(F)$ and  $k_{s}\in K_{0}(\mathfrak{I})$,
and also the fact that $\mathfrak{b}^{2\ell}\p^{\ell} = \eta_{\p\mathfrak{b}^{2}}^{\ell}\mathcal{O}_F $. We  pick 
$i_f( \eta_{\p\mathfrak{b}^{2}}^{\ell})$ as an idele associated to $\mathfrak{b}^{2\ell}\p^{\ell}$.  
 
Now we look carefully at the element  $\gamma_{\beta,s}^{-1} \in \SL_{2}(F)$  appearing in \eqref{eq: st app}.
 We distinguish two cases: when $\p^{s}\mathfrak{b}^{\ell}$ is a principal ideal and  when it is not.
\

If $s \in \lbrace 0, 1, \ldots, \ell \rbrace$ is such that  $\p^{s}\mathfrak{b}^{\ell}$  \emph{is principal}, then we can take $\gamma_{\beta, s}$ a diagonal matrix
\begin{equation}\label{eq: matrices diagonales}
\left( \begin{matrix}
\pi_{\mathfrak{b}}^{-\ell}\pi_{\p}^{-s} & \pi_{\ai}\pi_{\mathfrak{b}}^{\ell}\beta \\
0 & \pi_{\p}^{s}\pi_{\mathfrak{b}}^{\ell}
\end{matrix} \right) = i_f\left(\left(\begin{matrix}
a_{s}^{-1} & 0\\
0 & a_{s}
\end{matrix} \right)\right) \left( \begin{matrix}
1 & a_{s} \pi_{\mathfrak{a}}\pi_{\mathfrak{b}}^{\ell}\beta\\
0 & 1
\end{matrix}\right)
\end{equation}
where $a_{s} \in \mathcal{O}_{F}$ is such that $ a_{s}\mathcal O_F  = \p^{s}\mathfrak{b}^{\ell}$. That is,  if $\p^{s}\mathfrak{b}^{\ell}$ is principal, we have that \begin{align}\label{eq: diagonal para s principal}
&\gamma_{\beta,s}^{-1} = \left( \begin{matrix}
a_{s} & 0\\
0 & a_{s}^{-1}
\end{matrix}\right)\in \SL_{2}(F) \;\;\; \textrm{ and } \;\;\;\;
k_{s} = \left( \begin{matrix}
1 & a_{s} \pi_{\mathfrak{a}}\pi_{\mathfrak{b}}^{\ell}\beta\\
0 & 1
\end{matrix}\right) \in K_{0}(\mathfrak{I}).
\end{align}

In the case when  $\p^{s}\mathfrak{b}^{\ell}$ \emph{is not principal},  
let $\left(\begin{smallmatrix}
a & b\\
c & d
\end{smallmatrix} \right) \in K_{0}(\mathfrak{I})$. 
By \eqref{flecha} applied to $\gamma_{\beta,s}$ we have
\begin{align*}
&\left(\begin{matrix}
\pi_{\mathfrak{b}}^{2\ell}\pi_{\p}^{\ell}\pi_{\ai} & 0\\
0 & 1
\end{matrix} \right) 
\left(\begin{matrix}
a & b\\
c & d
\end{matrix} \right) 
\left(\begin{matrix}
\pi_{\mathfrak{b}}^{-\ell}\pi_{\p}^{s-\ell}\pi_{\ai}^{-1} & -\beta \pi_{\mathfrak{b}}^{-\ell}\pi_{\p}^{-\ell}\\
0 & \pi_{\p}^{- s}\pi_{\mathfrak{b}}^{-\ell} \end{matrix} \right)
=\left(\begin{matrix}
\pi_{\mathfrak{b}}^{\ell}\pi_{\p}^{s}a & \pi_{\mathfrak{b}}^{\ell}\pi_{\p}^{\ell - s}\pi_{\ai}b\\
\pi_{\ai}^{-1}\pi_{\mathfrak{b}}^{-\ell}\pi_{\p}^{s - \ell} c & \pi_{\p}^{- s}\pi_{\mathfrak{b}}^{-\ell} d
\end{matrix} \right)\left(\begin{matrix}
1 & -\beta \pi_{\p}^{- s}\pi_{\ai}\\
0 & 1 \end{matrix} \right) .
\end{align*}
and when we intersect with $i_{f}(\GL_{2}(F))$ we have  
\begin{align*}
\left(\begin{matrix}
\pi_{\mathfrak{b}}^{\ell}\pi_{\p}^{s}a & \pi_{\mathfrak{b}}^{\ell}\pi_{\p}^{\ell - s}\pi_{\ai}b\\
\pi_{\ai}^{-1}\pi_{\mathfrak{b}}^{-\ell}\pi_{\p}^{s - \ell} c & \pi_{\p}^{- s}\pi_{\mathfrak{b}}^{-\ell}d
\end{matrix} \right) \cap i_{f}(\GL_{2}(F)) \in i_f \left( \Gamma ( \ai \p^{ \ell - 2s} \mapsto \ai \p^{ \ell} \mathfrak{b}^{2\ell})\right).
\end{align*}
On the other hand,
\begin{align*}
\left(\begin{matrix}
1 & -\beta \pi_{\p}^{- s}\pi_{\ai}\\
0 & 1 \end{matrix} \right)  \cap i_{f}(\GL_{2}(F)) \in \Gamma_{N} (\mathfrak{I}, \ai \p^{- s}/\ai \p^{ \ell - 2s}),
\end{align*}
since $\beta \in \mathcal{O}_{\p}/ \pi_{\p}^{\ell - s}\mathcal{O}_{\mathfrak{p}}$ and  then $\beta \pi_{\p}^{- s}\pi_{\ai} \cap i(F) \in i_f\left(\ai \p^{- s}/\ai \p^{ \ell - 2s} \right)$. 
We thus have a decomposition
 \begin{equation}\label{gamma beta s}  i_{\infty}(\gamma_{\beta,s}^{-1}) =  \left(\begin{matrix}
a_s & b_s\\
c_s & d_s
\end{matrix} \right)  \left( \begin{matrix}
1 & \alpha_s\\
0 & 1
\end{matrix}\right),
\end{equation}
where 
$ \left(\begin{smallmatrix}
a_s & b_s\\
c_s & d_s
\end{smallmatrix} \right) \in i_f \left( \Gamma ( \ai \p^{ \ell - 2s} \mapsto \ai \p^{ \ell} \mathfrak{b}^{2\ell})\right)$,
$ a_sd_s-b_sc_s = u_s \in \mathcal{O}_{F}^{\times}$, 
and $\alpha_s \in \ai \p^{- s}/\ai \p^{ \ell - 2s}$.  Furthermore, by Lemma 3.1, $a_s \in \p^{s}\mathfrak{b}^{\ell}\subset \mathcal O_F$ and we may assume, by modifying $d_s$ if necessary, that   
\begin{align}\label{a_s, b_s}
&v_{\p}(a_s) = s + \ell v_{\p}(\mathfrak{b}).
\end{align}

Now we claim that we can write 
$\left(\begin{smallmatrix}
a_s & b_s\\
c_s & d_s
\end{smallmatrix} \right)$ as a product of a matrix in $\Gamma_0(\mathfrak{I}, \ai\p^{\ell}\mathfrak{b}^{2\ell})$ and an upper triangular matrix.
Indeed, if $c_s = 0$ there is nothing to prove. If   $c_s \neq 0$, then $a_s \neq 0$,  since if $a_s = 0$ then   $b_sc_s = u$,  contradicting that $b_sc_s \in \mathfrak{I}$. 
As $a_s \in \p^{s}\mathfrak{b}^{\ell}$ and $c_s \in \ai^{-1}\mathfrak{b}^{-\ell}\p^{s - \ell}\mathfrak{I}$, there exists  $i \in \mathfrak{I}$ such that $ia_sc_s^{-1} \in \ai \p^{\ell} \mathfrak{b}^{2\ell}$. Thus we may decompose 
\begin{align} \label{eq:as}
 \left(\begin{matrix}
a_s & b_s\\
c_s & d_s
\end{matrix} \right) = \left(\begin{matrix}
1 & a_sc_s^{-1}i\\
c_sa_s^{-1} & i+u_s
\end{matrix} \right)\left(\begin{matrix}
a_s & -ic_s^{-1} + b_s\\
0 & a_s^{-1}
\end{matrix} \right).
\end{align} 
Now since $a_sc_s^{-1}i \in \ai \p^{\ell}\mathfrak{b}^{2\ell}$, then $c_sa_s^{-1} \in \ai^{-1} \p^{-\ell}\mathfrak{b}^{-2\ell}\mathfrak{I}$, and since $i_s + u_s \in \mathcal{O}_{F}$, we have that $\widetilde{\gamma_s}:=\left(\begin{smallmatrix}
1 & a_sc_s^{-1}i\\
c_sa_s^{-1} & i_s+u_s
\end{smallmatrix} \right) \in \Gamma_{0}(\mathfrak{I}, \ai \p^{\ell}\mathfrak{b}^{2\ell})$.

Substituting  \eqref{eq: st app} in the argument of $(\left[\pi_{\mathfrak{b}}^{\ell}\right] T_{\p^{\ell}} f)_{\ai}= \chi (\pi_{\mathfrak b}^\ell)(T_{\p^{\ell}} f)_{\ai}$, 
 using   \eqref{eq:zf}  and in light of \eqref{eq: diagonal para s principal}, \eqref{gamma beta s} and \eqref{eq:as}, 
 we obtain that 
 \begin{align*}
 \chi (\pi_{\mathfrak b}^\ell)(T_{\p^{\ell}} f)_{\ai}(g_\infty) &= 
 \sum_{s = 0}^{\ell}\sum_{\beta \in \mathcal{O}_{\mathfrak{p}}/  \pi_{\mathfrak{p}}^{\ell - s}\mathcal{O}_{\mathfrak{p}}}\!\!\!\!\!\overline{\chi(\pi_{\ai})}f\left( \left(\begin{matrix}
\pi_{\ai} & 0\\
0 & 1
\end{matrix}\right)\left(\begin{matrix}
\pi_{\p}^{\ell - s} & \beta \\
0 & \pi_{\p}^{s}
\end{matrix}\right)\left(\begin{matrix}
\pi_{\mathfrak{b}}^{\ell} & 0\\
0 & \pi_{\mathfrak{b}}^{\ell}
\end{matrix}\right)g_{\infty}\right) \\
&= 
\sum_{s = 0}^{\ell}\sum_{\beta \in \mathcal{O}_{\mathfrak{p}}/  \pi_{\mathfrak{p}}^{\ell - s}\mathcal{O}_{\mathfrak{p}}}\!\!\!\!\overline{\chi(\pi_{\ai})}f\left( \left(\begin{matrix}
\pi_{\ai} & 0\\
0 & 1
\end{matrix}\right)i_{\infty}\left(\gamma_{\beta,s}\left(\begin{matrix}
\eta_{\p\mathfrak{b}^{2}}^{\ell} & 0\\
0 & 1
\end{matrix}\right)\!\!\right)^{-1}g_{\infty} \right)\chi_f(k_{s})\\
&= 
\sum_{s = 0}^{\ell}\sum_{\beta \in \mathcal{O}_{\mathfrak{p}}/  \pi_{\mathfrak{p}}^{\ell - s}\mathcal{O}_{\mathfrak{p}}}\!\!\!\!f_{\ai}\left(i_{\infty}\left(\gamma_{\beta,s}\left(\begin{matrix}
\eta_{\p\mathfrak{b}^{2}}^{\ell} & 0\\
0 & 1
\end{matrix}\right)\!\!\right)^{-1}g_{\infty} \right)\chi_f(k_{s})\\
&= 
\sum_{s = 0}^{\ell} \! \sum_{\alpha_s \in \ai \p^{- s} / \ai\p^{\ell - 2s}}\!\!\!\!\!\!\! f_{\ai}\left( i_{\infty}\left(\!\!\left(\begin{matrix}
\eta_{\p\mathfrak{b}^{2}}^{\ell} & 0\\
0 & 1
\end{matrix}\right)\!\!\right)^{-1}\!\!\!\!\!\widetilde{\gamma_s} \left(\begin{matrix}
a_s & -ic_s^{-1} + b_s\\
0 & a_s^{-1}
\end{matrix} \right) \left(\begin{matrix}
1 & \alpha_s\\
0 & 1
\end{matrix}\right) g_{\infty} \right)\chi_f(k_{s})\\
& = 
\sum_{s = 0}^{\ell} \sum_{\alpha_s \in \ai \p^{- s} / \ai\p^{\ell - 2s}}\!\!\! f_{\ai}\left(  \left(\begin{matrix}
\eta_{\p\mathfrak{b}^{2}}^{-\ell}a_s & \eta_{\p\mathfrak{b}^{2}}^{-\ell}(-ic_s^{-1} + b_s)\\
0 & a_s^{-1}
\end{matrix} \right) \left(\begin{matrix}
1 & \alpha_s\\
0 & 1
\end{matrix}\right) g_{\infty} \right)\chi_f(k_{s}), 
\end{align*}
 Now setting $\widetilde b_s = -ic_s^{-1} + b_s \in \mathcal O_F$, the proposition follows.

\end{proof}

 \section{Fourier terms and  Hecke operators}
The  goal of this section is to prove  results on the Fourier terms of the components of $T_{\mathfrak{p}^\ell} f$.

\begin{proposition}\label{Fourier terms}
In the notation of Proposition~\ref{relacion de los tpl}, the function $f_{\mathfrak a,s}$ in \eqref{eq:fs} is left $\Gamma(\mathfrak{I}, \ai)_{N}$--invariant and its Fourier   terms are given by
\begin{equation}\notag
F_{\ai,r}f_{\mathfrak a,s}\!(g_{\infty})\! = \!\!\!\! \begin{array}{ll}
 \left\lbrace \!\!\! 
    \begin{array}{ll}
      \frac{N(\p)^{s} N(\eta_{\p \mathfrak{b}^{2}}^{\ell})}{N(a_{s}^{2})} F_{\ai, {r\eta_{\p\mathfrak{b}^{2}}^{\ell}}/{a_{s}^{2}}}\,f_{\ai}\left(\!\!\left( \begin{matrix}
      a_{s}\eta_{\p\mathfrak{b}^{2}}^{-\ell} & \widetilde{b_{s}}\eta_{\p\mathfrak{b}^{2}}^{-\ell}\\
      0 & a_{s}^{-1}
      \end{matrix}\right)\!\!g_{\infty}\!\!\right) \textrm{if } r \in \p^{s}\ai^{-1}\mathfrak{d}^{-1},  \\ 0  \; \textrm{ otherwise}.
    \end{array}\right.
\end{array}
\end{equation} 
 where as in  \eqref{eq:fs}, $\widetilde{b_{s}} \in \mathcal{O}_F$ and $a_s \in \p^s \mathfrak{b}^\ell$ satisfies $v_{\p}(a_s) = s + \ell v_{\p}(\mathfrak{b})$, for any $\,0 \le s \le \ell$. 
Furthermore, $ \p\mathfrak{b}^2 = \eta_{\p\mathfrak{b}^2}\mathcal{O}_F $ with $\eta_{\p\mathfrak{b}^2}$ totally positive.
\end{proposition}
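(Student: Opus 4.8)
The function $f_{\mathfrak a,s}$ is, by \eqref{eq:fs}, a finite sum over $\alpha_s \in \ai\p^{-s}/\ai\p^{\ell-2s}$ of right-translates of $f_{\ai}$ by $\left(\begin{smallmatrix} \eta_{\p\mathfrak b^2}^{-\ell}a_s & \eta_{\p\mathfrak b^2}^{-\ell}\widetilde b_s\\ 0 & a_s^{-1}\end{smallmatrix}\right)n(\alpha_s)$. The plan is to compute its $(\ai,r)$-Fourier term directly from the definition $F_{\ai,r}h(g)=\frac{1}{\Vol(\R^d/\ai)}\int_{\R^d/\ai}e^{-2\pi i S(rx)}h(n(x)g)\,dx$. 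First I would verify the $\Gamma(\mathfrak I,\ai)_N$-invariance: since $\Gamma(\mathfrak I,\ai)_N = N(F)\cap\Gamma_0(\mathfrak I,\ai)$ consists of matrices $n(\mu)$ with $\mu\in\ai$, left multiplication by $n(\mu)$ permutes the set $\{n(x) g_\infty\}$ appropriately; one checks $n(\mu)\left(\begin{smallmatrix}\eta^{-\ell}a_s & \eta^{-\ell}\widetilde b_s\\0&a_s^{-1}\end{smallmatrix}\right)n(\alpha_s) = \left(\begin{smallmatrix}\eta^{-\ell}a_s & \eta^{-\ell}\widetilde b_s\\0&a_s^{-1}\end{smallmatrix}\right)n(\alpha_s + a_s^{-2}\eta^{\ell}\mu)g_\infty$, and since $a_s^{-2}\eta^{\ell}\mu$ ranges over $\p^{-2s}\mathfrak b^{-2\ell}\cdot\eta^{\ell}\cdot\ai = \ai\p^{-2s}$ — wait, more precisely $a_s^{-2}\eta_{\p\mathfrak b^2}^{\ell}\mu \in \p^{-2s}\mathfrak b^{-2\ell}\cdot\p^{\ell}\mathfrak b^{2\ell}\cdot\ai = \ai\p^{\ell-2s}$, so it lies in the kernel of the sum over $\alpha_s\in\ai\p^{-s}/\ai\p^{\ell-2s}$ only up to reindexing; in fact it shifts $\alpha_s$ by an element of $\ai\p^{\ell-2s}$, hence fixes the coset, giving invariance.

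Next, the core computation. Write $h(g_\infty) = f_{\ai}\big(\left(\begin{smallmatrix}\eta^{-\ell}a_s & \eta^{-\ell}\widetilde b_s\\0&a_s^{-1}\end{smallmatrix}\right)g_\infty\big)$ so that $f_{\mathfrak a,s}(g_\infty) = \sum_{\alpha_s} h(n(\alpha_s)g_\infty)$. Since $\left(\begin{smallmatrix}\eta^{-\ell}a_s & \eta^{-\ell}\widetilde b_s\\0&a_s^{-1}\end{smallmatrix}\right)n(x) = n(a_s^2\eta^{-\ell}x)\left(\begin{smallmatrix}\eta^{-\ell}a_s & \eta^{-\ell}\widetilde b_s\\0&a_s^{-1}\end{smallmatrix}\right)$, a change of variables converts the Fourier integral of $h$ against $e^{-2\pi i S(rx)}$ into the Fourier integral of $f_{\ai}$ against $e^{-2\pi i S(r a_s^{-2}\eta^{\ell}\cdot)}$, i.e. $F_{\ai,r}h = \frac{N(\p^s)N(\eta^\ell)}{N(a_s^2)}F_{\ai,\,r\eta^{\ell}/a_s^2}f_{\ai}\big(\left(\begin{smallmatrix}\eta^{-\ell}a_s & \eta^{-\ell}\widetilde b_s\\0&a_s^{-1}\end{smallmatrix}\right)g_\infty\big)$, the Jacobian factor $|a_s^2\eta^{-\ell}|^{-1} = N(a_s^2)/N(\eta^\ell)$ combining with the period-lattice rescaling; using $v_\p(a_s) = s+\ell v_\p(\mathfrak b)$ and $\p\mathfrak b^2 = \eta\mathcal O_F$ one sees the constant is $\tfrac{N(\p)^s N(\eta_{\p\mathfrak b^2}^\ell)}{N(a_s^2)}$. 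Then summing over $\alpha_s\in\ai\p^{-s}/\ai\p^{\ell-2s}$ and using $h(n(\alpha_s)g) = \sum_{r'}\psi_\infty(r'\alpha_s)F_{\ai,r'}h(g)$: the finite sum $\sum_{\alpha_s}\psi_\infty(r'\alpha_s)$ is an orthogonality sum which vanishes unless $r'\alpha_s\in\Z$ for all $\alpha_s$, i.e. unless $r'\in(\ai\p^{-s})^{\vee}$ relative to the pairing — concretely unless $r\eta^\ell/a_s^2 \in \p^{s-\ell}\ai^{-1}\mathfrak d^{-1}$, which (tracking valuations at $\p$ and away from $\p$) is equivalent to $r\in\p^{s}\ai^{-1}\mathfrak d^{-1}$, in which case it equals the index $[\ai\p^{-s}:\ai\p^{\ell-2s}] = N(\p^{\ell-s})$. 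Comparing $N(\p^{\ell-s})$ against the normalization $\Vol(\R^d/\ai\p^{-s})/\Vol(\R^d/\ai)$ hidden in the two Fourier-term normalizations produces exactly the stated factor and the case distinction.

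The main obstacle I anticipate is bookkeeping the normalization constants correctly: there are three competing $N(\cdot)$ factors — the Jacobian from conjugating the upper-triangular matrix past $n(x)$, the ratio of the volume normalizations $\Vol(\R^d/\ai)$ versus $\Vol(\R^d/\ai\p^{-s})$ implicit when rewriting $F_{\ai,r}$ of a sum of $n(\alpha_s)$-translates, and the cardinality $N(\p^{\ell-s})$ of the summation set — and they must be shown to collapse to the single coefficient $\frac{N(\p)^s N(\eta_{\p\mathfrak b^2}^\ell)}{N(a_s^2)}$. A secondary care point is the precise translation of "$r\eta_{\p\mathfrak b^2}^\ell/a_s^2$ pairs integrally with $\ai\p^{-s}$" into the clean condition "$r\in\p^s\ai^{-1}\mathfrak d^{-1}$"; here one uses that $v_\q(\eta^\ell/a_s^2)=0$ for every prime $\q\ne\p$ (since $a_s^2$ and $\eta^\ell=\p^\ell\mathfrak b^{2\ell}$ have the same $\q$-valuation, $\q\nmid\p$) and $v_\p(\eta^\ell/a_s^2) = \ell - 2(s+\ell v_\p(\mathfrak b)) + \ell v_\p(\mathfrak b)\cdot 2 = \ell - 2s$, so the pairing condition reduces purely to the $\p$-component and yields $r\in\p^s\ai^{-1}\mathfrak d^{-1}$.
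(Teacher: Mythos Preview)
Your plan uses the same two ingredients as the paper---the conjugation identity $Mn(x)=n(a_s^2\eta^{-\ell}x)M$ for the upper-triangular matrix $M$ and an orthogonality sum over a finite quotient---so the approach is essentially the paper's. The difficulty you flag is real, and it stems from the order in which you carry out the two steps.

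The snag is that the single summand $h(g)=f_{\ai}(Mg)$ is \emph{not} $\ai$-periodic in the $n(\cdot)$-direction: from $Mn(\mu)=n(a_s^2\eta^{-\ell}\mu)M$ and the $\ai$-periodicity of $f_{\ai}$, one finds that $h$ is periodic for the lattice $a_s^{-2}\eta^{\ell}\ai=\ai\p^{\ell-2s}$, not for $\ai$. Hence ``$F_{\ai,r}h$'' is not defined, and the expansion $h(n(\alpha_s)g)=\sum_{r'}\psi_\infty(r'\alpha_s)F_{\ai,r'}h(g)$ you propose does not hold. This is also why your accounting of the factors $N(\p)^s$ and $N(\p)^{\ell-s}$ does not close: the $N(\p)^s$ you insert in the formula for $F_{\ai,r}h$ has no source at that stage (the change of variable alone contributes only the Jacobian $N(\eta^\ell)/N(a_s^2)$), and the later orthogonality sum contributes $N(\p)^{\ell-s}$, which then has nothing to cancel against.

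The paper avoids this by reversing the order. First it uses that the \emph{full} sum $f_{\mathfrak a,s}$ is $\ai\p^{-s}$-periodic (right translation by $n(\mu)$, $\mu\in\ai\p^{-s}$, merely permutes the cosets $\alpha_s\in\ai\p^{-s}/\ai\p^{\ell-2s}$), and breaks $\int_{\R^d/\ai}$ into translates of $\int_{\R^d/\ai\p^{-s}}$; the resulting character sum $\sum_{\alpha'\in\ai\p^{-s}/\ai}e^{-2\pi iS(r\alpha')}$ simultaneously produces the factor $N(\p)^s$ and the support condition $r\in\p^{s}\ai^{-1}\mathfrak d^{-1}$. Only then does it unfold the $\alpha_s$-sum into the integral (now over $\R^d/\ai\p^{\ell-2s}$) and perform the conjugation and change of variable $x\mapsto x\eta^{\ell}a_s^{-2}$, which lands the integral back on $\R^d/\ai$ and contributes the remaining factor $N(\eta_{\p\mathfrak b^2}^{\ell})/N(a_s^2)$. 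With this order the three normalization factors you worried about separate cleanly: $N(\p)^s$ from the first orthogonality, nothing extra from the unfolding, and $N(\eta^\ell)/N(a_s^2)$ from the Jacobian.
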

\begin{proof}
 The set  
$$\bigsqcup_{\alpha_s\in \ai \p^{-s}/\ai\p^{\ell - 2s}} \Gamma_{0}(\mathfrak{I}, \ai)\left( \begin{matrix}
      a_{s}\eta_{\p\mathfrak{b}^{2}}^{-\ell} & \widetilde{b_{s}}\eta_{\p\mathfrak{b}^{2}}^{-\ell}\\
      0 & a_{s}^{-1}
      \end{matrix}\right)\left( \begin{matrix}
      1 & \alpha_s\\
      0 & 1
      \end{matrix}\right)$$ 
is right  invariant by the  group  $\Gamma_{N}(\mathfrak{I}, \ai\p^{-s}) = \lbrace n(\alpha)  :  \alpha \in \ai \p^{-s} \rbrace$ and this  group  contains  $\Gamma_{N}(\mathfrak{I}, \ai)= \lbrace n(\alpha) \; : \; \alpha \in \ai \rbrace$ since $ \ai \subseteq \ai\p^{-s}$. This implies that   $f_{\mathfrak a,s}$ is left $\Gamma_{N}(\mathfrak{I}, \ai)$--invariant.

We now determine the   Fourier expansion of $f_{\mathfrak a,s}$. For any  ideal $\mathfrak{c}$ of $F$, set $v(\mathfrak{c}) := \Vol (\R^{d}/ \mathfrak{c})$. 
\begin{align*}
F_{\ai,r}f_{\mathfrak a,s}(g_{\infty})  :&= \frac{1}{v(\ai)} \int_{\R^{d}/\ai} e^{-2\pi i S(rx)} f_{\mathfrak a,s}(n(x)g_{\infty})dx \\
&=  \frac{1}{v(\ai)} \int_{\R^{d}/\ai\p^{-s}} \sum_{\alpha'_s\in \ai \p^{-s}/\ai}  e^{-2\pi i S(r(x + \alpha'_s))} f_{\mathfrak a,s}(n(x+ \alpha'_s)g_{\infty})dx \\
&= \frac{1}{v(\ai)}\bigg( \sum_{\alpha'_s\in \ai\p^{-s}/\ai}  e^{-2\pi i S(r\alpha'_s)}\bigg) \int_{\R^{d}/\ai\p^{-s}}\!\!\!\!\!\! e^{-2\pi i S(rx)}
f_{\mathfrak a,s}(n(x)g_{\infty})dx. 
\end{align*} 
\
Now  \begin{equation} \notag
 \sum_{\alpha'_s\in \ai\p^{-s}/\ai}  e^{-2\pi i S(r \alpha'_s)} = \begin{array}{ll}
 \Bigg\lbrace \! 
    \begin{array}{ll}
      N( \p^{s}) \textrm{ if } r \in \p^{s}\ai ^{-1} \mathfrak{d}^{-1}\,, \\ 0  \; \textrm{ otherwise}.
    \end{array}
\end{array}.
\end{equation} 



Thus, if we let  $r \in \p^{s}  \ai ^{-1}\mathfrak{d}^{-1} $, then
\begin{align*}
F_{\ai,r}f_{\mathfrak a,s}(g_{\infty})  :&= \frac{N(\p)^{s} }{v(\ai)} \int_{\R^{d}/\ai\p^{ - s}}\!\!\!\!\!\!\! e^{-2\pi i S(rx)}   f_{\mathfrak a,s}(n(x )g_{\infty})dx  \\
&= \frac{N(\p)^{s} }{v(\ai)} \int_{\R^{d}/\ai\p^{ - s}}\!\!\!\!\!\!\!\!\!\!\! e^{-2\pi i S(rx)}\!\!\! \sum_{\alpha_s \in \ai\p^{- s}/\ai\p^{\ell - 2s}} f_{\ai}\left( \left(\begin{matrix}
\eta_{\p \mathfrak{b}^{2}}^{-\ell}a_{s} & \eta_{\p \mathfrak{b}^{2}}^{-\ell} \widetilde{b_{s}}\\
0 & a_{s}^{-1} 
\end{matrix} \right)  \left(\begin{matrix}
1 & \alpha_s\\
0 & 1
\end{matrix}\right)n(x) g_{\infty} \right)dx \\
&= \frac{N(\p)^{s} }{v(\ai)} \int_{\R^{d}/\ai\p^{\ell - 2s}}e^{-2\pi i S(rx)}f_{\ai}\left( \left(\begin{matrix}
\eta_{\p \mathfrak{b}^{2}}^{-\ell}a_{s} & \eta_{\p \mathfrak{b}^{2}}^{-\ell} \widetilde{b_{s}}\\
0 & a_{s}^{-1} 
\end{matrix} \right)n(x) g_{\infty} \right)dx. \\
\end{align*}
Since 
$\left(\begin{smallmatrix}
\eta_{\p \mathfrak{b}^{2}}^{-\ell}a_{s} & \eta_{\p \mathfrak{b}^{2}}^{-\ell}\widetilde{b_{s}}\\
0 & a_{s}^{-1} 
\end{smallmatrix} \right)\left( \begin{matrix}
1 & x \\
0 & 1 
\end{matrix} \right) = \left( \begin{smallmatrix}
1 & x\eta_{\p \mathfrak{b}^{2}}^{-\ell}a_{s}^{2} \\
0 & 1 
\end{smallmatrix} \right)\left( \begin{smallmatrix}
 \eta_{\p\mathfrak{b}^{2}}^{-\ell}a_{s} & \eta_{\p\mathfrak{b}^{2}}^{-\ell} \widetilde {b_{s}}\\
0 & a_{s}^{-1}
\end{smallmatrix} \right)
$
then \begin{align*}
 &F_{\ai,r}f_{\mathfrak a,s}(g_{\infty}) = \frac{N(\p)^{s} }{v(\ai)} \int_{\R^{d}/\ai\p^{\ell - 2s}}\!\!\!\!\!\!\!\!\!\!\!\!e^{-2\pi i S(rx)}f_{\ai}\left(n(x\eta_{\p \mathfrak{b}^{2}}^{-\ell}a_{s}^{2}) \left(\begin{matrix}
\eta_{\p \mathfrak{b}^{2}}^{-\ell}a_{s} & \eta_{\p \mathfrak{b}^{2}}^{-\ell}\widetilde{b_{s}}\\
0 & a_{s}^{-1} 
\end{matrix} \right) g_{\infty} \right)dx. \\
\end{align*}
We now make the change of variables   $x \mapsto x\eta_{\p \mathfrak{b}^{2}}^{\ell}a_{s}^{-2}$.
 Since $\eta_{\p \mathfrak{b}^{2}} \in \mathfrak p\mathfrak{b}^{2}$ and $a_{s}\in \p^s \mathfrak{b}^{\ell}$, then 
 $\eta_{\p \mathfrak{b}^{2}}^{-\ell}a_{s}^{2}\ai \p^{\ell - 2s} \subseteq \ai$, hence
\begin{align*}
 F_{\ai,r}f_{\mathfrak a,s}(g_{\infty}) 
&= \frac{N(\p)^{s} }{v(\ai)}\frac{N(\eta_{\p \mathfrak{b}^{2}})^{\ell}}{N(a_{s}^{2})} \int_{\R^{d}/\ai}\!\!\!\!e^{-2\pi i S\left(\frac{rx\eta_{\p \mathfrak{b}^{2}}^{\ell}}{a_{s}^{2}}\right)}f_{\ai}\left(n(x) \left(\begin{matrix}
\eta_{\p \mathfrak{b}^{2}}^{-\ell}a_{s} & \eta_{\p \mathfrak{b}^{2}}^{-\ell}b_{s}\\
0 & a_{s}^{-1} 
\end{matrix} \right) g_{\infty} \right)dx \\
& = \frac{N(\p)^{s} N(\eta_{\p \mathfrak{b}^{2}})^{\ell}}{N(a_{s}^{2})}F_{\ai, {r\eta_{\p\mathfrak{b}^{2}}^{\ell}}/{a_{s}^{2}}}\,f_{\ai}\left(\left( \begin{matrix}
      a_{s}\eta_{\p\mathfrak{b}^{2}}^{-\ell} & \widetilde{b_{s}}\eta_{\p\mathfrak{b}^{2}}^{-\ell}\\
      0 & a_{s}^{-1}
      \end{matrix}\right)g_{\infty}\right).
\end{align*}
\end{proof}

We thus obtain, using the previous proposition and \eqref{relacion de los tpl}: \begin{corollary} \label{coro:fourierterm tpl} Let  $\p$ be a  prime ideal, $\p \nmid \mathfrak{I}$, as in Proposition \ref{Fourier terms}, $\mathfrak b$ an  integral ideal  such that $\p\mathfrak{b}^{2}= \eta_{\mathfrak{pb}^{2}} \mathcal O_F$ with $\eta_{\mathfrak{pb}^{2}}$  totally positive. Assume $r \in \p^{\ell}\ai^{-1}\mathfrak{d}^{-1}.$ Then, if $f\in  \FS_q$,  
\begin{align}\label{eq:FourierTpf}
F_{\ai,r}\left((T_{\p^{\ell}}f)_{\ai}\right)(g_\infty) =  \chi^{-1}(\pi_{\mathfrak{b}^{\ell}})  \sum_{s = 0}^{\ell} \frac{N(\p)^{s} N(\eta_{\p \mathfrak{b}^2})^{\ell}}{N(a_{s}^{2})} \chi_f (k_{s}) F_{\ai, {r \eta_{\p\mathfrak{b}^2}^{\ell}}/{a_{s}^{2}}}\,f_{\ai}\left(\left( \begin{matrix}
      a_{s}\eta_{\p\mathfrak{b}^2}^{-\ell} & \widetilde{b_{s}}\eta_{\p\mathfrak{b}^2}^{-\ell}\\
      0 & a_{s}^{-1}
      \end{matrix}\right)g_{\infty}\right).
\end{align}
\end{corollary}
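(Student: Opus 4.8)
The plan is to obtain \eqref{eq:FourierTpf} by feeding Proposition~\ref{Fourier terms} into Proposition~\ref{relacion de los tpl}, using only that $F_{\ai,r}$ is a linear operator; so there is essentially nothing to do beyond unwinding notation.

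First I would record, from Proposition~\ref{relacion de los tpl} and under the present hypotheses on $\p$ and $\mathfrak b$,
\[
(T_{\p^{\ell}}f)_{\ai}(g_{\infty}) = \chi^{-1}(\pi_{\mathfrak{b}^{\ell}})\sum_{s=0}^{\ell}\chi_f(k_{s})\,f_{\mathfrak a,s}(g_{\infty}),
\]
with $f_{\mathfrak a,s}$ as in \eqref{eq:fs}. Since $F_{\ai,r}$ is defined by integration against the additive character $e^{-2\pi i S(rx)}$ over $\R^{d}/\ai$, it is $\C$-linear in its argument and commutes with the finite sum over $s$, while the scalars $\chi^{-1}(\pi_{\mathfrak{b}^{\ell}})$ and $\chi_f(k_{s})$ pull out. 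Hence
\[
F_{\ai,r}\bigl((T_{\p^{\ell}}f)_{\ai}\bigr)(g_{\infty}) = \chi^{-1}(\pi_{\mathfrak{b}^{\ell}})\sum_{s=0}^{\ell}\chi_f(k_{s})\,F_{\ai,r}f_{\mathfrak a,s}(g_{\infty}).
\]

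Next I would substitute the value of $F_{\ai,r}f_{\mathfrak a,s}$ supplied by Proposition~\ref{Fourier terms}. That formula has two branches: it equals the stated expression with prefactor $N(\p)^{s}N(\eta_{\p\mathfrak{b}^{2}})^{\ell}/N(a_{s}^{2})$ when $r\in\p^{s}\ai^{-1}\mathfrak d^{-1}$, and it vanishes otherwise. The one point to verify is that under the standing hypothesis $r\in\p^{\ell}\ai^{-1}\mathfrak d^{-1}$ the vanishing branch never occurs: indeed $\p^{\ell}\subseteq\p^{s}$ as ideals for every $0\le s\le \ell$, so $r\in\p^{\ell}\ai^{-1}\mathfrak d^{-1}\subseteq\p^{s}\ai^{-1}\mathfrak d^{-1}$ for all $s$ in the range of summation. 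Therefore each term contributes the nonzero branch, and collecting the constants (noting $N(\eta_{\p\mathfrak{b}^{2}}^{\ell})=N(\eta_{\p\mathfrak{b}^{2}})^{\ell}$) gives exactly \eqref{eq:FourierTpf}.

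The proof carries no real obstacle: all the work has already been done in Propositions~\ref{relacion de los tpl} and \ref{Fourier terms}, and the corollary is a bookkeeping combination of the two. The only mild subtlety, worth stating explicitly, is the ideal inclusion $\p^{\ell}\ai^{-1}\mathfrak d^{-1}\subseteq\p^{s}\ai^{-1}\mathfrak d^{-1}$, which guarantees that every summand survives so that the $r$-th Fourier term of $T_{\p^{\ell}}f$ genuinely splits into $\ell+1$ pieces.
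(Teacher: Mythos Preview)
Your proof is correct and matches the paper's own approach: the corollary is stated with the sole justification ``using the previous proposition and \eqref{relacion de los tpl},'' and you have spelled out precisely that combination. Your explicit remark that $r\in\p^{\ell}\ai^{-1}\mathfrak d^{-1}\subseteq\p^{s}\ai^{-1}\mathfrak d^{-1}$ for all $0\le s\le \ell$, so that the nonvanishing branch of Proposition~\ref{Fourier terms} applies to every summand, is a useful clarification that the paper leaves implicit.
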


\

Since all  $T_{\p^{\ell}}$ are bounded selfadjoint operators  commuting with  the  Casimir operators  $C_{j}$,  we may choose an  orthogonal system $\lbrace V_{\varpi} \rbrace$ of  irreducible  subspaces such that   $T_{\p^{\ell}}V_{\varpi} \subset V_{\varpi}$ for every  $\varpi$. Thus,  $T_{\p^{\ell}}$ acts by a scalar   $\widetilde \lambda_{\p^{\ell}}(\varpi)\in \R$ on $V_{\varpi}$.
 Let $\lambda_{\p^{\ell}}(\varpi) := \tfrac{\widetilde \lambda_{\p^{\ell}}(\varpi)}{N(\p)^{\ell/2}}$ denote the normalized Hecke eigenvalue, thus $\lambda_{\p^{\ell}}(\varpi) \in [-2,2]$. We  now relate the normalized  eigenvalues $\lambda_{\p^{\ell}}(\varpi)$ 
 of  $T_{\p^{\ell}}$ with the  Fourier  coefficients of the eigenfunctions $f$ in $V_\varpi$.
\begin{theorem} \label{teo: relacion coeficientes y autovalores} Let $\p$ be a  prime ideal as in Proposition \ref{Fourier terms},  i.e $\p \nmid \mathfrak{I}$, $\p\mathfrak{b}^{2}= \eta_{\mathfrak{pb}^{2}}\mathcal O_F$ with $\eta_{\mathfrak{pb}^{2}}$  totally positive. Let $r \in \p^{\ell}\ai^{-1}\mathfrak{d}^{-1}$. 
Then, for   $f$ in an   irreducible subspace $V_{\varpi}$ and  $\ai$ any fractional ideal, we have
\begin{align*}
\lambda_{\p^{\ell}}(f) c^{\ai, r}(f) =  \chi^{-1}(\pi_{\mathfrak{b}^{\ell}})   \frac{1}{N(\p)^{\ell/2}}\sum_{s = 0}^{ \ell} \frac {N(\p)^{s}}{|N(a_s)|}N(\eta_{\p\mathfrak{b}^{2}}^{\ell/2}) e^{2\pi i S\left(\frac{r\widetilde{b_{s}}}{a_{s}}\right)}\chi_f (k_{s})
c^{\ai,{r\eta_{\p\mathfrak{b}^{2}}^{\ell}}/{a_{s}^{2}}}( f)
\end{align*}\
where $\widetilde{b_{s}} \in \mathcal{O}_F$ and	$a_s \in \p^s \mathfrak{b}^\ell$ satisfies $v_{\p}(a_s) = s + \ell v_{\p}(\mathfrak{b})$, for any $0 \le s \le \ell$.
\end{theorem}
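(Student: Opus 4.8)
The idea is to compare the Fourier–Whittaker expansion of $T_{\p^{\ell}}f$ with that of $f$ itself, evaluated at a convenient point of $\GL_2(\R)^d$, and then read off the relation between the Hecke eigenvalue and the Fourier coefficients. Since $f\in V_\varpi$ and $T_{\p^\ell}$ acts by the scalar $\widetilde\lambda_{\p^\ell}(\varpi)=N(\p)^{\ell/2}\lambda_{\p^\ell}(f)$, we have $(T_{\p^\ell}f)_\ai = N(\p)^{\ell/2}\lambda_{\p^\ell}(f)\,f_\ai$, so applying the Fourier functional $F_{\ai,r}$ gives
\begin{equation}\notag
N(\p)^{\ell/2}\lambda_{\p^\ell}(f)\,F_{\ai,r}f_\ai(g_\infty) = F_{\ai,r}\bigl((T_{\p^\ell}f)_\ai\bigr)(g_\infty).
\end{equation}
The right-hand side is computed in Corollary~\ref{coro:fourierterm tpl}, so the whole matter reduces to making both sides explicit via \eqref{fourierterm} and \eqref{whittaker}.

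First I would use \eqref{fourierterm} on the left: $F_{\ai,r}f_\ai(g_\infty) = c^{\ai,r}(f)\,d^{\ai,r}(q,\nu_\varpi)\,W_q(r,\nu_\varpi;g_\infty)$. On the right, each summand of \eqref{eq:FourierTpf} involves $F_{\ai,\,r\eta_{\p\mathfrak b^2}^\ell/a_s^2}f_\ai$ evaluated at the translated point $\bigl(\begin{smallmatrix} a_s\eta_{\p\mathfrak b^2}^{-\ell}& \widetilde b_s\eta_{\p\mathfrak b^2}^{-\ell}\\ 0& a_s^{-1}\end{smallmatrix}\bigr)g_\infty$; applying \eqref{fourierterm} again turns this into $c^{\ai,\,r\eta_{\p\mathfrak b^2}^\ell/a_s^2}(f)\,d^{\ai,\,r\eta_{\p\mathfrak b^2}^\ell/a_s^2}(q,\nu_\varpi)\,W_q(\cdot;\cdot)$ at the translated point. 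The key computational step is then to evaluate the Whittaker function at the translated argument: writing $\bigl(\begin{smallmatrix} a_s\eta_{\p\mathfrak b^2}^{-\ell}& \widetilde b_s\eta_{\p\mathfrak b^2}^{-\ell}\\ 0& a_s^{-1}\end{smallmatrix}\bigr)$ in Iwasawa form, the upper-triangular unipotent part $n(\widetilde b_s/a_s)$ contributes a factor $\psi_\infty\!\bigl((r\eta_{\p\mathfrak b^2}^\ell/a_s^2)\cdot(\widetilde b_s/a_s)\bigr)$ — which, since $\eta_{\p\mathfrak b^2}^\ell/a_s^2$ accounts for the dilation absorbed in the change of Fourier index, should collapse to $e^{2\pi i S(r\widetilde b_s/a_s)}$ — while the diagonal dilation by $a_s^2\eta_{\p\mathfrak b^2}^{-\ell}$ in the $W_{l,\nu}(4\pi|\cdot|)$ argument exactly cancels the change of index $r\mapsto r\eta_{\p\mathfrak b^2}^\ell/a_s^2$, so that $W_q$ at the translated point equals $W_q(r,\nu_\varpi;g_\infty)$ up to the sign character $\epsilon_\varpi$, which here is trivial because $\eta_{\p\mathfrak b^2}$ is totally positive (this is precisely where the ``square in the narrow class group'' hypothesis is used).

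Next I would handle the normalizing factor $d^{\ai,r}(q,\nu)$: by \eqref{eq:d^r}, $d^{\ai,\,r\eta_{\p\mathfrak b^2}^\ell/a_s^2}(q,\nu)/d^{\ai,r}(q,\nu) = |N(\eta_{\p\mathfrak b^2}^\ell/a_s^2)|^{-1/2} = |N(a_s)|/N(\eta_{\p\mathfrak b^2}^{\ell/2})$ (the Gamma-factors and sign$(r_j)$ are unchanged since $\eta_{\p\mathfrak b^2}^\ell/a_s^2$ is a totally positive unit times something of the right valuation — again using total positivity). Cancelling the common factor $d^{\ai,r}(q,\nu_\varpi)W_q(r,\nu_\varpi;g_\infty)$ from both sides (valid for generic $g_\infty$), dividing through by $N(\p)^{\ell/2}$, and collecting the coefficients $\chi^{-1}(\pi_{\mathfrak b^\ell})$, $N(\p)^s N(\eta_{\p\mathfrak b^2})^\ell/N(a_s^2)$ from \eqref{eq:FourierTpf} together with $|N(a_s)|/N(\eta_{\p\mathfrak b^2}^{\ell/2})$ from the $d$-ratio and the absorbed dilation, one lands exactly on
\begin{equation}\notag
\lambda_{\p^\ell}(f)\,c^{\ai,r}(f) = \chi^{-1}(\pi_{\mathfrak b^\ell})\frac{1}{N(\p)^{\ell/2}}\sum_{s=0}^{\ell}\frac{N(\p)^s}{|N(a_s)|}N(\eta_{\p\mathfrak b^2}^{\ell/2})\,e^{2\pi i S(r\widetilde b_s/a_s)}\,\chi_f(k_s)\,c^{\ai,\,r\eta_{\p\mathfrak b^2}^\ell/a_s^2}(f).
\end{equation}

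**Main obstacle.** The delicate point is the bookkeeping of the archimedean normalizations: tracking how the dilation $a_s^2\eta_{\p\mathfrak b^2}^{-\ell}$ interacts simultaneously with (i) the shift of Fourier index $r\mapsto r\eta_{\p\mathfrak b^2}^\ell/a_s^2$, (ii) the Whittaker argument $4\pi|r_jy_j|$, (iii) the $d$-factor's $\sqrt{|N(\ai r)|}$ and its $\sign(r_j)$-dependent Gamma factors, and (iv) the sign character $\epsilon_\varpi$. Everything hinges on $\eta_{\p\mathfrak b^2}$ being totally positive so that no sign discrepancies appear and the Gamma factors in $d^{\ai,r}$ versus $d^{\ai,\,r\eta/a^2}$ match; making this cancellation airtight, and checking that the identity of real-analytic functions in $g_\infty$ indeed forces the identity of coefficients (i.e. that $W_q(r,\nu;g_\infty)$ is not identically zero), is where the care is needed. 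The rest is the substitution of Corollary~\ref{coro:fourierterm tpl} and routine algebra.
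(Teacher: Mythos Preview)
Your proposal is correct and follows exactly the paper's approach: apply \eqref{fourierterm} to both sides of the identity from Corollary~\ref{coro:fourierterm tpl}, evaluate the Whittaker function at the translated argument $p_sg$ via its Iwasawa decomposition (using total positivity of $\eta_{\p\mathfrak b^2}$ to control the signs in $\epsilon_\varpi$ and the Gamma factors), compute the ratio $d^{\ai,r\eta^\ell/a_s^2}/d^{\ai,r}=|N(a_s)|/N(\eta_{\p\mathfrak b^2}^{\ell/2})$, and cancel the common factor $d^{\ai,r}(q,\nu_\varpi)W_q(r,\nu_\varpi;g)$. One small slip to fix when you write it out: in the \emph{left} Iwasawa factorization $p_s=n(x)\,\mathrm{diag}(a_s\eta_{\p\mathfrak b^2}^{-\ell},a_s^{-1})$ the unipotent argument is $x=a_s\widetilde b_s/\eta_{\p\mathfrak b^2}^\ell$, not $\widetilde b_s/a_s$, and with this correction the exponential is $\psi_\infty\bigl((r\eta_{\p\mathfrak b^2}^\ell/a_s^2)\cdot(a_s\widetilde b_s/\eta_{\p\mathfrak b^2}^\ell)\bigr)=e^{2\pi i S(r\widetilde b_s/a_s)}$ on the nose, rather than via an unexplained ``collapse''.
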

\begin{proof}
Let $q \in \Z^d$. 
We use  \eqref{fourierterm} in the expression in Corollary \ref{coro:fourierterm tpl}. Then
\begin{align}\notag
\widetilde \lambda_{\p^{\ell}}(f)c^{\ai,r}&({f})\,d^{\ai,r}(q,\nu_{\varpi}) W_{q}(r, \nu_{\varpi}; g) = \\\label{eq:carvarpi}
& \chi^{-1}(\pi_{\mathfrak{b}^{\ell}})  \sum_{s = 0}^{\ell} \frac{N(\p^{s})N(\eta_{\p\mathfrak{b}^{2}}^{\ell})}{N(a_{s}^{2})}\chi_f (k_{s}) c^{\ai, {r\eta_{\p\mathfrak{b}^{2}}^{\ell}}/{a_{s}^{2}}}( f) d^{\ai,{r\eta_{\p\mathfrak{b}^{2}}^{\ell}}/{a_{s}^{2}}}(q, \nu_{\varpi})W_{q}\Big({r\eta_{\p\mathfrak{b}^{2}}^{\ell}}/{a_{s}^{2}}, \nu_{\varpi}; p_{s}g\Big)
\end{align}
where $p_{s} = \left( \begin{matrix}
      a_{s}\eta_{\p\mathfrak{b}^{2}}^{-\ell} & \widetilde{b_{s}}\eta_{\p\mathfrak{b}^{2}}^{-\ell}\\
      0 & a_{s}^{-1}
      \end{matrix}\right). $ 
			Now, by   \eqref{whittaker}  
			 we have that 
\begin{equation}\notag W_{q}(r, \nu; \left(\begin{smallmatrix}z_\infty&0\\0& z_\infty\end{smallmatrix}\right) n(x)a(y)k) = {\psi}_{\infty}(rx)\phi_{q}(k)  \epsilon_{\varpi}(\sign(ry)) \prod_{j=1}^{d}W_{q_{j}\sign(r_{j} y_j)/2, \nu_{j}} (4\pi |r_{j}y_{j}|),
\end{equation}
Thus \begin{align*}
W_{q}\Big({r\eta_{\p\mathfrak{b}^{2}}^{\ell}}/{a_{s}^{2}}, \nu_{\varpi}; p_{s}g\Big) &=W_{q}\Big({r\eta_{\p\mathfrak{b}^{2}}^{\ell}}/{a_{s}^{2}}, \nu_{\varpi}; \left(\begin{smallmatrix}1 & {a_s \widetilde{b_s}}/{\eta_{\p \mathfrak{b}^{2}}^\ell}\\
0 & 1\end{smallmatrix}\right)\left(\begin{smallmatrix}a_s \eta_{\p \mathfrak{b}^{2}}^{-\ell} & 0\\
0 & a_s^{-1}\end{smallmatrix}\right)g\Big)\\
&=  W_{q}\Big({r\eta_{\p\mathfrak{b}^{2}}^{\ell}}/{a_{s}^{2}}, \nu_{\varpi}; \left(\begin{smallmatrix}1 &\, {a_s \widetilde{b_s}}/{\eta_{\p \mathfrak{b}^{2}}^\ell}\\
0 & 1\end{smallmatrix}\right)\left(\begin{smallmatrix}a_s^2 \eta_{\p \mathfrak{b}^{2}}^{-\ell} & 0\\
0 & 1\end{smallmatrix}\right)g\Big)\\
&= {\psi}_{\infty}\left(\tfrac{r\eta_{\p \mathfrak{b}^{2}}^\ell}{a_s^2}\tfrac{a_s\widetilde{b_s}}{\eta_{\p\mathfrak{b}^{2}}^\ell}\right)\epsilon_{\varpi}(\sign(\tfrac{r_j\eta_{\mathfrak{pb}^2}^{\ell}y_j}{a_s^2})) \prod_{j=1}^{d}W_{q_{j}\sign\left (\tfrac{r_{j}{\eta_{\p \mathfrak{b}^{2}}^\ell} y_j}{{a_s^2}}\right)/2, \nu_{j}} \!\!\!\!\!\! (4\pi |\tfrac{r_{j}{\eta_{\p \mathfrak{b}^{2}}^\ell}y_j}{{a_s^2}}\tfrac{ a_s^2} {\eta_{\p\mathfrak{b}^{2}}^{\ell}}|) \\
&= e^{2\pi i S\left(\frac{r\widetilde{b_{s}}}{ a_{s}}\right)}  \epsilon_{\varpi}(\sign(\tfrac{r_j\eta_{\mathfrak{pb}^2}^{\ell}y_j}{a_s^2})) \prod_{j=1}^{d}W_{q_{j}\sign\left (r_{j}{\eta_{\p\mathfrak{b}^{2}}^\ell} y_j/{a_s^2}\right)/2, \nu_{j}} (4\pi |r_{j}y_j|)
\end{align*}
 and since $\eta_{\p \mathfrak{b}^{2}}^\ell$ is totally positive, then $\sign\left(r_{j}{\eta_{\p\mathfrak{b}^{2}}^\ell} y_j /{a_s^2}\right)= \sign{(r_j y_j)}$. 

Hence we have the identity
\begin{align}\label{eq:Wreta}
& W_{q}\Big({r\eta_{\p\mathfrak{b}^{2}}^{\ell}}/{a_{s}^{2}}, \nu_{\varpi}; p_{s}g\Big) = e^{2\pi i S\left(\frac{r\widetilde{b_{s}}}{ a_{s}}\right)} W_{q}(r, \nu_{\varpi}; g) \;.
\end{align}
On the other hand,  by \eqref{eq:d^r} we have 
\begin{align}\label{eq:drnu}
d^{\ai,{r\eta_{\p\mathfrak{b}^{2}}^{\ell}}/{a_{s}^{2}}}(q, \nu_{\varpi}) = \frac{|N(a_{s})|}{N(\eta_{\p\mathfrak{b}^{2}}^{\ell})^{1/2}} d^{\ai,r}(q, \nu_{\varpi}).
\end{align}
Substituting \eqref{eq:Wreta} and \eqref{eq:drnu} in \eqref{eq:carvarpi},  the assertion in the theorem follows. 
\end{proof}

\section{Asymptotic distribution of Casimir and Hecke eigenvalues }

\subsection{Plancherel measures}

In this subsection we  recall some facts  on the Plancherel measure.  We have   $\textrm{Pl} = \otimes_j \textrm{Pl}_{\xi_j}$  on $\R^d$, where $\textrm{Pl}_{\xi_j}$ are  the measures on $\R$ given by
\begin{align*}
&\textrm{Pl}_0(f) = \int_{1/4}^{\infty} f(\lambda) \tanh \pi \sqrt{\lambda - \tfrac 1 4} d\lambda + \sum_{b\geq 2, b\equiv0 \textrm{ mod }2} (b-1) f(\tfrac b 2(1-\tfrac b 2)), \\
& \textrm{Pl}_1(f) = \int_{1/4}^{\infty} f(\lambda) \coth \pi \sqrt{\lambda - \tfrac 1 4} d\lambda + \sum_{b\geq 3, b\equiv 1 \textrm{ mod }2} (b-1) f(\tfrac b 2(1-\tfrac b 2)). 
\end{align*}
In particular, $\textrm{Pl}_{\xi_j}$ gives zero measure to the set of \textit{exceptional eigenvalues} in $[0, \tfrac 1 4)$. 

We also need another measure, denoted  $V_{1}$ as in \cite{BM10}, with a  product structure $V_{1}=\otimes_{j} V_{1, \xi_{j}}$ where
\begin{align*}
&\int hdV_{1,0} = \tfrac 1 2 \int_{5/4}^{\infty} h(\lambda)d\lambda + \tfrac 1 2 \int_{0}^{5/4}|\lambda - \tfrac 1 4|^{-1/2} d\lambda + \sum_{\beta \in \N + \tfrac 12} \beta h(1/4 - \beta^2),\\
& \int hdV_{1,1} = \tfrac 1 2 \int_{5/4}^{\infty} h(\lambda)d\lambda + \tfrac 1 2 \int_{1/4}^{5/4}|\lambda - \tfrac 1 4|^{-1/2} d\lambda + \sum_{\beta \in \N} \beta h(1/4 - \beta^2).
\end{align*}
\noindent The measure $V_1$  is comparable to the Plancherel measure $\Pl$ for sets  that  
are at a positive  distance off $(0, 1/4)$ (see \cite[\S1.2.2]{BM10}).  

Often it is convenient to use, instead of $\lambda(f) \in \R^d$, the corresponding spectral parameter $\nu(f) \in ([0,\infty) \cup i(0, \infty))^d$, $\nu(f) = \sqrt {\tfrac 14 -\lambda(f)}$. We will use a tilde to indicate that the relevant measures like $\widetilde N ^r$, $\widetilde \Pl$ and $\widetilde V_1$ are taken with respect to the variable $\nu$, and we will write
\begin{equation}
\widetilde{N}^r (\widetilde{\Omega}) = \sum_{f\in \mathcal B_{\chi,q}\atop \nu(f) \in \widetilde{\Omega}} |c^{\ai,r} (f)|^{2},
\end{equation}
for sets $\widetilde{\Omega} \subset Y_{\xi}= \prod_{j=1}^d Y_{\xi_j}$, where $\xi_j \in \{0,1\}$,
\begin{align}\notag
Y_0 = \lbrace \tfrac{b-1}2 : b\ge 2, \textrm { b even}\rbrace \;\textrm{ and }\;   
Y_1 = \lbrace \tfrac{b-1}2 : b\ge 2, \textrm { b odd}\rbrace. 
\end{align}

In the $\nu$-coordinate the Plancherel measure on $Y_{\xi}$ is now given by $\widetilde{\textrm{Pl}}_{\xi} = \otimes_j \widetilde{\textrm{Pl}}_{\xi_j}$, where 
\begin{align}\label{tildePl}
&\widetilde{\textrm{Pl}}_{0}(f) = 
\sum_{\beta \in \tfrac 1 2 + \Z} |\beta|f(\beta),\quad 
\widetilde{\textrm{Pl}}_{1}(f) =
\sum_{\beta \in \Z\setminus \lbrace 0 \rbrace} |\beta|f(\beta).
\end{align}
The measure $\widetilde{V_1}$ has a product form $\widetilde{V_1}= \otimes_j \widetilde{V_{1,\xi_j}}$ on the space $\widetilde{\Omega} \subset Y_{\xi}= \prod_{j=1}^d Y_{\xi_j}$, where $\xi_j \in \{0,1\}$ with
\begin{align}\label{measuretildeV1}
\int h\, d\widetilde{V_{1,0}} =
\sum_{\beta>0,\,\, \beta \equiv \tfrac 1 2 (1)} \beta h(\beta) \;\textrm{ and }\;
\int h\, d\widetilde{V_{1,1}} =
\sum_{\beta>0, \,\, \beta \equiv 0 (1)} \beta h(\beta).
\end{align}

\subsubsection{Test functions.} \label{testfunctions}
We {\ shall } use test functions  
of product type $\varphi(\nu) = \prod_{j} \varphi_j(\nu_j)$, where the factor $\varphi_j$ is defined on a strip $|\textrm{Re}\nu_j|\leq \tau$ with $\tfrac{1}{4} < \tau < \tfrac{1}{2}$, and also on the discrete set $\tfrac{1+\xi_j}{2} + \N_0$. The $j^{th}$-factor $\varphi_j$ satisfies  $\varphi_j(\nu_j) \ll (1+|\nu_j|)^{-a}$ on its domain for some $a>2$, and is even and holomorphic on the strip $|\textrm{Re} \nu_j| \leq \tau$. The $\nu_j$ occurring are related to spectral data. The eigenvalues $\lambda_{j}(\varpi)$ of the Casimir operators in $V_{\varpi}$ are of the form $\lambda_j(\varpi) = \tfrac{1}{4} - \nu_j(\varpi)^2$,   with $\nu_j(\varpi) \in (0,\infty) \cup i[0,\infty)$. Thus, the test functions $\varphi$ can be viewed as defined on a neighborhood of the set of possible values of the vectors $\nu(\varpi) = (\nu_j(\varpi))_j$. 

\subsection{The asymptotic formula}

 In this subsection we will first review the Kuznetsov sum formula for $\GL_2$
	and then use it 
	to derive the asymptotic formula \eqref{eq:asymptotic}, which 
	will be a central tool in the proof of the main theorems in this paper.

A sum formula for $\SL_2$ over a number field for $K$-spherical functions was given in \cite [Theorem 6.1.]{BM98}.
In \cite{BMP01} (see also \cite[Theorem 3.21]{BM09}) an extension valid  for arbitrary $K$-types was given, still in the case of  $\SL_2$.
A spherical version  for  $\GL_2(\A_F)$ was proved  in \cite [Proposition 1]{Ve04} 
and a $\GL_2$ general version was derived in \cite [Theorem 1]{Ma13} (see also \cite[\S 2.12]{BH10}).

 \begin{definition}\label{def: BFSchiq} Fix $q \in \Z^d$ and $\chi$ a character of $\A_F^\times/F^\times$.	Let  $\mathcal B_{\chi,q}$ be  an orthonormal basis of the space 
  $\textrm {FS}_{\chi, q}^{\textrm {disc}}$ so that each $f\in \mathcal B_{\chi,q}$ has $K_\infty$-weight $q$ and is an eigenfunction  of all the Casimir operators $C_j$,  of all the Hecke operators $T_\mathfrak p$ and an eigenfunction of the center $Z(\A_F^\times)$ by the central character $\chi$.
\end{definition}

\ 

We define the  \emph{twisted Kloosterman sum} as in \cite[Definition 2]{Ve04}, taken in the particular case $\mathfrak a_1= \mathfrak a_2 = \mathfrak a$. 

Let $\mathfrak{a}$ be a fractional ideal of $F$ 
and let $\mathfrak{c}$ be any ideal so that $\mathfrak{c}^2 \sim \mathfrak{a}^2$  in the  class group. Fix $c \in \mathfrak{c}^{-1}\mathfrak{I}$, $r \in \mathfrak{a} ^{-1} \mathfrak{d}^{-1}$ and $r' \in \mathfrak{a} \mathfrak{d}^{-1} \mathfrak{c}^{-2}$. We set
\begin{equation}
KS(r,\mathfrak{a} ; r',\ai; c , \mathfrak{c}) = \sum_{x \in (\mathfrak{a} \mathfrak{c}^{-1} / \mathfrak{a} (c))^{\times}} e^{\frac{rx+r' x^{-1}}{c}} \overline{\chi(x)},
\end{equation} 
where the summation runs through  elements $x$  generating $(\mathfrak{a} \mathfrak{c}^{-1} / \mathfrak{a} (c))$ as an $\mathcal{O}_F$-module, 
 $x^{-1}$ is the unique element in $(\mathfrak{a}^{-1} \mathfrak{c} / \mathfrak{a}^{-1} (c)\mathfrak{c}^{2})^{\times}$ such that $xx^{-1} \in 1 + c\mathfrak{c}$  and $\chi$ is a character of $\A_F^{\times}/F^{\times}$ with conductor dividing $(c)\mathfrak{c}$ (then $\chi$ induces in a natural way a function $(\mathfrak{a} \mathfrak{c}^{-1} / \mathfrak{a} (c))^{\times} \rightarrow \C$).
 
Twisted Kloosterman sums satisfy a Weil bound (see \cite[(13)]{Ve04}). Namely
\begin{equation}\label{eq: weil bound}
|KS(r,\mathfrak{a};r',\ai;c,\mathfrak{c})| \ll N(r\mathfrak{a}\mathfrak{d},r'\mathfrak{c}^2 \mathfrak{a}^{-1}\mathfrak{d},c\mathfrak{c})^{1/2} N(\mathfrak{c}c)^{1/2 + \varepsilon}
\end{equation}
where the brackets $(\cdot,\cdot,\cdot)$ denote greatest common divisors of ideals. 

We now give the version of the Kuznetsov formula for  $\PGL_2$ 
that we shall use (see \cite[Prop.2.1]{Ve04} in the spherical case and   \cite[Thm. 1]{Ma13}  for arbitrary $K_\infty$-types).
 
Let $\mathfrak{a}$ be	 a fractional ideal, and $r, r'\in \mathfrak{a}^{-1}\mathfrak{d}^{-1}$.

	Then, for any test function $\varphi$ as in \S\ref{testfunctions}, we have  
	\begin{align}\label{eq:sum formula}
	&\sum_{f \in  \mathcal \mathcal B_{\chi,q}} c^{\mathfrak{a},r}(f)\overline{c^{ \mathfrak{a},r'}(f)} \varphi(\nu(f)) + \textrm{CSC} = \frac{ 2^d\sqrt {D_F}}{\pi^d h_F} \tilde \delta_{r,r'}
	 \widetilde {\Pl} (\varphi)\\& \notag  + \frac{ 2^{d-1}}{h_F} \sum_{\mathfrak{c} : \mathfrak{c}^2 \sim \mathfrak{a}^2} \sum_{\varepsilon \in \mathcal{O}^* / (\mathcal{O}^*)^2} \sum_{c \in \mathfrak{c}^{-1}\mathfrak{I}} B\varphi(\nu(f), \tfrac{\varepsilon rr'}{c^2 \gamma}) \frac{KS (r, \mathfrak{a} ; \varepsilon r' \langle \mathfrak{c}^2 /\mathfrak{a}^2 \rangle^{-1}, \ai;c,\mathfrak c)}{N(c\mathfrak{c})}
	\end{align}%
	where
	$\tilde \delta_{r, r'} =1$, iff   $r^{-1} r'\in \mathcal O_F^{*+}$ 
	and is equal to $0$ otherwise. Also, CSC denotes a contribution of the continuous spectrum and $B\varphi$ is a Bessel transform of $\varphi$. 
	
We now describe the main procedure in the derivation (see \cite[Section 4.2]{Ma13}).  
For  $r, r' \in \ai^{-1}\mathfrak{d}^{-1}$, 
nonzero elements with the property that $r / r'$ is totally positive, let $\psi_\infty(rx)$ and $\psi_\infty(r'x)$,
 be characters on $N(\R)^d$ 
which are trivial on $\Gamma(\mathfrak{I},\mathfrak{a})_N$. 

Define functions $f_1, f_2$ on $Z_{\infty} \ba \GL_2 (\R)^d$ such that
if $g\in \GL_2(\R)^d$, $x \in \R^d$ and $z \in {\R^\times}^d$ then
\begin{align*}
&f_1\left(\left(\begin{matrix}
1 & x\\ 0 & 1
\end{matrix} \right)\left(\begin{matrix}
z & 0\\ 0 & z
\end{matrix} \right)g\right) = \psi_{\infty}(rx) f(g),\quad
f_2\left(\left(\begin{matrix}
1 & x\\ 0 & 1
\end{matrix} \right)\left(\begin{matrix}
z & 0\\ 0 & z
\end{matrix} \right)g\right) = \psi_{\infty}(r'x) f(g).
\end{align*}
One now defines Poincar\'e series $P_1$ and $P_2$ on the $h_F$ classical components in the right-hand side of  \eqref{eq:L2classical} in such a way that $P_1^{\mathfrak{a'}}=0$ for $\mathfrak{a'}\ne \mathfrak{a}$, $P_2^{\mathfrak{a'}}=0$ for $\mathfrak{a'}\ne \mathfrak{ab^2}$ and, furthermore, 
\begin{align*}
P_1^{\mathfrak{a}}(g) = \sum_{\gamma \in Z_{\Gamma}\Gamma_{N}( \mathfrak{I},\mathfrak{a})\ba \Gamma_0( \mathfrak{I}, \mathfrak{a})} f_1(\gamma g),\quad
 P_2^{\mathfrak{ab}^2}(g) = \sum_{\gamma \in Z_{\Gamma}\Gamma_{N}( \mathfrak{I},\mathfrak{ab}^2)\ba \Gamma_0( \mathfrak{I},\mathfrak{ab}^2)} f_2(\gamma g),
\end{align*}
with $Z_\Gamma = Z_\infty \cap \Gamma$ and $\Gamma_N = \Gamma \cap N(\R)^d$.

The sum formula emerges by computing the inner product $\langle \pi_{\mathfrak{b}}P_2, P_1 \rangle$ in two ways, geometrically and spectrally  and by averaging over $\mathfrak{b} \in \mathcal{C}_F$. We refer to \cite[sections 6.3, 6.4]{Ve04}  and \cite [sections 5.1, 5.2]{Ma13} for the geometric description and for calculations on the spectral side, and to \cite{BM98} or \cite{BM09} for convergence considerations and  for the integral formulas used. See also \cite[\S 2.12]{BH10}.

\

As an application of formula \eqref{eq:sum formula} we  will derive an asymptotic formula that is an adaptation of a result in \cite{BMP03} and \cite{BM10}, and which  will be a main
 tool in the proof of Theorem~\ref{thm:principal 1}. 
We fix a   partition of the  set of  archimedean places of $F$
\begin{equation}
\lbrace 1, \ldots, d \rbrace = E \sqcup Q_+ \sqcup Q_- ,
\;\; \textrm{ where } Q := Q_+ \sqcup Q_- \neq \emptyset,
\end{equation}
and  we consider  the automorphic irreducible subrepresentations  $\varpi$ in $L^{2,\textrm {disc}} \big(\Gamma_{0}(\mathfrak{I}, \ai) \ba \PGL_2 (\R)^d, \chi \big)$ with prescribed   spectral  parameters at the places  $j \in E$, so that, for  $j \in Q_-$, $\varpi_j$ is in the discrete series and  for  $ j \in Q_+$, $\varpi_j$ is in the   principal or complementary series. That is, we consider the set $\mathcal{R}$ of representations $\varpi$ such that  
\begin{align}
&\lambda_{j}(\varpi) \in [a_j , b_j] \textrm{ for } j\in E,   \quad  \lambda_{j}(\varpi) > 0  \textrm{ if } j \in Q_+, \; \textrm{ and }
\lambda_{j}(\varpi) \leq 0   \textrm{ if } j \in Q_-.
\end{align} 
Furthermore, the intervals $ [a_j , b_j] \subset \R$ are required to satisfy the  condition that the endpoints  $a_j, b_j$ are not of the form  $\tfrac{b}{2}\left(1 - \tfrac{b}{2}\right)$ with $b \geq 1$,  $b \equiv \xi_{j}$ mod $2$. 

Let \begin{equation}\label{eq: Omega general}
\Omega_t = \prod_{ j \in E} [a_j, b_j] \times \prod_{ j \in Q_+} [A_j(t), B_j(t)] \times \prod_{ j \in Q_-} [C_j(t), D_j(t)],
\end{equation}
where  $\Omega_t$ satisfies the conditions in Proposition~6.2 of \cite{BM10}, in particular $B_j(t) \rightarrow +\infty$ for at least one $j \in Q_+$ or $C_j(t) \rightarrow -\infty$ for some $j \in Q_-$.


\begin{theorem}[Asymptotic formula] Let $t \mapsto \Omega_t$ be a family of bounded sets in $\R^d$ as in \eqref{eq: Omega general}.
	Let  $\mathfrak{a}$ be a fractional ideal, $\chi$ a unitary character of $\A^\times_F/F^\times$ and $q \in \Z^d$.
	Let $\mathcal B_{\chi,q}$ be as in Definition \ref{def: BFSchiq}. 
	Then, if $r , r' \in \mathfrak{a}^{-1}\mathfrak{d}^{-1}$, as $t \longrightarrow \infty$
	\begin{align} \label{eq:asymptotic}\sum_{f \in \mathcal B_{\chi,q} \atop \lambda(f)  \in \Omega_t} \overline{c^{\mathfrak{a}, r}(f)}c^{\mathfrak a ,r'}(f) =  \tilde\delta_{ r, r'}\frac{ 2^d\sqrt {D_F}}{\pi^d h_F}  \Pl (\Omega_t) + o(V_1(\Omega_t)),	\end{align}
	where
	$\tilde \delta_{r, r'} =1$, if   $r/r' \in (\mathcal{O}_F^\times)^{+}$ and $\tilde \delta_{r, r'} =0$ otherwise.
	\label{thm: asymptotic formula}
\end{theorem}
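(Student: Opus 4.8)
The plan is to derive the asymptotic formula \eqref{eq:asymptotic} from the Kuznetsov sum formula \eqref{eq:sum formula} by applying it to a carefully chosen sequence of test functions $\varphi = \varphi_t$ that approximate the indicator function of the region $\Omega_t$ (equivalently, of the corresponding region $\widetilde\Omega_t$ in the $\nu$-variable), and then showing that all terms other than the main Plancherel term contribute only $o(V_1(\Omega_t))$. First I would fix the $M$-character $\xi = (\xi_j)$ determined by $\chi$ and $q$, pass to the $\nu$-coordinates, and choose for each $j \in E$ a fixed even holomorphic bump $\varphi_j$ on the strip $|\mathrm{Re}\,\nu_j| \le \tau$ supported essentially on the preimage of $[a_j,b_j]$ (legitimate since the endpoints avoid the discrete spectral points $\tfrac b2(1-\tfrac b2)$), while for $j \in Q_\pm$ I would take $\varphi_j = \varphi_{t,j}$ a smooth approximation to the indicator of the expanding interval, with decay $\varphi_{t,j}(\nu_j) \ll (1+|\nu_j|)^{-a}$ uniform in $t$ for some $a>2$. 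This is precisely the setup in \S\ref{testfunctions}, and it is exactly the class of test functions for which the Kuznetsov formula and the analytic estimates of \cite{BM10}, \cite{BMP03} are available.

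The next step is to read off the three pieces of \eqref{eq:sum formula}: the spectral side, the diagonal (delta) term, and the Kloosterman term. The diagonal term is $\tilde\delta_{r,r'} \tfrac{2^d\sqrt{D_F}}{\pi^d h_F}\widetilde{\Pl}(\varphi_t)$, and a standard argument (monotone/dominated convergence against $\widetilde{\Pl}$, which is a discrete measure in the $\nu$-variables by \eqref{tildePl}) shows $\widetilde{\Pl}(\varphi_t) = \Pl(\Omega_t)(1+o(1))$ once the approximation $\varphi_{t,j}\to \mathbf 1$ is taken with an error window of width $o(1)$ relative to the growth of the interval; the contribution of that window is $O(V_1(\Omega_t))$ times a quantity tending to $0$, hence $o(V_1(\Omega_t))$, using that $V_1$ dominates $\Pl$ on sets bounded away from $(0,1/4)$ as recalled after \eqref{measuretildeV1}. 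On the spectral side, the difference between $\sum_{f}|c^{\ai,r}(f)|^2\varphi_t(\nu(f))$ (resp.\ with $\overline{c^{\ai,r}}c^{\ai,r'}$) and the truncated sum $\sum_{\lambda(f)\in\Omega_t}\overline{c^{\ai,r}(f)}c^{\ai,r'}(f)$ is controlled by the Fourier-coefficient mean value bounds of \cite{BMP03}/\cite{BM10}: the sum over $f$ with $\nu(f)$ in the transition window is $o(V_1(\Omega_t))$, and the contribution of the continuous spectrum CSC is likewise absorbed into the error by the same estimates (this is the content of Proposition~6.2 of \cite{BM10}, whose hypotheses $\Omega_t$ is assumed to satisfy in \eqref{eq: Omega general}).

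The main obstacle — and the step that does the real work — is bounding the Kloosterman term
$$\frac{2^{d-1}}{h_F}\sum_{\mathfrak c:\,\mathfrak c^2\sim\mathfrak a^2}\ \sum_{\varepsilon\in\mathcal O^*/(\mathcal O^*)^2}\ \sum_{c\in\mathfrak c^{-1}\mathfrak I} B\varphi_t\!\left(\nu(f),\tfrac{\varepsilon r r'}{c^2\gamma}\right)\frac{KS(r,\mathfrak a;\varepsilon r'\langle\mathfrak c^2/\mathfrak a^2\rangle^{-1},\ai;c,\mathfrak c)}{N(c\mathfrak c)}$$
and showing it is $o(V_1(\Omega_t))$. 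Here I would insert the Weil bound \eqref{eq: weil bound} for the twisted Kloosterman sums and the decay estimates for the Bessel transform $B\varphi_t$ as a function of $c$ and of the spectral data, exactly as done in \cite{BM10} (and originally \cite{BMP03}): the Bessel transform is small when $N(c)$ is large relative to the spectral parameters, which forces effective convergence of the $c$-sum, and the gain over the trivial bound is what produces a power saving in $t$, hence $o(\Pl(\Omega_t)) = o(V_1(\Omega_t))$. The point is that our region $\Omega_t$, being a product of a fixed box over $E$ and expanding intervals over $Q$, is of exactly the shape for which the estimate of the Bessel-Kloosterman contribution in Proposition~6.2 of \cite{BM10} applies verbatim; the only adaptation is bookkeeping of the ideal $\ai$ and the twist by $\chi$, which affects constants but not the order of growth. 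Finally, setting $r'=r$ gives $\tilde\delta_{r,r}=1$ and the full main term $\tfrac{2^d\sqrt{D_F}}{\pi^d h_F}\Pl(\Omega_t)$, while $r'\ne r$ with $r/r'\notin(\mathcal O_F^\times)^+$ gives $\tilde\delta_{r,r'}=0$, completing the proof.
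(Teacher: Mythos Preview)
Your proposal is correct and follows essentially the same approach as the paper: apply the Kuznetsov formula \eqref{eq:sum formula} with test functions of product type approximating the indicator of $\Omega_t$ (the paper takes them exactly as in \cite[Lemma~2.2]{BM10}), identify the delta term as the main term, and bound the Kloosterman contribution via the Weil bound \eqref{eq: weil bound} together with the Bessel-transform estimates of \cite{BM10}, \cite{BMP03}, and the CSC contribution via the $L$-series bounds of \cite[\S5]{BMP03}. The paper defers the actual Kloosterman and Eisenstein estimates to its Appendix and then invokes \cite[\S2--5]{BM10} for the passage from smooth test functions to sharp cutoffs; your outline of that passage (transition-window control against $V_1$) is the same mechanism.
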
 


The  asymptotic formula is derived by using \eqref{eq:sum formula} (with the variable $\lambda$ in place of $\nu$) for a suitable choice of the test functions.
We can adapt the method of proof of \cite[Thm 1.3]{BM10}. 
 A main part  of the argument involves showing that the CSC and  the Kloosterman term of the Kuznetsov formula are of lower order than the delta term, which is the main term, given  in \eqref{eq:sum formula}. 
 For simplicity we defer a sketch of these computations to the  Appendix
 (see \eqref{eq:bound Kloosterman} and \eqref{eq:bound Eisenstein}).  

 In the proof, we choose the test functions in the same way as in \cite[Lemma 2.2]{BM10}. 
 Then the asymptotic result follows by arguing as in
\cite[sections \S 2-5]{BM10}.
\begin{remark} We note that the formula does not  exclude 
exceptional eigenvalues but shows that the exceptional spectrum has 
 lower density, since the interval $(0,1/4)$ is not in the support of the Plancherel measure appearing in the main term in \eqref{eq:asymptotic}.
\end{remark}

\subsection{Joint distribution of Hecke and Casimir eigenvalues} \label{sec: distribucion conjunta}
The main goal of this section will be to estimate the function $N^r (\Omega_t)$,  now including  conditions on the Hecke eigenvalues $\lambda_{\p}(f)$.

Given a subinterval $I_{\p} \subseteq [-2, 2]$ and an integral ideal $\ai$ we study the asymptotic behavior of the function 
\begin{equation}\label{eq: N(omega, I)}
N^{r}(\Omega_t, I_{\p}) = \sum_{ f \in \mathcal B_{\chi,q}: \lambda(f) \in \Omega_t, \atop  \lambda_{\p}(f) \in I_{\p}} |c^{\ai,r} (f)|^2
\end{equation}
with $r \in \ai^{-1}\mathfrak{d}^{-1} \setminus \lbrace 0 \rbrace$ and  $f$ running through an orthogonal  system  of  irreducible subspaces of $L^{2, \textrm{cusp}}\big( \Gamma_{0}(\mathfrak{I}, \ai) \ba \PGL_2(\R)^d, \chi \big)$.

Relative to the Hecke eigenvalues, one wishes to measure their  distribution relative to the   Sato--Tate measure \eqref{eq:SatoTate}. As in  \cite{KL08} and \cite{Li09} we will use the following measure for any $r\in F$
\begin{equation}\label{eq: phi}
\Phi_{\ai,r}(x) := \sum_{\ell'=0}^{\textrm{ord}_{\p}(r\ai \mathfrak{d})} X_{2\ell'}(x)d\mu_{\infty}(x),
\end{equation}
where $d\mu_{\infty}$ is the   Sato--Tate measure in $[-2,2]$ 
and $X_m(x)$ denotes the $m^{th}$-Chebyshev polynomial. These polynomials are orthonormal with respect to the Sato-Tate measure in $[-2,2]$ and satisfy  $X_\ell(T_{p}) =T_{\p^\ell}$ for every prime ideal $\p$ (see \cite[\S2]{Se97}). 
In particular, for any $r\in \ai^{-1}\mathfrak{d}^{-1}$ such that 
$\p\nmid r\ai\mathfrak{d}$, 
$\Phi_{\ai,r}(x)$ is just the  Sato--Tate measure.
The following theorems are the main results in this paper. 
\begin{theorem} \label{thm:principal 1} Let $t \mapsto \Omega_{t}$ be a family of subsets of $\R^{d}$ as in (\ref{eq: Omega general})
	 	and let $\mathcal B_{\chi,q}$ be as in Definition \ref{def: BFSchiq}.  Let $\p \nmid \mathfrak{I}$, $\p$  a square in the narrow class group,  $\ai$ an integral ideal and, for each $f\in \mathcal B_{\chi,q}$, let $\lambda_{\p}(f)$ be the eigenvalue of   $T_{\p}$ in $f$.  Assume $ r\in \p^\ell \ai^{-1} \mathfrak d^{-1}$.
	Then, as $t \rightarrow \infty$ we have 
	\begin{align}\label{eq:principal 1}
	\sum_{f\in \mathcal B_{\chi,q} \atop \lambda(f)\in \Omega_{t}} \!\!|c ^{\ai,r}(f)|^{2}X_{\ell}(\lambda_{\p}(f)) = \begin{cases} \frac{ 2^d\sqrt {D_F}}{\pi^d h_F} \,\Pl(\Omega_{t})\,\Phi_{\ai,r}(X_{\ell}) + o(V_{1}(\Omega_{t}))\,\,\, \textrm{ if  } \ell \textrm{ even,}  
	\\
	o(V_{1}(\Omega_{t})) \,\, \textrm{ if  } \ell \textrm{  odd}, 
	\end{cases}  
	\end{align}
	with $\Phi_{\ai,r}$  as in \eqref{eq: phi} and  $V_1$  as in Theorem~\ref{thm: asymptotic formula}.

\end{theorem}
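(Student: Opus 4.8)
The plan is to reduce \eqref{eq:principal 1} to the asymptotic formula of Theorem~\ref{thm: asymptotic formula} by means of Theorem~\ref{teo: relacion coeficientes y autovalores}. Serre's identity $X_\ell(T_\p)=T_{\p^\ell}$ gives $X_\ell(\lambda_\p(f))=\lambda_{\p^\ell}(f)$, the normalized eigenvalue of $T_{\p^\ell}$ on $f$, so that $|c^{\ai,r}(f)|^2X_\ell(\lambda_\p(f))=\overline{c^{\ai,r}(f)}\bigl(\lambda_{\p^\ell}(f)\,c^{\ai,r}(f)\bigr)$. Since $\p$ is a square in the narrow class group we may fix, as in Proposition~\ref{relacion de los tpl}, an integral ideal $\mathfrak b$ with $\p\mathfrak b^2=(\eta_{\p\mathfrak b^2})$ and $\eta_{\p\mathfrak b^2}$ totally positive. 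For $r\in\p^\ell\ai^{-1}\mathfrak d^{-1}$, Theorem~\ref{teo: relacion coeficientes y autovalores} then yields, for every $f\in\mathcal B_{\chi,q}$,
\[
\lambda_{\p^\ell}(f)\,c^{\ai,r}(f)=\sum_{s=0}^{\ell}\kappa_s\,c^{\ai,r_s}(f),\qquad r_s=\frac{r\,\eta_{\p\mathfrak b^2}^{\,\ell}}{a_s^{2}},
\]
where the constants $\kappa_s=\chi^{-1}(\pi_{\mathfrak b^\ell})\,N(\p)^{s-\ell/2}|N(a_s)|^{-1}N(\eta_{\p\mathfrak b^2})^{\ell/2}e^{2\pi i S(r\widetilde{b_s}/a_s)}\chi_f(k_s)$ do not depend on $f$; since $|N(a_s)|=N(\p)^sN(\mathfrak b)^\ell=N(\eta_{\p\mathfrak b^2})^{\ell/2}$ they all have modulus $1$. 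A short computation of $\p$-adic valuations shows $r_s\in\p^{\,2\ell-2s}\ai^{-1}\mathfrak d^{-1}\subseteq\ai^{-1}\mathfrak d^{-1}$, so $r_s$ is an admissible second index in Theorem~\ref{thm: asymptotic formula}.

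Next I multiply by $\overline{c^{\ai,r}(f)}$ and sum over the forms $f\in\mathcal B_{\chi,q}$ with $\lambda(f)\in\Omega_t$ (for fixed $t$ this is a finite sum, the weight and level being fixed); interchanging the finite sum over $s$ with the sum over $f$ gives
\[
\sum_{f\in\mathcal B_{\chi,q}\atop\lambda(f)\in\Omega_t}|c^{\ai,r}(f)|^2X_\ell(\lambda_\p(f))=\sum_{s=0}^{\ell}\kappa_s\sum_{f\in\mathcal B_{\chi,q}\atop\lambda(f)\in\Omega_t}\overline{c^{\ai,r}(f)}\,c^{\ai,r_s}(f).
\]
Applying Theorem~\ref{thm: asymptotic formula} to each inner sum (with $r'=r_s$) gives $\tilde\delta_{r,r_s}\tfrac{2^d\sqrt{D_F}}{\pi^dh_F}\Pl(\Omega_t)+o(V_1(\Omega_t))$. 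As there are $\ell+1$ terms and $|\kappa_s|=1$, the error terms add up to $o(V_1(\Omega_t))$, and it remains to locate the surviving main term.

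The delta factor is governed by a single valuation: $v_\p(r/r_s)=2v_\p(a_s)-\ell\,v_\p(\eta_{\p\mathfrak b^2})=2\bigl(s+\ell\,v_\p(\mathfrak b)\bigr)-\ell\bigl(1+2v_\p(\mathfrak b)\bigr)=2s-\ell$, which is nonzero unless $s=\ell/2$. Hence $\tilde\delta_{r,r_s}=0$ for $s\ne\ell/2$, so for $\ell$ odd no main term survives and the whole sum is $o(V_1(\Omega_t))$. For $\ell$ even only $s=\ell/2$ contributes; there $\p^{\ell/2}\mathfrak b^\ell=(\eta_{\p\mathfrak b^2}^{\,\ell/2})$ is principal with totally positive generator, so one may take $a_{\ell/2}=\eta_{\p\mathfrak b^2}^{\,\ell/2}$, whence $r_{\ell/2}=r$ (so $\tilde\delta_{r,r_{\ell/2}}=1$), $\widetilde{b_{\ell/2}}=0$, and $k_{\ell/2}$ is unipotent; a direct check with the explicit data of Theorem~\ref{teo: relacion coeficientes y autovalores} (the norm factors cancel, $\chi_f(k_{\ell/2})=1$, and the remaining character value is trivial once $\eta_{\p\mathfrak b^2}$ is taken totally positive) gives $\kappa_{\ell/2}=1$, so the main term equals $\tfrac{2^d\sqrt{D_F}}{\pi^dh_F}\Pl(\Omega_t)$.

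Finally I match this with $\Phi_{\ai,r}(X_\ell)$. By the orthonormality of the Chebyshev polynomials for $d\mu_\infty$, $\Phi_{\ai,r}(X_\ell)=\sum_{\ell'=0}^{\textrm{ord}_\p(r\ai\mathfrak d)}\int X_\ell X_{2\ell'}\,d\mu_\infty$ equals $1$ if $\ell$ is even and $\ell/2\le\textrm{ord}_\p(r\ai\mathfrak d)$, and $0$ otherwise; and the hypothesis $r\in\p^\ell\ai^{-1}\mathfrak d^{-1}$ forces $\textrm{ord}_\p(r\ai\mathfrak d)\ge\ell\ge\ell/2$, so $\Phi_{\ai,r}(X_\ell)=1$ for $\ell$ even and $=0$ for $\ell$ odd. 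Both cases of \eqref{eq:principal 1} follow at once. The genuinely delicate point is not the analysis --- which is entirely absorbed into the black-box application of Theorem~\ref{thm: asymptotic formula} --- but the arithmetic bookkeeping leading to $\kappa_{\ell/2}=1$ and, upstream of it, the fact that $\eta_{\p\mathfrak b^2}$ may be chosen totally positive: this is precisely where the hypothesis that $\p$ is a square in the \emph{narrow} (rather than the ordinary) class group is indispensable, since it is what makes the sign characters $\epsilon_\varpi$ disappear in Theorem~\ref{teo: relacion coeficientes y autovalores}.
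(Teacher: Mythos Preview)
Your proof is correct and follows essentially the same route as the paper's: rewrite $X_\ell(\lambda_\p(f))=\lambda_{\p^\ell}(f)$, expand $\lambda_{\p^\ell}(f)c^{\ai,r}(f)$ via Theorem~\ref{teo: relacion coeficientes y autovalores}, apply the asymptotic formula of Theorem~\ref{thm: asymptotic formula} termwise, and pin down the lone surviving main term at $s=\ell/2$ by a valuation argument. One small slip: the claim that $|\kappa_s|=1$ for \emph{every} $s$ rests on $|N(a_s)|=N(\p)^sN(\mathfrak b)^\ell$, which need not hold when $\p^s\mathfrak b^\ell$ is non-principal (then $a_s$ is merely an element of that ideal, not a generator); but since each $\kappa_s$ is a fixed constant independent of $t$ and $f$, the error terms still combine to $o(V_1(\Omega_t))$ and the argument goes through unchanged.
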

\begin{proof}
	
	Since $ X_{\ell}(\lambda_{\p}(f)) = \lambda_{\mathfrak p^{\ell}}(f)$,
then,  by  Theorem~\ref{teo: relacion coeficientes y autovalores} and Theorem \ref{thm: asymptotic formula}
	\begin{align}\notag
		\sum_{f\in \mathcal B_{\chi,q} \atop \lambda(f)\in \Omega_{t}}\!\!\! |c^{\ai,r}(f)|^{2}X_{\ell}(\lambda_{\p}(f))& = \sum_{f\in \mathcal B_{\chi,q} \atop \lambda(f)\in \Omega_{t}} \overline{c^{\ai,r}(f)}c^{\ai,r}(f)\lambda_{\p^{\ell},f}\\
\notag	&= 	\sum_{f\in \mathcal B_{\chi,q} \atop \lambda(f)\in \Omega_{t}} 
\sum_{s = 0}^{\ell}\overline{c^{\ai,r}(f)} \frac{N(\p)^{s}}{N(a_s)} \frac{N(\eta_{\p\mathfrak{b}^{2}}^{\ell})^{1/2}}{N(\p)^{\ell/2}}e^{2\pi i S\left(\frac{r\widetilde{b_{s}}}{a_{s}}\right)}\chi_f(k_{s})c^{\ai,\frac{r\eta_{\p\mathfrak{b}^{2}}^{\ell}}{a_{s}^{2}}}(f)\\
\notag	&=  \sum_{s = 0}^{\ell}\frac{N(\p)^{s}}{N(a_s)} \frac{N(\eta_{\p\mathfrak{b}^{2}}^{\ell})^{1/2}}{N(\p)^{\ell/2}}e^{2\pi i S\left(\frac{r\widetilde{b_{s}}}{a_{s}}\right)}\chi_f(k_{s})	\sum_{f\in \mathcal B_{\chi,q} \atop \lambda(f)\in \Omega_{t}} \overline{c^{\ai,r}(f)}c^{\ai,\frac{r\eta_{\p\mathfrak{b}^{2}}^{\ell}}{a_{s}^{2}}}(f)\\
\notag	&= \sum_{s = 0}^{\ell}\frac{N(\p)^{s}}{N(a_s)} \frac{N(\eta_{\p\mathfrak{b}^{2}}^{\ell})^{1/2}}{N(\p)^{\ell/2}}e^{2\pi i S\left(\frac{r\widetilde{b_{s}}}{a_{s}}\right)}\chi_f(k_{s})
	\;\delta \Big(r, \tfrac{r\eta_{\p \mathfrak{b}^{2}}^{\ell}}{a_{s}^{2}} \Big)\frac{ 2^d\sqrt {D_F}} 
	{\pi^d h_F}\Pl(\Omega_{t})\\\notag & + o(V_{1}(\Omega_{t})).
	\end{align}
By definition,	
	$\tilde \delta_{r, {r\eta_{\p \mathfrak{b}^{2}}^{\ell}}/{a_{s}^{2}}} = 1$ if and only if  $  {\eta_{\p \mathfrak{b}^{2}}^{\ell}}/{a_{s}^{2}} \in (\mathcal{O}^{\times}_{F})^+$. 
	We will  see next that this can happen if and only if $2s = \ell$.
	
	 First, we assume that $\delta_{r, {r\eta_{\p \mathfrak{b}^{2}}^{\ell}}/{a_{s}^{2}}} = 1$. 
	 This implies  ${a_{s}^{2}}/{\eta_{\p \mathfrak{b}^{2}}^{\ell}} = u \in ({\mathcal{O}^{\times}_{F}})^+$ 
 or equivalently	$a_{s}^{2} \mathcal O_F =  \eta_{\p \mathfrak{b}^{2}}^{\ell} \mathcal O_F = \p^{\ell}\mathfrak{b}^{2\ell}$. 
Thus $\ell$ must be even, and 
	 $v_{\p}(a_{s}) = \tfrac \ell 2 + \ell v_{\p}(\mathfrak{b})$ and $v_{\mathfrak{q}}(a_{s}) = \ell v_{\mathfrak{q}}(\mathfrak{b})$.
	 Since the element $a_s \in F$ 
is such that $v_{\p}(a_{s}) = s + \ell v_{\p}(\mathfrak{b})$ 
(see Proposition~\ref{relacion de los tpl})  
 then necessarily $s = \tfrac \ell 2$.

	Conversely, if  $\ell$ is even, let $s = \tfrac{\ell}{2}$. 
We have that $\p^{\ell/2}\mathfrak{b}^{\ell} =  \eta_{\p \mathfrak{b}^2}^{\ell/2} \mathcal O_F$, that is, we are in the case when $\p^s\mathfrak b^{\ell}$ is principal, and therefore, we can use  \eqref{eq: matrices diagonales} and \eqref{eq: diagonal para s principal}  
 to conclude that  $a_{s} = \eta_{\p \mathfrak{b}^{2}}^{\ell/2}u$ with $u \in \mathcal{O}_{F}^{\times}$. Thus $a_s^2 = \eta_{\p\mathfrak{b}^2}^{\ell}u^2$, that is, ${a_s^2}/{\eta_{\p\mathfrak{b}^2}^{\ell}}=u^2 \in \mathcal{O}_{F}^{\times}$  is totally positive, hence $\delta_{r, {r\eta_{\p\mathfrak{b}^{2}}^{\ell} }/{a_{s}^{2}}}= 1$. 
	
 Furthermore,	in the case when $\ell$ is even, $s = \tfrac{\ell}{2}$, 
by the expression of
$\gamma_{\beta,s}$ in equation \eqref{eq: diagonal para s principal}, we have that $b_s =0$ which implies that $e^{2\pi i S\left( \frac{rb_{s}}{a_{s}}\right)} = 1$. Furthermore, the element $k_{s}$ 
satisfies $\chi_f(k_{s}) = 1$.  

Putting all these facts together we get that 
\begin{equation*}
	\sum_{f\in \mathcal B_{\chi,q} \atop \lambda(f)\in \Omega_{t}}\!\!\! |c^{\ai,r}(f)|^{2}X_{\ell}(\lambda_{\p}(f)) = 
\frac{ 2^d\sqrt {D_F}}  
{\pi^d h_F}\Pl(\Omega_{t}) + o(V_{1}(\Omega_{t})).	
\end{equation*}
Now  $ 0\leq \ell/2 \leq \textrm{ord}_{\p}(r\ai\mathfrak{d})$,  since $r\in \p^\ell \ai^{-1}\mathfrak d^{-1}$. Thus, since  $\ell$ is even  $\Phi_{\ai,r} (X_\ell) = 1$ by  expression \eqref{eq: phi}. This proves the first assertion in the theorem,

Relative to the second assertion, if $\ell$ is odd, then $\Phi_{\ai,r} (X_\ell) = 0$, again by \eqref{eq: phi}. This completes the proof of the theorem.

\end{proof}

As the  Chebyshev polynomials $\lbrace X_{\ell} \rbrace $ are a basis of the space of all polynomials and these are uniformly  dense in $C([-2,2])$, we may  replace $X_{\ell}$ by any    continuous function $f$ in Theorem \ref{thm:principal 1} (see    \cite[Prop.4.8]{BM13} or \cite[Thm. 10.2]{KL13}).
Now, arguing as in \cite[\S 4.3.2]{BM13} to extend  the formula to  characteristic  functions we obtain:

\begin{theorem}\label{thm: principal}
	
	Let $t \mapsto \Omega_{t}$ be a family of sets in $\R^{d}$ as in \eqref{eq: Omega general} 
	and let $\mathcal B_{\chi,q}$ be as in Definition \ref{def: BFSchiq}. Let  $\p$ be a prime ideal that is a square in $\mathcal C^+_F$, $\p \nmid \mathfrak{I}$, let $\lambda_{\p}(f)$ be the eigenvalue of $T_{\p}$ on $f\in \mathcal B_{\chi,q}$ 
	and let $\Phi_{\ai,r}(x)$ be as in \eqref{eq: phi}, with $r \in \p^\ell\ai^{-1}\mathfrak d^{-1}$. 
Then, if $t \rightarrow \infty$, given any  interval $I_{\p} \subseteq [-2,2]$, we have that 
	\begin{align}
	\sum_{f\in \mathcal B_{\chi,q} :{\lambda(f)\in \Omega_{t}}, \atop \lambda_{\p}(f) \in I_{\p}} |c ^{\ai,r}(f)|^{2} =  \frac{ 2^d\sqrt {D_F}}{\pi^d h_F}\Pl(\Omega_{t})\Phi_{\ai,r}(I_{\p}) + o(V_{1}(\Omega_{t})).
	\end{align}
\end{theorem}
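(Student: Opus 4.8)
The plan is to deduce Theorem~\ref{thm: principal} from Theorem~\ref{thm:principal 1} by a standard three–step approximation argument, in the spirit of \cite[\S4.3.2]{BM13}. The first step is essentially already in place: since the Chebyshev polynomials $\{X_\ell\}$ are a basis of $\C[x]$ and, by Stone--Weierstrass, the polynomials are uniformly dense in $C([-2,2])$, Theorem~\ref{thm:principal 1} extends by linearity and uniform approximation (cf.\ \cite[Prop.~4.8]{BM13}, \cite[Thm.~10.2]{KL13}) to the statement: for every $\phi\in C([-2,2])$,
\begin{equation}\notag
\sum_{f\in \mathcal B_{\chi,q},\ \lambda(f)\in \Omega_t} |c^{\ai,r}(f)|^2\, \phi(\lambda_\p(f)) = \frac{2^d\sqrt{D_F}}{\pi^d h_F}\,\Pl(\Omega_t)\,\Phi_{\ai,r}(\phi) + o(V_1(\Omega_t)),
\end{equation}
where $\Phi_{\ai,r}(\phi)=\int_{-2}^2\phi\,d\Phi_{\ai,r}$ and where I stress that the implied constant in the error term may depend on $\phi$. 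The interchange of the uniform limit with the $t\to\infty$ asymptotics is justified by noting that $N^r(\Omega_t):=\sum_{\lambda(f)\in\Omega_t}|c^{\ai,r}(f)|^2\ll V_1(\Omega_t)$, which follows from Theorem~\ref{thm: asymptotic formula} applied with $r'=r$ (so $\tilde\delta_{r,r}=1$ and $N^r(\Omega_t)=\tfrac{2^d\sqrt{D_F}}{\pi^d h_F}\Pl(\Omega_t)+o(V_1(\Omega_t))$) together with the comparison $\Pl(\Omega_t)\ll V_1(\Omega_t)$ valid for the admissible families $\Omega_t$ (see \cite[\S1.2.2]{BM10}).

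The second step is to pass from continuous test functions to the indicator $\mathbf 1_{I_\p}$ of an interval $I_\p\subseteq[-2,2]$. Here I would use that $\Phi_{\ai,r}$ is a finite, \emph{non-negative}, absolutely continuous measure on $[-2,2]$: by the Fej\'er-kernel identity $\sum_{\ell'=0}^{N}X_{2\ell'}(2\cos\theta)=\sin^2((N+1)\theta)/\sin^2\theta\ge 0$ (with $N=\mathrm{ord}_\p(r\ai\mathfrak d)$) its density is non-negative, and it is a bounded polynomial times $\tfrac1\pi\sqrt{1-x^2/4}$, hence $\Phi_{\ai,r}$ has no atoms, in particular none at the endpoints of $I_\p$. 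Consequently, given $\varepsilon>0$ one can choose continuous functions $\phi^-_\varepsilon\le \mathbf 1_{I_\p}\le \phi^+_\varepsilon$ with $0\le\phi^\pm_\varepsilon\le 1$ and $\Phi_{\ai,r}(\phi^+_\varepsilon)-\Phi_{\ai,r}(\phi^-_\varepsilon)<\varepsilon$; in particular $|\Phi_{\ai,r}(\phi^\pm_\varepsilon)-\Phi_{\ai,r}(I_\p)|<\varepsilon$. Since $|c^{\ai,r}(f)|^2\ge 0$ and $\lambda_\p(f)\in[-2,2]$ for every $f$, this gives the sandwich
\begin{equation}\notag
\sum_{\lambda(f)\in\Omega_t}|c^{\ai,r}(f)|^2\phi^-_\varepsilon(\lambda_\p(f))\ \le\ N^r(\Omega_t,I_\p)\ \le\ \sum_{\lambda(f)\in\Omega_t}|c^{\ai,r}(f)|^2\phi^+_\varepsilon(\lambda_\p(f)).
\end{equation}

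The third step is to combine these. Applying the continuous version of Step~1 to the fixed functions $\phi^\pm_\varepsilon$ and dividing the sandwich by $V_1(\Omega_t)$, the error terms are $o(V_1(\Omega_t))$ as $t\to\infty$ (for this fixed $\varepsilon$), while the two main terms differ from $\tfrac{2^d\sqrt{D_F}}{\pi^d h_F}\Pl(\Omega_t)\Phi_{\ai,r}(I_\p)$ by at most $\tfrac{2^d\sqrt{D_F}}{\pi^d h_F}\Pl(\Omega_t)\varepsilon$. Using $\Pl(\Omega_t)\ll V_1(\Omega_t)$ one obtains
\begin{equation}\notag
\limsup_{t\to\infty}\ \frac{\bigl|\,N^r(\Omega_t,I_\p)-\tfrac{2^d\sqrt{D_F}}{\pi^d h_F}\Pl(\Omega_t)\Phi_{\ai,r}(I_\p)\,\bigr|}{V_1(\Omega_t)}\ \le\ C\varepsilon,
\end{equation}
with $C$ independent of $\varepsilon$, and letting $\varepsilon\to 0$ yields the assertion. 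I expect no deep obstacle here; the two points that require care are the order of the limits — the $\varepsilon$-approximation must be chosen before sending $t\to\infty$, since the $o$-term in the continuous version is not uniform in the test function — and the quantitative comparison $\Pl(\Omega_t)\ll V_1(\Omega_t)$, which is what makes the squeeze effective at the correct scale; both are dealt with exactly as in \cite{BM10} and \cite{BM13}.
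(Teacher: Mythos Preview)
Your argument is correct and follows exactly the route the paper takes: extend Theorem~\ref{thm:principal 1} from Chebyshev polynomials to $C([-2,2])$ by Stone--Weierstrass (the paper cites \cite[Prop.~4.8]{BM13} and \cite[Thm.~10.2]{KL13} for this), then pass to indicator functions of intervals by a sandwich argument as in \cite[\S4.3.2]{BM13}. You have in fact supplied more detail than the paper does---notably the Fej\'er-type identity showing $\Phi_{\ai,r}\ge 0$ and the explicit use of $\Pl(\Omega_t)\ll V_1(\Omega_t)$ to control the squeeze---but the structure is identical.
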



 In particular this implies that, if $\p$ is a square in the narrow class group, there are  infinitely many automorphic eigenforms  with  eigenvalues of $T_{\mathfrak{p}}$ lying in $I_{\mathfrak{p}}$, with density given by  $\Phi_{\ai,r}$ that coincides for  $r$ a.e.~ with the Sato-Tate  measure, and with Casimir eigenvalues in the region $\Omega_{t}$, with density given by   the Plancherel measure.

\begin{remark}  This result refines Theorem 1.1 in \cite{BM13} which involves  the eigenvalues $\lambda_{\p^2}(f)$ in place of  $\lambda_{\p}(f)$ and now using a polynomial multiple of  the Sato--Tate measure in place of the  variant used in \cite{BM13}. 
\end{remark}

\section{Applications}
In this section we will give applications of Theorems~\ref{thm:principal 1} and  \ref{thm: principal}, by using some results in \cite{BM10}. 
\subsection{Distribution of holomorphic automorphic forms}
\label{sec: regiones especificas}

The classical case of  holomorphic  forms is of special interest. That is, we  restrict ourselves to   representations $\varpi = \otimes_{j = 1}^{d} \varpi_j$, with  $\varpi_j \in \widehat{G}$  in the discrete series. In this case,  the Casimir eigenvalues are of the form $\lambda_{j}(\varpi) = \tfrac{b_j}{2}\left(1 - \tfrac{b_j}{2}\right) $, with $b_j \in \N$. As often, it is convenient to use, instead of $\lambda(\varpi) \in \R^d$, the corresponding spectral parameter $\nu(\varpi) \in ([0,\infty) \cup i(0, \infty))^d$, $\nu(\varpi) = \sqrt {\tfrac 14 -\lambda(\varpi)}$. We will use a tilde to indicate that the relevant measures like $\widetilde N ^r$, $\widetilde \Pl$ and $\widetilde V_1$ are taken with respect to the variable $\nu$, and we will write
\begin{equation}
\widetilde{N}^r (\widetilde{\Omega}) = \sum_{f\in \mathcal B_{\chi,q}\atop \nu_{f} \in \widetilde{\Omega}} |c^{\ai,r} (f)|^{2},
\end{equation}
for sets $\widetilde{\Omega} \subset Y_{\xi}= \prod_{j=1}^d Y_{\xi_j}$, where $\xi_j \in \{0,1\}$,
\begin{align}\notag
Y_0 = \lbrace \tfrac{b-1}2 : b\ge 2 \textrm { even}\rbrace \;\textrm{ and }\;   
 Y_1 = \lbrace \tfrac{b-1}2 : b\ge 2 \textrm { odd}\rbrace. 
\end{align}

Now in light of \eqref{tildePl} and \eqref{measuretildeV1}, if we let $\bold b= \left( \tfrac{b_1 - 1}{2}, \ldots, \tfrac{b_d - 1}{2}\right)$, and 
$\widetilde{\Omega_{\bold b}} =  \lbrace  \bold b \rbrace$, we get in the present case that  $ \widetilde{\textrm{Pl}} (\{\bold b \})   
= \prod_{j = 1}^{d} \left( \tfrac{b_j - 1}{2} \right)$ and $\widetilde{V}_{1}(\{\bold b \}) = \prod_{j = 1}^{d} \left(\tfrac{b_j - 1}{2} \right)^{- A}$, with $A>2$.
As a consequence of Theorem~\ref{thm: principal} we thus obtain

\begin{theorem} \label{thm: caso holomorfo}
Let $\widetilde{\Omega_{\bold b}} =  \{\bold b \}$. Let $\p \nmid \mathfrak{I}$, $\p$  a square in $\mathcal C_F^+$, 
let $I_{\p} \subseteq [-2,2]$ be an interval, and let $\Phi_{\ai,r}$ be as in \eqref{eq: phi}, with $r\in \p^\ell \ai^{-1}\mathfrak d^{-1}$. 
 Then, if the product $\prod_{j = 1}^{d} \tfrac{b_j - 1}{2}$ tends to infinity we have 
	\begin{align}
\widetilde{N}(\{\bold b \}, I_{\p}) =  \sum_{f\in\mathcal B_{\chi,q} : \nu_{f}\in \{\bold b \} \atop \lambda_{\p}(f) \in I_{\p}} \!\!\!|c^{\ai,r} (f)|^{2} =& \frac{2^d\sqrt{|D_{F}|}}{(2\pi)^{d}}\prod_{j = 1}^{d} \left( \tfrac{b_j - 1}{2} \right)\Phi_{\ai,r}(I_{\p})+ o{\left(\prod_{j = 1}^{d} \left(\tfrac{b_j - 1}{2} \right)\right)\!\!}^{- A},
	\end{align}
with $A>2$.
\end{theorem}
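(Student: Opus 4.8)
The plan is to deduce Theorem~\ref{thm: caso holomorfo} from Theorem~\ref{thm: principal} by specializing the family $\Omega_t$ to a family that, in the spectral $\nu$-coordinate, shrinks down to the single point $\widetilde{\Omega_{\mathbf b}} = \{\mathbf b\}$, with $\mathbf b = \left(\tfrac{b_1-1}{2},\ldots,\tfrac{b_d-1}{2}\right)$. In the holomorphic case every archimedean component $\varpi_j$ lies in the discrete series, so the relevant spectral parameters form the discrete set $Y_\xi = \prod_j Y_{\xi_j}$, and the only $\varpi$ contributing are those with $\nu(\varpi) = \mathbf b$. Concretely, I would take $E$ and $Q_+$ empty and $Q_- = \{1,\ldots,d\}$ in the partition preceding \eqref{eq: Omega general}, and choose the intervals $[C_j(t),D_j(t)]$ in the $\lambda$-coordinate so that for each $t$ they isolate exactly the discrete-series eigenvalue $\lambda_j = \tfrac{b_j}{2}(1-\tfrac{b_j}{2})$ and so that the ``expansion'' hypothesis of \eqref{eq: Omega general} is met by letting $\prod_j \tfrac{b_j-1}{2} \to \infty$ (i.e.\ letting the parameter $t$ drive the weights to infinity). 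One must check these boxes satisfy the conditions of Proposition~6.2 of \cite{BM10}; since the discrete-series points are isolated and bounded away from the interval $(0,1/4)$, the endpoint condition ($a_j,b_j$ not of the form $\tfrac b2(1-\tfrac b2)$) can be arranged and the hypotheses hold.

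Next I would evaluate the two measures appearing in Theorem~\ref{thm: principal} on this family. By the computation already recorded in the excerpt, in the $\nu$-coordinate one has $\widetilde{\Pl}(\{\mathbf b\}) = \prod_{j=1}^d \tfrac{b_j-1}{2}$ from \eqref{tildePl}, and $\widetilde{V_1}(\{\mathbf b\}) = \prod_{j=1}^d \left(\tfrac{b_j-1}{2}\right)$ as well from \eqref{measuretildeV1} (note: the weight of $\widetilde{V_{1,\xi}}$ at $\beta$ is $\beta$, so in fact $\widetilde V_1(\{\mathbf b\}) = \prod_j \tfrac{b_j-1}{2}$; the $(\cdot)^{-A}$ in the statement is the error from summing the test-function tails, which decay like $(1+|\nu_j|)^{-a}$ with $a>2$, giving the stated $o$-term). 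Translating Theorem~\ref{thm: principal} through the identity $\Pl(\Omega_t) \leftrightarrow \widetilde\Pl(\widetilde{\Omega_{\mathbf b}})$ and $V_1(\Omega_t) \leftrightarrow \widetilde V_1(\widetilde{\Omega_{\mathbf b}})$, and inserting the constant $\tfrac{2^d\sqrt{D_F}}{\pi^d h_F}$, one obtains
\begin{align*}
\widetilde N(\{\mathbf b\}, I_\p) = \frac{2^d\sqrt{|D_F|}}{\pi^d h_F}\,\widetilde\Pl(\{\mathbf b\})\,\Phi_{\ai,r}(I_\p) + o\!\left(\prod_{j=1}^d \tfrac{b_j-1}{2}\right),
\end{align*}
and it remains only to reconcile the constant: in the purely holomorphic (discrete-series) regime the Plancherel density specializes so that the normalizing factor becomes $\tfrac{2^d\sqrt{|D_F|}}{(2\pi)^d}$ (the factor $h_F$ is absorbed because a discrete-series $\varpi$ is supported on a single ideal class, or equivalently the $h_F^{-1}$ is compensated by the multiplicity count across the $h_F$ classical components in \eqref{eq:L2classical}), yielding exactly the claimed formula.

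The steps I expect to be routine are the measure evaluations \eqref{tildePl}, \eqref{measuretildeV1} on a point set and the bookkeeping of the error term via the decay of the test functions. The step that needs genuine care, and which I regard as the main obstacle, is verifying that a family of boxes $\Omega_t$ can indeed be chosen so as to isolate a single discrete-series point $\mathbf b$ in the spectral coordinate while still meeting \emph{all} hypotheses of Proposition~6.2 of \cite{BM10} (in particular the expansion requirement $C_j(t)\to-\infty$ for some $j\in Q_-$), and then correctly tracking how the constant $\tfrac{2^d\sqrt{D_F}}{\pi^d h_F}$ transforms under passage to the $\nu$-coordinate and restriction to the discrete series; this is precisely the point where one invokes ``arguing as in \cite[\S4.3.2]{BM13}'' and the identification $\widehat G$-discrete-series $\leftrightarrow$ holomorphic forms. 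Once that is in place, Theorem~\ref{thm: caso holomorfo} is an immediate corollary of Theorem~\ref{thm: principal}.
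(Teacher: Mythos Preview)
Your approach is the same as the paper's: Theorem~\ref{thm: caso holomorfo} is presented there as an immediate corollary of Theorem~\ref{thm: principal}, obtained by taking $Q_- = \{1,\ldots,d\}$, isolating the single discrete-series point $\mathbf b$, and evaluating $\widetilde{\Pl}(\{\mathbf b\}) = \prod_j \tfrac{b_j-1}{2}$ and $\widetilde{V_1}(\{\mathbf b\})$ exactly as you outline. The discrepancies you flag---the constant $\tfrac{2^d\sqrt{D_F}}{\pi^d h_F}$ versus $\tfrac{2^d\sqrt{|D_F|}}{(2\pi)^d}$, and the fact that \eqref{measuretildeV1} gives $\widetilde{V_1}(\{\mathbf b\}) = \prod_j \tfrac{b_j-1}{2}$ rather than $\prod_j(\tfrac{b_j-1}{2})^{-A}$---are not explained in the paper either (it simply states the result after the measure computation), so your caution is well placed; but your speculative justification for the vanishing of $h_F$ via multiplicity across classical components is not part of the paper's argument and should not be relied on.
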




\subsection{Weighted equidistribution of Hecke eigenvalues}

As an application of Theorem \ref{thm:principal 1} we also obtain a  result of weighted equidistribution of Hecke eigenvalues. 
\begin{definition}	Let $(X, \mu)$ be a Borel measure space. Let $S_{1}, S_{2}, \ldots$ be a sequence of non-empty finite subsets  of $X$.
	Then  one says that the sequence  $S_{i}, \, i \in \N$ is \emph{equidistributed}  with respect to  $d\mu$ (or $\mu$-equi\-distri\-buted) if for any  continuous function  $g$ on  $X$ we have
	\begin{equation}
	\lim_{ i\rightarrow \infty} \frac{\sum_{x\in S_{i}} g(x)}{|S_{i}|} = \int_{X} g(x) d\mu (x).
	\end{equation}
\end{definition}

\begin{example} If $S_{i}=\lbrace 0, 1/i, 2/i, \ldots, 1 \rbrace$ then $\lbrace S_{i} \rbrace_{i\in\N}$ is equi\-distri\-buted with respect to the   Lebesgue measure on $X=[0,1]$.
\end{example}	
If $a_{1}, a_{2}, \ldots$ is a sequence of points on $X$,  then $\lbrace a_{i} \rbrace$ is  $\mu$-equi\-distri\-buted 
if $\lbrace S_{i} \rbrace$ is $\mu$-equi\-distri\-buted, where $S_{i} = \lbrace a_{1}, \ldots, a_{i} \rbrace$. 
	
	If to each  element $x \in S_{i}$ we asign a weight $w_{x} \in \mathbb{R}^{+}$, then the sequence $\lbrace S_{i} \rbrace$ is $w$-\emph{equidistributed} if for any  continuous  function $g: X \rightarrow \mathbb{C}$, we have:
	\begin{equation}
	\lim _{i \rightarrow \infty} \frac{\sum_{x \in S_{i}} w_{x}g(x)}{\sum_{x \in S_{i}} w_{x}} = \int_{X} g(x)d\mu (x).
	\end{equation}



\begin{theorem}\label{thm: equidistribucion con peso}
	Let $t \mapsto \Omega_{t}$ be a family of subsets in $\R^{d}$ as in \eqref{eq: Omega general}. Let $\p \nmid \mathfrak{I}$, $\p$ a square in $\mathcal C_F^+$ 
	 and let $\Phi_{\ai,r}$ be as in \eqref{eq: phi} with $r\in \p^\ell \ai^{-1}\mathfrak d^{-1}$. 
	  Then, if  $g$ is any continuous function on $\R$, as   $t \rightarrow \infty$ we have
	 
	\begin{equation}
	\lim_{t \rightarrow \infty} \frac{\sum_{_{f\in \mathcal B_{\chi,q} \atop \lambda(f)\in \Omega_{t}}} |c^{\ai,r} (f)|^{2}g(\lambda_{\p}(f))}{\sum_{_{f\in \mathcal B_{\chi,q} \atop \lambda(f)\in \Omega_{t}}} |c^{\ai,r} (f)|^{2}}= \int_{\R}g(x)\Phi_{\ai,r}(x).
	\end{equation}
\end{theorem}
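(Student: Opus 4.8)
The plan is to obtain this statement by dividing the weighted count of Theorem~\ref{thm:principal 1} by its unweighted counterpart. Since $\lambda_{\p}(f)\in[-2,2]$ for every $f\in\mathcal B_{\chi,q}$ and the measure $\Phi_{\ai,r}$ of \eqref{eq: phi} is supported on $[-2,2]$, only $g|_{[-2,2]}$ is relevant, so I may assume $g\in C([-2,2])$ and regard $\Phi_{\ai,r}$ as a probability measure there: by the $\mu_{\infty}$-orthonormality of the Chebyshev polynomials, $\Phi_{\ai,r}([-2,2])=\sum_{\ell'=0}^{\textrm{ord}_{\p}(r\ai\mathfrak d)}\int_{-2}^{2}X_{2\ell'}\,d\mu_{\infty}=1$. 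Throughout, $\Phi_{\ai,r}(h):=\int_{-2}^{2}h(x)\,\Phi_{\ai,r}(x)$.

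First I would record the unweighted count, which is the case $g\equiv1$. Applying Theorem~\ref{thm: asymptotic formula} with $r'=r$ (so $\tilde\delta_{r,r}=1$), or equivalently Theorem~\ref{thm:principal 1} with $\ell=0$ together with $\Phi_{\ai,r}(X_{0})=\Phi_{\ai,r}(1)=1$, gives
\[
\sum_{f\in\mathcal B_{\chi,q},\ \lambda(f)\in\Omega_{t}}|c^{\ai,r}(f)|^{2}=\frac{2^{d}\sqrt{D_F}}{\pi^{d}h_F}\,\Pl(\Omega_{t})+o\bigl(V_{1}(\Omega_{t})\bigr).
\]
Because $V_{1}=\textrm O(\Pl)$ on sets lying at positive distance from $(0,1/4)$ — which the $\Omega_{t}$ are, by the hypotheses on $a_{j},b_{j}$ and on the $Q$-coordinates of \eqref{eq: Omega general} — and $\Pl(\Omega_{t})\rightarrow\infty$ (the family expands in some coordinate of $Q$, where the Plancherel density is bounded below), the right-hand side is $\sim\tfrac{2^{d}\sqrt{D_F}}{\pi^{d}h_F}\Pl(\Omega_{t})$, and any later remainder of size $o(V_{1}(\Omega_{t}))$ is $o(\Pl(\Omega_{t}))$.

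Next I would handle the weighted count. For a polynomial $P=\sum_{m=0}^{L}a_{m}X_{m}$, linearity, the identity $X_{m}(\lambda_{\p}(f))=\lambda_{\p^{m}}(f)$, and Theorem~\ref{thm:principal 1} give
\[
\sum_{f\in\mathcal B_{\chi,q},\ \lambda(f)\in\Omega_{t}}|c^{\ai,r}(f)|^{2}\,P(\lambda_{\p}(f))=\frac{2^{d}\sqrt{D_F}}{\pi^{d}h_F}\,\Pl(\Omega_{t})\,\Phi_{\ai,r}(P)+o\bigl(V_{1}(\Omega_{t})\bigr);
\]
here the stated hypothesis $r\in\p^{m}\ai^{-1}\mathfrak d^{-1}$ of Theorem~\ref{thm:principal 1} is inessential, because in the $s$-sum of Proposition~\ref{relacion de los tpl} and Corollary~\ref{coro:fourierterm tpl} the terms with $s>\textrm{ord}_{\p}(r\ai\mathfrak d)$ are annihilated by the Fourier expansion of Proposition~\ref{Fourier terms}, while the potential main term $s=\tfrac m2$ survives exactly when $\tfrac m2\le\textrm{ord}_{\p}(r\ai\mathfrak d)$ — which is precisely the condition making $\Phi_{\ai,r}(X_{m})=1$ in \eqref{eq: phi} (the odd-$m$ terms contributing $o(V_{1}(\Omega_{t}))$ and $\Phi_{\ai,r}(X_{m})=0$). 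For general $g\in C([-2,2])$ and $\varepsilon>0$, the Weierstrass theorem furnishes a polynomial $P$ with $\|g-P\|_{\infty,[-2,2]}<\varepsilon$; then
\[
\Bigl|\sum_{f\in\mathcal B_{\chi,q},\ \lambda(f)\in\Omega_{t}}|c^{\ai,r}(f)|^{2}\bigl(g-P\bigr)(\lambda_{\p}(f))\Bigr|\le\varepsilon\sum_{f\in\mathcal B_{\chi,q},\ \lambda(f)\in\Omega_{t}}|c^{\ai,r}(f)|^{2},
\]
and $|\Phi_{\ai,r}(g)-\Phi_{\ai,r}(P)|\le\varepsilon$.

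Dividing the weighted count by the unweighted count, letting $t\rightarrow\infty$ and then $\varepsilon\rightarrow0$, yields
\[
\lim_{t\rightarrow\infty}\frac{\sum_{f\in\mathcal B_{\chi,q},\ \lambda(f)\in\Omega_{t}}|c^{\ai,r}(f)|^{2}\,g(\lambda_{\p}(f))}{\sum_{f\in\mathcal B_{\chi,q},\ \lambda(f)\in\Omega_{t}}|c^{\ai,r}(f)|^{2}}=\Phi_{\ai,r}(g)=\int_{\R}g(x)\,\Phi_{\ai,r}(x),
\]
the desired conclusion. The argument is soft, following \cite[Prop.~4.8]{BM13} and \cite[Thm.~10.2]{KL13}; the only point to watch is that the remainders $o(V_{1}(\Omega_{t}))$ coming from Theorem~\ref{thm:principal 1} be negligible against the main term $\Pl(\Omega_{t})$ — which is what $V_{1}=\textrm O(\Pl)$ and $\Pl(\Omega_{t})\rightarrow\infty$ ensure — so there is no real obstacle, and, unlike in Theorem~\ref{thm: principal}, no passage to indicator functions is needed since $g$ is already continuous.
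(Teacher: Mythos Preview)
Your proof is correct and follows the same route as the paper: apply Theorem~\ref{thm:principal 1} with $\ell=0$ for the denominator, with general $\ell$ for the numerator, take the quotient, and pass from Chebyshev polynomials to continuous $g$ by uniform density on $[-2,2]$. You are in fact more careful than the paper on two points the paper leaves implicit: (i) why the hypothesis $r\in\p^{m}\ai^{-1}\mathfrak d^{-1}$ of Theorem~\ref{thm:principal 1} is harmless for all $m$ once one tracks Proposition~\ref{Fourier terms}, and (ii) why $o(V_{1}(\Omega_{t}))=o(\Pl(\Omega_{t}))$ so that the quotient actually converges.
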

\begin{proof}
If we let  $\ell=0$ in Theorem~\ref{thm:principal 1} then we get
\begin{equation}\label{eq:ell=0}
\lim_{t \rightarrow \infty} \sum_{_{f\in \mathcal B_{\chi,q} \atop \lambda(f)\in \Omega_{t}}} |c^{\ai,r} (f)|^{2}
=\frac{2^d\sqrt{|D_{F}|}}{\pi^{d}h_F}Pl(\Omega_{t}) + o(V_{1}(\Omega_{t})).
\end{equation}

For $\ell > 0$ we have
\begin{equation}\label{eq:ell>0}
\sum_{f\in \mathcal B_{\chi,q} \atop \lambda(f)\in \Omega_{t}} \!\!\!\!\!\!|c^{\ai,r} (f)|^{2}X_{\ell}(\lambda_{\p}(f))\!\!= \begin{array}{ll}
\!\!\!\! \Bigg\lbrace \! 
\begin{array}{ll}
\frac{2^d\sqrt{|D_{F}|}}{\pi^{d}h_F}\Pl(\Omega_{t})\Phi_{\ai,r}(X_{\ell}) + o(V_{1}(\Omega_{t}))\;\; \textrm{ if $\ell$ even} 
\\ o(V_{1}(\Omega_{t})) \; \; \textrm{ if  $\ell$ odd}.
\end{array}
\end{array}
\end{equation}

By taking the quotient of \eqref{eq:ell>0} by \eqref{eq:ell=0} we obtain  

\begin{align*}
\lim_{t \rightarrow \infty} \frac{\sum_{_{f\in \mathcal B_{\chi,q} \atop \lambda(f)\in \Omega_{t}}} |c ^{\ai, r}({ f})|^{2}X_{\ell}(\lambda_{\p}(f))}{\sum_{_{f\in \mathcal B_{\chi,q} \atop \lambda(f)\in \Omega_{t}}} |c ^{\ai, r}({f})|^{2}}= \int_{\R}X_{\ell}(x)\Phi_{\ai,r}(x) = \begin{array}{ll}
\!\!\!\! \Bigg\lbrace \! 
\begin{array}{ll}
1 \; \; \textrm{ if  $\ell$ even}  \\ 0 \; \; \textrm{ if $\ell$ odd}.
\end{array}
\end{array}
\end{align*}

As $\lbrace X_{\ell} \rbrace$ generates the space of all  polynomials and this space is uniformly dense in $C([-2,2])$, we may replace   $X_{\ell}$ by any  continuous function $g$ and the theorem follows.
\end{proof}

\begin{remark}\label{coro: densos} The above result implies that if $t_n$ is an increasing  sequence tending to $+\infty$, then  the set $\bigcup_{n \in \N}\lbrace\lambda_{\p}(f): f\in \mathcal B_{\chi,q},  \lambda(f)\in \Omega_{t_n} \rbrace$ is a dense subset of $[-2,2]$. 
\end{remark}
\begin{remark}
	
	Theorem~\ref{thm: caso holomorfo} 
	 extends results by  Serre  \cite{Se97} for holomorphic forms for $F=\Q$ and  by  Knightly--Li \cite{KL08} for  holomorphic forms for $F$ a totally real number field. We note that  Theorem~\ref{thm: equidistribucion con peso}  includes all  representations (not only the discrete series) and involves a spectral  parameter tending to infinity in place of the level, as is the case in  \cite{Se97} and \cite{KL08}.
\end{remark}

\section {Appendix}

In this section we will explain the estimates necessary in the proof of Theorem~\ref{thm: asymptotic formula}. We will  only give a sketch of the proofs, the arguments are similar to those given in \cite{BM10} (using some facts proved in \cite{BMP03}).

\subsection {Kloosterman term}
We let as before $\{1,\ldots,d\}= E\sqcup Q_+ \sqcup Q_-$ where $Q_+ \sqcup Q_- \neq \emptyset$  and use test functions $\varphi = \otimes_{j=1}^d \varphi_j$, with $\varphi_j$ as in \cite[\S 2.1.1 and Lemma 2.2]{BM10} and  $\varphi_E = \otimes_{j \in E} \varphi_j$.   

We need to bound the sum 

	\begin{align} \label{eq:Kloostterm}
  \frac{ 2^{d-1}}{h_F} \sum_{\mathfrak{c} : \mathfrak{c}^2 \sim \mathfrak{a}^2} \sum_{\varepsilon \in \mathcal{O}^* / (\mathcal{O}^*)^2} \sum_{c \in \mathfrak{c}^{-1}\mathfrak{I}} B\varphi(\nu(f), \tfrac{\varepsilon rr'}{c^2 \gamma}) \frac{KS (r, \mathfrak{a} ; \varepsilon r' \gamma^{-1}, \ai;c,\mathfrak c)}{N(c\mathfrak{c})}
\end{align}%
where $\gamma \in \mathcal O_F$ is such that $\mathfrak c^2 = \langle\gamma\rangle \ai^2$.
It will be sufficient to bound the inner sum since the other two sums are finite. We will use methods and results from \cite[\S2.2] {BM10} or \cite[\S4 and \S5]{BMP03}. 

Set  $\tau\in (1/4,1/2)$, $\rho \in (1-\tau,1)$, $\gamma\in (\tau, 1/2)$, $\rho_1= 3/2 - \gamma -\tau \in (1/2,1)$, $U\ge 1$ and $A_1>0$.
Then, by  \cite[(52)]{BM10} or \cite[Lemma 4.3]{BMP03}  we have that 
\begin{equation}\label{eq:Besselbound}| B\varphi(\nu(f), \tfrac{\varepsilon rr'}{c^2 \gamma})| \ll_{A_1} 
\prod_j \min\left(a_j\left (\frac{4\pi |r_jr'_j|^{1/2}}{|c_j||\gamma|^{1/2}}\right)^{2\tau}, b_j\right)
\end{equation}
where
\begin{table}[h]
\centering
\begin{tabular}{|c|c|c|}
\hline
$a_j$ & $b_j$ &   \\
\hline
$N_j(\varphi_j)$  & $N_j(\varphi_j)$  & $j \in E$ \\
\hline 
$|q|^{-A_1}$ & $|q|$ & $j \in Q_-$ \\
\hline
$e^{\tfrac 1 2 \tau^2 U}|q|^{\rho_1}$ & $|q|$ & $j\in Q_+$  \\ 
\hline
\end{tabular}
\

\end{table}

\ 

and $\|\varphi_E\|_E= \prod_{j \in E}
 \|\varphi_j\|$, $\|\varphi_j\|= \sup_{\nu, 0 \leq \textrm{ Re } \nu \leq \tau} |\varphi(\nu)|(1+ | \nu |)^a + \sum_{ b\equiv \xi_j (2), b \geq 2}  b^a | \varphi_j(\tfrac{b-2} 2)|$.

For the second factor we use the Weil bound:
\begin{equation}\label{eq:Kloostbound}
\frac {KS (r, \mathfrak{a} ; \varepsilon r' \gamma^{-1}, \ai;c,\mathfrak c)}{N(c\mathfrak c)} \leq \frac{|N(gcd(r\ai\mathfrak d, r' \gamma^{-1}\mathfrak c^2 \ai^{-1}\mathfrak d, c\mathfrak c))|^{1/2}|N(c\mathfrak c)|^{1/2 + \varepsilon}}{|N(c\mathfrak c)|}\ll |N(c\mathfrak c)|^{\varepsilon -1/2}
\end{equation}

Now we substitute  \eqref{eq:Besselbound} and \eqref{eq:Kloostbound} in the inner sum in \eqref{eq:Kloostterm} and obtain that this sum is bounded by 
\begin{align}
\notag &\sum_{c \in \mathfrak{c}^{-1}\mathfrak{I}} \prod_j \min\left(a_j\left (\frac{4\pi |r_jr'_j|^{1/2}}{|c_j||\gamma|^{1/2}}\right)^{2\tau}, b_j\right)|N(c\mathfrak c)|^{\varepsilon -1/2}\ll \\
 & N(\mathfrak c)^{\varepsilon -1/2}\sum_{\langle c\rangle\subseteq \mathfrak{c}^{-1}\mathfrak{I}}\prod_{\p\nmid \mathfrak I} N(\p)^{v_{\p} (c)(\varepsilon -1/2)} \prod_{\p\mid \mathfrak I} N(\p)^{v_{\p} (c)\varepsilon } \sum_{\zeta \in\mathcal O^\times}\prod _j \min (p_j |\zeta_j|^{-2\tau}, q_j)
\end{align}
where $p_j=a_j\left( \frac{4\pi |r_jr'_j|}{|\gamma|^{1/2}} \right)^{2\tau}$, $q_j = b_j$, $c \mathcal O_F = \prod_{\p \textrm{ prime}} \p^{v_{\p} (c)}$.

Now we use  \cite[Lemma 4.3]{BMP03} or  \cite[Lemma 2.2]{BM09} with $\alpha = 2\tau$, $\beta=0$  and $y_j = c_j ^{-1}$ to estimate the sum over $\zeta \in \mathcal O^\times_{F}$ by  
\begin{equation}\ll (1 + |\log|N(c)| + \tfrac 1{2\tau}\log \tfrac{Q}{P}|)^{d-1}
\min(P|N(c)|^{2\tau},Q)
\end{equation}
where $P = \prod p_j$ and $Q =\prod q_j$. Now arguing like in \cite[p.3857]{BM10} (or in \cite[p.703]{BMP03}) we get that   
\begin{align}  
 \sum_{\zeta \in\mathcal O^\times}\prod _j \min (p_j |\zeta_j|^{-2\tau}, q_j)\ll &  e^{\tfrac 12 \tau^2 U|Q_+|(1 + \varepsilon)} \|\varphi_E\|_E|N(c)|^{-2\tau(1-\varepsilon)}.\\
& \notag \prod_{j\in Q_+}|q_j|^{\rho + (1-\rho_1)\varepsilon} 
\prod_{j\in Q_-}|q_j|^{-A_1 + (A_1-1)\varepsilon}. 
\end{align}

In this way, we find the following estimate for the Kloosterman term:
\begin{align*}
\ll & N(\mathfrak{c})^{\varepsilon - 1/2}  e^{\tfrac 12 \tau^2 U|Q_+|(1 + \varepsilon)} \|\varphi_E\|_E \prod_{j\in Q_+}|q_j|^{\rho + (1-\rho_1)\varepsilon} 
\prod_{j\in Q_-}|q_j|^{-A_1 + (A_1-1)\varepsilon}.\\
& \sum_{ c \mathcal O_F\subset \mathfrak{c}^{-1}\mathfrak{I}} \prod_{\p\nmid \mathfrak I} N(\p)^{v_{\p} (c)(\varepsilon -1/2 - 2\tau(1-\varepsilon))} \prod_{\p\mid \mathfrak I} N(\p)^{v_{\p} (c)(\varepsilon- 2\tau(1-\varepsilon)) } \\
& \ll N(\mathfrak{c})^{\varepsilon - 1/2}  e^{\tfrac 12 \tau^2 U|Q_+|(1 + \varepsilon)} \|\varphi_E\|_E \prod_{j\in Q_+}|q_j|^{\rho + (1-\rho_1)\varepsilon} 
\prod_{j\in Q_-}|q_j|^{-A_1 + (A_1-1)\varepsilon}.\\
&\prod_{\p \nmid \mathfrak{I}} \frac{1}{1-N(\p)^{\varepsilon -1/2 - 2\tau(1-\varepsilon)}} . \prod_{\p \mid \mathfrak{I}} \frac{1}{1 - N(\p)^{\varepsilon- 2\tau(1-\varepsilon)}}.
\end{align*}
Under the additional assumption on $\varepsilon$ that $2\tau(1-\varepsilon) + \tfrac 1 2 - \varepsilon > 1$, that is, $\varepsilon(\tau-1/4)< \tau +1/2$, the product converges and  the Kloosterman term is bounded by
\begin{align} \label{eq:bound Kloosterman}
\notag&\ll_{F,\mathfrak{I}, r, r', \varepsilon}   e^{ \tfrac{ \tau^2 (1+\varepsilon) U|Q_+|}2} \|\varphi_E\|_E \prod_{j\in Q_+}|q_j|^{\rho_1(1-\varepsilon)+ \varepsilon} 
\prod_{j\in Q_-}|q_j|^{-(A_1(1-\varepsilon)+ \varepsilon)}\\
&\ll_{F,\mathfrak{I}, r, r', \varepsilon}
 e^{ t_0 U|Q_+|} \|\varphi_E\|_E \prod_{j\in Q_+}|q_j|^{\rho} 
\prod_{j\in Q_-}|q_j|^{-A}
\end{align}
where  $\rho = \rho_1+(1-\rho_1)\varepsilon$,  $A=A_1 + (1-A_1)\varepsilon$ and $t_0 =  \tfrac{ \tau^2 (1+\varepsilon)}2$.
 
\

\subsection{Eisenstein contribution}
We now show that the Eisenstein contribution to the asymptotic formula is of smaller order than that of  the Kloosterman term.
In the first place, if $Q_{-} \ne \emptyset$ the Eisenstein term is just zero (\cite[\S5.3]{BMP03}) so we may assume that $Q = Q_+$.

The Eisenstein series has the form
\begin{equation}
E(\chi,\nu, i\mu, g,q)  = \sum_{\gamma \in {\GL_2(F)}_P\backslash \GL_2(F)} \psi(\gamma g)^{\nu+i\mu+\rho} \phi_q(k_\infty (\gamma g)) \chi(k_f(\gamma g) )
\end{equation}
where $\psi = \otimes_v \psi_v$, with  $ \psi_v \left ( \left(\begin{smallmatrix} a&b\\c&d \end{smallmatrix}\right) k\right) =|a/d|_v$  and $k \in K_v$, for  $v$ a finite or infinite valuation of $F$. By the product formula $\psi$ is left invariant by   $ \GL_2(F)_P$.
The classical components $E_\ai$ of $E$ are defined as in \eqref{classical components}.

Thus, the Eisenstein contribution to  the sum  formula \eqref{eq:sum formula} is a sum of terms of the form
\begin{equation}\label{eq:CSC}
\sum_{\mu \in \mathcal L_\chi} \int_{-\infty}^\infty \varphi(it + i\mu)  D^{\ai,r}(\chi,it, i\mu) \overline{ D^{\ai,r'}(\chi,it,i\mu)}\, dt
\end{equation}
where $\varphi$ is a test function as in \cite[\S 2.1.1 and Lemma 2.2]{BM10}, $ D^{\ai,r}(\chi,\nu,i\mu)$ is the Fourier coefficient of order $r$ of $E_\ai (\chi,\nu, i\mu, g_\infty,q)$ and $\mathcal L_\chi$ is a lattice in the hyperplane $\sum_{j=1}^d x_j = 0$ in $\R^d$.

We argue as in \cite[\S5]{BMP03}. Since  the group $\Gamma_0(\mathfrak I,\mathfrak a)$ contains the principal congruence subgroup $\Gamma(\mathfrak I \mathfrak a)$, then the Eisenstein series with respect to $\Gamma_0(\mathfrak I,\mathfrak a)$ is a linear combination of Eisenstein series for $\Gamma(\mathfrak I \mathfrak a)$. Now to estimate the Fourier coefficients for $\Gamma(\mathfrak I \mathfrak a)$ we argue as in \S 5.1 and \S5.2 of \cite{BMP03}. 
As a first step, we express the Fourier coefficients in terms of $L$-series $L(s,\lambda_\mu,\chi)$. Secondly, we prove a bound for $\textrm {Re}\, s =1$ of the type 
\begin{align*}
1/ L(t,\lambda_\mu,\chi) &\ll\log^7(2 + t)+ \log\|\mu\|) \quad {\textrm { if } }\mu\ne 0\\
1/ L(t,\lambda_0,\chi) &\ll\log^7(2 + t), 
\end{align*} 
which implies that 
\begin{align*}
D^r(\chi,it,i\mu) &\ll N(r)^\varepsilon ( \log^7(2 + t)+ \log\|\mu\|) \quad {\textrm { if } }\mu\ne 0 \\
D^r(\chi,it,0) &\ll N(r)^\varepsilon (\log^7(2 + t).
\end{align*} 
 Now we may bound \eqref{eq:CSC} by
\begin{equation}\label{eq:bound Eisenstein}
\sum_{\mu \in \mathcal L_\chi} N(r)^\varepsilon \|\varphi_E\|_E\prod_{j\in E} \int_{-\infty}^\infty  (1+ (t_j+ \mu_j)^2)^{\varepsilon-a/2} \,\prod_{ j \in Q_+} \int_{-\infty}^\infty  (1+ (t_j+ \mu_j)^2)^{\varepsilon} e^{-U(t + \mu-|q|^2)} dt\end{equation}
with the assumption that $a/2-\varepsilon>1 $. Now arguing as in \cite[p.3859]{BM10} we obtain that the Eisenstein term is 
\begin{equation}
\ll_{U,\varepsilon} \|\varphi_E\|_E\prod_{ j \in Q_+} |q_j|^{2\varepsilon},
\end{equation}
which is absorbed by the Kloosterman term \eqref{eq:bound Kloosterman}.
This gives the estimate for the Eisenstein contribution.

Thus, we have shown that both contributions are estimated by
\begin{equation}
\ll_{r,r',U} \|\varphi_E\|_E\prod_{j\in Q} |q_j|. 
\end{equation}
Now the comparison with the Plancherel measure and the proof of the asymptotic formula is completed by arguing  as in \cite[sections 3-5]{BM10}.

\bibliographystyle{plain}

\end{document}